\newcommand{\R}{\ensuremath{\mathbb{R}}}
\newtheorem{theorem}{Theorem}[section]
\newtheorem*{theorem*}{Theorem}
\newtheorem*{prop*}{Proposition}
\newtheorem{prop}[theorem]{Proposition}
\newtheorem{remark}[theorem]{Remark}
\newtheorem{lemma}[theorem]{Lemma}
\newtheorem{defn}[theorem]{Definition}
\newtheorem{exmp}[theorem]{Example}
\newtheorem{corollary}[theorem]{Corollary}
\newtheorem{labelledcorollary}[theorem]{Corollary}
\DeclareMathOperator\arctanh{arctanh}
\DeclareMathOperator{\csch}{csch}
\DeclareMathOperator{\loc}{loc}
\title{\vspace{-1.0cm}Existence and Uniqueness for the Non-Compact Yamabe Problem  of Negative Curvature Type}
\author{Joseph Hogg~ and Luc Nguyen~\footnote{Mathematical Institute and St Edmund Hall, University of Oxford, Andrew Wiles
		Building, Radcliffe Observatory Quarter, Woodstock Road, Oxford OX2 6GG, UK. Email:
		luc.nguyen@maths.ox.ac.uk}}
\date{}
\begin{document}
    \maketitle
    \vspace{-1.0cm}
    \begin{abstract}
        We study existence and uniqueness results for the Yamabe problem on non-compact manifolds of negative curvature type. Our first existence and uniqueness result concerns those such manifolds which are asymptotically locally hyperbolic. In this context, our result requires only a partial $C^2$ decay of the metric, namely the full decay of the metric in $C^1$ and the decay of the scalar curvature. In particular, no decay of the Ricci curvature is assumed. In our second result we establish that a local volume ratio condition, when combined with negativity of the scalar curvature at infinity, is sufficient for existence of a solution. Our volume ratio condition appears tight. This paper is based on the DPhil thesis of the first author.
    \end{abstract}
    \vspace{-0.5cm}
    \tableofcontents   
    \section{Introduction}

    We are interested in the Yamabe problem on non-compact manifolds: given some complete non-compact Riemannian manifold $(M, g)$ of dimension $n\geq 3$, does there exist a corresponding complete conformal metric whose scalar curvature is constant? Equivalently, we would like to find a complete metric $\tilde{g} = u^\frac{4}{n-2}g$ where $u$ is some strictly positive smooth function on $M$ solving the Yamabe equation
    \begin{equation*}\label{scalchange}
    	S_{\tilde{g}} = u^{-\frac{n+2}{n-2}}\left(-c_n\Delta_g u + S_g u \right) \equiv \text{constant,} \qquad \text{ $c_n := \frac{4(n-1)}{n-2}$}.
    \end{equation*}
    Here, $S_g$ and $S_{\tilde{g}}$ refer to the scalar curvatures of the corresponding metrics. The operator $-c_n\Delta_g +S_g$ is known as the conformal Laplacian.
    
    In the case that $M$ is compact, the Yamabe problem has been extensively studied. The existence of a solution was established through the combined works of Yamabe \cite{MR125546}, Trudinger \cite{MR240748}, Aubin \cite{ MR431287} and Schoen \cite{MR788292}. For other aspects of the Yamabe problem, see e.g. \cite{MR2425176, MR2472174, MR982351,  MR1679784, MR544879, MR634248, MR2477893,  MR1666838,  MR2057491, MR2164927, MR2309836, MR2393072, MR4436213} and references therein.
    
    Our work is focused on the Yamabe problem of ``negative curvature type" on non-compact manifolds, namely we are interested in obtaining conformal changes to constant negative scalar curvature. Consequently, asserting that $S_{\tilde{g}} \equiv -n(n-1)$, the equation for the conformal change in scalar curvature yields the Yamabe equation
    \begin{equation}\label{yaeqintro}
    -c_n\Delta_g u + S_g u = -n(n-1)u^\frac{n+2}{n-2} \ . \tag{Ya}
    \end{equation}
    Finding a solution to the Yamabe problem thus amounts to finding a positive solution of the equation \eqref{yaeqintro} above for which the corresponding conformal metric is complete.
    
    The Yamabe problem of negative curvature type on non-compact manifolds has been studied extensively in the literature. Important progress has been made by Andersson, Chru\'{s}ciel and Friedrich \cite{MR1186044}, Gover and Waldron \cite{MR3668619}, Graham \cite{grahampaper} and Loewner and Nirenberg \cite{MR0358078}. For further literature see e.g. \cite{MR660747,MR816672, MR932852,  aviles1988,MR2413198,MR1641725, MR1760721,  MR4096721, han2019singular,  MR4269604, MR1032773, 2005Labutin,MR0358078, MR1142715, MR1425579, MR1465899, MR931204,  MR626622} and references therein.

    In the present work we consider conditions for existence of a solution to the Yamabe problem on a given non-compact Riemannian manifold $(M, g)$ where $g$ has asymptotically negative scalar curvature. We assume throughout the paper that $g$ is complete and satisfies a condition of the type
    \begin{equation}\label{asymneg}
    	\limsup S_{g} \leq -\varepsilon < 0
    \end{equation}
    for some $\varepsilon>0$ and where the limit is taken along any divergent sequence in the manifold.
    
   It is known that \eqref{asymneg} is insufficient to conclude that the Yamabe problem can be solved. For example, in \cite{aviles1988}, alongside a number of existence results, Aviles and McOwen give an example of a complete metric $g$ on $\R\times \mathbb{T}^{n-1}$ satisfying \eqref{asymneg} for which the Yamabe problem has no solution. It is therefore of interest to understand what conditions, in addition to \eqref{asymneg}, are necessary and/or sufficient for the Yamabe problem to have a solution. 
    
    Our first result concerns asymptotically locally hyperbolic (ALH) manifolds, a well-studied class of manifolds satisfying \eqref{asymneg}, in a weaker sense than considered in the existing literature (e.g. \cite{MR1186044} and \cite{MR3761652}). In particular, our notion of ALH requires only $C^1$ decay in the metric components to those of the model space and a bound on the scalar curvature of the form
    \begin{align}
    S_g \leq -n(n-1) + C&e^{-\alpha r} \qquad \text{ or }\label{introupperdecay}\\	
	|S_g + n(n-1)| \leq C&e^{-\alpha r}\label{introfulldecay}
    \end{align}
    for $\alpha \in (0,n)$ and some constant $C>0$, without requiring decay of the full curvature tensor. For the precise definition, see Section \ref{defs}. We establish:
    \begin{theorem}\label{thmA}
        Suppose $(M, g)$ is an ALH manifold of regularity order $1$ and with decay exponent $\alpha \in (0,n)$. If the scalar curvature satisfies \eqref{introupperdecay} on $M$, then there exists a positive smooth solution $u$ to \eqref{yaeqintro} on $M$ satisfying $\liminf_{r\rightarrow \infty} u \geq 1$ corresponding to a complete conformal metric $\tilde{g}$ with $S_{\tilde{g}} \equiv -n(n-1)$. 
        
        If the scalar curvature satisfies the stronger condition \eqref{introfulldecay} on $M$, then the solution $u$ above is the unique solution satisfying $\lim_{r\rightarrow \infty} u = 1$, the corresponding conformal manifold $(M, \tilde{g})$ is ALH of regularity order $0$ and with decay exponent $\alpha$, and $u$ is maximal among all solutions to \eqref{yaeqintro}, that is any other solution $\tilde{u}$ satisfies $\tilde{u} \leq u$.
    \end{theorem}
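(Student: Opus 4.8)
The plan is to construct the solution $u$ by the method of sub- and supersolutions on exhausting domains, combined with a careful asymptotic analysis near infinity. First I would fix a reference background: since $(M,g)$ is ALH, near infinity $M$ looks like a warped-product model $\mathbb{H}$-like end, and on the model one has the explicit solution $u\equiv 1$ producing $S\equiv -n(n-1)$. The key point enabled by \eqref{introupperdecay} is that $u_+ \equiv 1$ (or, where $S_g$ is not yet small, a slightly rescaled constant) is a \emph{supersolution} of \eqref{yaeqintro} on the region where $S_g \le -n(n-1)$: indeed $-c_n\Delta_g 1 + S_g\cdot 1 = S_g \le -n(n-1) = -n(n-1)\cdot 1^{(n+2)/(n-2)}$. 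On a compact core, where $S_g$ may be large positive, I would instead build a large constant supersolution or use the positivity of the Yamabe invariant-type argument; more robustly, one takes a supersolution of the form $u_+ = 1 + (\text{decaying barrier})$ globally, where the barrier absorbs the error $S_g + n(n-1) \le Ce^{-\alpha r}$. The condition $\alpha \in (0,n)$ enters precisely because $e^{-\alpha r}$-type functions can be used as barriers for the conformal Laplacian on a hyperbolic-type end only when $\alpha$ is below the indicial threshold $n$ (the relevant ODE $-c_n(f'' + (n-1)f') + \dots$ has indicial roots at $0$ and $n$), so one can solve the linearized barrier inequality with the right sign.

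Next I would produce a subsolution. A natural choice is a small constant $u_- = \delta > 0$ on a large compact set and then a decaying-to-zero or matched barrier near infinity; one checks $-c_n\Delta_g \delta + S_g \delta = S_g\delta \ge -n(n-1)\delta^{(n+2)/(n-2)}$ holds for $\delta$ small since the left side is bounded below (using completeness and \eqref{asymneg} to control $S_g$ from below on compacta and near infinity) while the right side is $O(\delta^{(n+2)/(n-2)})$, which is lower order as $\delta\to 0$. With $u_- \le u_+$ arranged, the standard monotone iteration / Perron method on an exhaustion $\Omega_1 \Subset \Omega_2 \Subset \cdots$ with Dirichlet data $u_+$ on $\partial\Omega_k$ yields solutions $u_k$ with $u_- \le u_k \le u_+$; elliptic estimates give local $C^{2,\beta}$ convergence along a subsequence to a solution $u$ on all of $M$ with $0 < u_- \le u \le u_+$. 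Since $u_+ \to 1$ at infinity (as the barrier decays) we get $\limsup u \le 1$; the matching lower barrier, or a Harnack/maximum-principle comparison against the model solution $1 - Ce^{-\alpha r}$, gives $\liminf_{r\to\infty} u \ge 1$. Completeness of $\tilde g = u^{4/(n-2)}g$ then follows because $u$ is bounded below by a positive constant on every compact set and $\liminf u \ge 1 > 0$ at infinity, so $\tilde g \ge c\, g$ for some $c>0$ and completeness of $g$ passes to $\tilde g$.

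For the second half, under the two-sided decay \eqref{introfulldecay}, I would upgrade $\limsup u \le 1$ to $\lim_{r\to\infty} u = 1$ by squeezing $u$ between two barriers $1 \pm C' e^{-\alpha r}$ near infinity (the lower barrier now being genuinely available because $S_g + n(n-1) \ge -Ce^{-\alpha r}$ makes $1 - C'e^{-\alpha r}$ a subsolution there for suitable $C'$, again using $\alpha < n$); a maximum principle comparison on the end, with the already-known behaviour $u \to 1$ controlling boundary terms, pins down the rate $|u - 1| \le C e^{-\alpha r}$ and hence, via Schauder estimates and the $C^1$ decay of $g$, that $\tilde g$ is ALH of regularity order $0$ with decay exponent $\alpha$. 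Uniqueness among solutions with $\lim_{r\to\infty}u = 1$, and maximality of $u$, both follow from a single comparison principle for \eqref{yaeqintro}: if $\tilde u$ is any solution and $u$ is the constructed one, consider where $\tilde u > u$; on that (necessarily relatively compact, by the asymptotics) set the difference satisfies a linear elliptic inequality with zeroth-order coefficient of the right sign because $t \mapsto -n(n-1)t^{(n+2)/(n-2)} - S_g t$ has the monotonicity needed (the map $t\mapsto t^{(n+2)/(n-2)}$ is convex/superlinear), so the maximum principle forces $\tilde u \le u$ throughout; two solutions both tending to $1$ then satisfy $\tilde u \le u$ and $u \le \tilde u$.

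The main obstacle I expect is the construction and verification of the global barriers under only $C^1$ decay of the metric: with no control on Ricci or the full curvature, $\Delta_g$ applied to a radial function $e^{-\alpha r}$ produces error terms involving first derivatives of the metric (through the volume form and the Laplacian's first-order coefficients) that must be shown to be lower order than the good $e^{-\alpha r}$ term coming from the indicial operator, and one must handle the transition region between the compact core (where $S_g$ can be large and positive, so $1$ is not a supersolution) and the asymptotic end. Getting the barrier inequalities to close with the sharp range $\alpha \in (0,n)$ — rather than some smaller range forced by crude estimates — is where the delicate work lies, and is presumably why the ALH definition in Section \ref{defs} is set up exactly as it is.
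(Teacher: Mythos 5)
Your outline of the existence part is broadly aligned with the paper's: a subsolution of the form $1 - Ce^{-\alpha r}$ near infinity (with the indicial computation $\alpha^2-(n-1)\alpha-n<0$ for $\alpha\in(0,n)$, exactly as in Lemma \ref{subsollemma}) yields a solution with $\liminf u\geq 1$ and a complete conformal metric. But there are two genuine gaps. The more serious one concerns maximality and uniqueness. Your comparison argument begins ``consider where $\tilde u>u$; on that (necessarily relatively compact, by the asymptotics) set\ldots'', but for an \emph{arbitrary} solution $\tilde u$ of \eqref{yaeqintro} no asymptotics are known a priori: nothing rules out $\tilde u$ being unbounded or staying above $1$ along a divergent sequence, so the set $\{\tilde u>u\}$ need not be relatively compact and the maximum principle cannot be applied. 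The missing ingredient is an a priori bound $\limsup_{r\to\infty}\tilde u\leq 1$ valid for \emph{every} positive solution, which is the content of Proposition \ref{asymupperprop} and is where most of the work in the paper lies: one builds, on annuli $\{|r-r_\ast|<R\}$ deep in the end, radial supersolutions $u_R$ that blow up at the annulus boundary (descended from the Loewner--Nirenberg/Andersson--Chru\'sciel--Friedrich boundary blow-up solutions), proves via a scaling identity and monotonicity in $R$ that $u_R\searrow 1$ locally uniformly, and compares any solution against $Au_R$ using Lemma \ref{mplemma}. None of this is present in your proposal, and without it neither ``$\tilde u\leq u$ for all solutions'' nor ``unique solution with $\lim u=1$'' can be closed (the latter also needs the upper bound applied to the \emph{other} solution in the pair).

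The second gap is in your global supersolution. Under \eqref{introupperdecay} alone you only have an upper bound on $S_g$; ALH of regularity order $1$ controls the metric in $C^1$ but not its second derivatives, so $S_g$ need not be bounded below, and then $u_+=1+Ae^{-\alpha r}$ fails the supersolution inequality wherever $S_g$ is very negative (the term $S_gu_+$ overwhelms everything). This is why the paper proves existence from the subsolution alone, invoking the Aviles--McOwen machinery of \cite{aviles1988} which manufactures supersolutions on bounded exhausting domains; the explicit supersolution $1+Ae^{-\alpha r}$ is only constructed (Lemma \ref{supersollemma}) under the two-sided hypothesis $S_g\geq -n(n-1)-Ce^{-\alpha r}$. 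Relatedly, your small-constant subsolution inequality is written with the wrong orientation: for $u_-\equiv\delta$ the subsolution condition reads $S_g\leq -n(n-1)\delta^{4/(n-2)}$, which fails on the compact core where $S_g$ may be positive; the paper's $u_-$ is taken to be $0$ there and glued with a transmission condition. Finally, to show $\tilde g$ is ALH one also needs a first-derivative estimate $|\mathring\nabla u|_{\mathring g}=\mathcal{O}(e^{-\alpha r})$ and a change of radial coordinate $z=r+\int_r^\infty(1-u^{2/(n-2)})\,ds$ to restore $\tilde g_{zz}=1$; Schauder estimates are not directly available with only $L^\infty$ decay of the scalar curvature.
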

	
	Some comments on the conclusions of the theorem are in order. Concerning the uniqueness, we are able to slightly relax the condition $\lim_{r\rightarrow\infty} u = 1$; see Proposition \ref{uniqprop}. However, this necessary condition for uniqueness cannot be dropped in its entirety:
	
	\begin{exmp}\label{condneededintro}
		 Take $M = B_1$ to be the unit ball in $\R^n$, $\delta$ to be the Euclidean metric and $$g = \frac{4}{(1-|x|^2)^2} \delta \quad \text{ and } \quad g'= \frac{4R^2}{(R^2-|x|^2)^2} \delta$$ to be the Poincar\'{e} metrics on $B_1$ and $B_R$ for some $R>1$. Clearly, on $B_1$, if we write $g' = u^\frac{4}{n-2}g$ then $u$ must solve the Yamabe equation for $(M = B_1, g)$ and $\lim_{r(x)\rightarrow\infty} u (x) = 0$. In fact, $u(x) e^{\frac{n-2}{2}r(x)}$ tends to $\left(\frac{4R}{R^2 - 1}\right)^\frac{n-2}{2}$ as $r(x)\rightarrow\infty$. 
	\end{exmp}
	
	Regarding the loss in regularity order of the resulting conformal ALH metric, this is expected due to the assumption that the scalar curvature decays only in $L^\infty$ in \eqref{introfulldecay}. Under stronger decay assumptions, one may appeal to elliptic regularity theory to obtain correspondingly stronger regularity of the resulting conformal metric at infinity.
	
	We place our result in the context of the existing literature on the Yamabe problem for asymptotically hyperbolic manifolds. A fundamental and pioneering work of Loewner and Nirenberg \cite{MR0358078} showed that, on every open subset $M$ of $\R^n$ with regular boundary, there exists a complete and conformally flat metric $g$ which is ALH and has constant negative scalar curvature. Of special relevance to Theorem \ref{thmA} is the work of Andersson, Chru\'{s}ciel and Friedrich \cite{MR1186044} which proved, roughly speaking and among other things, the existence and uniqueness of a solution to the Yamabe problem when $(M,g)$ is ALH and admits a $C^2$ conformal compactification\footnote{The related question of when intrinsic definitions of non-compact ALH manifolds imply the existence of a corresponding conformal compactification of a certain regularity is addressed, for example, in \cite{MR2836592} and \cite{MR3169748}.}. We note that the existence of such a conformal compactification mandates the full decay of all sectional curvatures to a negative constant near infinity. Under weaker regularity assumptions on the conformal compactification, while still maintaining the full curvature decay, a similar result has been obtained in \cite{MR3761652}. We remark that in Theorem \ref{thmA}, we require instead an assumption on the scalar curvature alone. It is not hard to give examples of metrics which satisfy the conditions of Theorem \ref{thmA} but whose Ricci curvature is not asymptotic to a constant multiple of the metric; see Example \ref{3dscexample}. 

	In our second main result, we give a local condition which, together with \eqref{asymneg}, ensures the existence of a solution to the Yamabe problem. Our result is motivated by a result of Aviles and McOwen in \cite{aviles1988} which asserts that the negativity of the first eigenvalue of the conformal Laplacian on some compact domain, in combination with \eqref{asymneg}, implies the existence of a solution to the Yamabe problem. We prove:
	
	\begin{theorem}\label{dcexistthm}
		Let $(M,g)$ be a Riemannian manifold and suppose that there exist two open sets $\Omega_1\subset \Omega_2$ with $C^1$ boundary which satisfy (taking $d_g$ to be the geodesic distance with respect to $g$) that 
		\begin{equation}\label{eqspace}
			d_g(x,\partial \Omega_2) = R \text{ for each }x\in\partial\Omega_1 \text{ for some } R>0,
		\end{equation} 
		\begin{equation}\label{dccondexist}
		\frac{Vol_g(\Omega_2\setminus \Omega_1)}{Vol_g(\Omega_1)} \leq \sinh^2\left(\frac{\sqrt{n(n-2)}}{2}R\right),
		\end{equation}
		and the scalar curvature satisfies $S_g \leq -n(n-1)$ on $\Omega_2$. Then, the conformal Laplacian $-c_n\Delta_g + S_g$ for $(M, g)$ has a negative first eigenvalue on $\Omega_2$. Consequently, if $g$ satisfies \eqref{asymneg}, there exists a complete metric $\tilde{g}$ conformal to $g$ on $M$ of constant scalar curvature $-n(n-1)$.
	\end{theorem}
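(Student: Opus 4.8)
The plan is to first establish the spectral claim --- that $-c_n\Delta_g + S_g$ has negative first (Dirichlet) eigenvalue on $\overline{\Omega_2}$ --- and then deduce the existence of $\tilde{g}$ from the theorem of Aviles and McOwen \cite{aviles1988}, which produces a complete conformal metric of constant scalar curvature $-n(n-1)$ from negativity of this eigenvalue on a compact domain together with \eqref{asymneg}. For the spectral claim, by the variational characterisation
\[
\lambda_1\bigl(-c_n\Delta_g + S_g;\Omega_2\bigr) = \inf_{0\neq v\in H^1_0(\Omega_2)}\frac{\int_{\Omega_2}\bigl(c_n|\nabla_g v|^2 + S_g v^2\bigr)\,dV_g}{\int_{\Omega_2}v^2\,dV_g},
\]
it suffices to exhibit one admissible $v$ with negative Rayleigh quotient. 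Setting $\mu := \tfrac12\sqrt{n(n-2)}$, so that $c_n\mu^2 = \tfrac{4(n-1)}{n-2}\cdot\tfrac{n(n-2)}{4} = n(n-1)$, and using $S_g\leq -n(n-1)$ on $\Omega_2$, any $v$ with $\int_{\Omega_2}|\nabla_g v|^2\,dV_g \leq \mu^2\int_{\Omega_2}v^2\,dV_g$ has Rayleigh quotient at most $c_n\mu^2 - n(n-1) = 0$; I will upgrade this to strict negativity afterwards.

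The test function will be built from $\rho(x) := d_g(x,\partial\Omega_2)$. First I would extract from \eqref{eqspace} two facts: (i) every $x\in\Omega_1$ has $\rho(x)\geq R$, since a path in $M$ from $x$ to a point of $\partial\Omega_2$ must cross $\partial\Omega_1$ (note $\overline{\Omega_1}\cap\partial\Omega_2=\emptyset$, using $\Omega_1\subseteq\Omega_2$ and \eqref{eqspace}) and, by \eqref{eqspace}, its portion from the crossing point to $\partial\Omega_2$ has length at least $R$; and hence (ii) $\Omega_1\subseteq\{\rho\geq R\}$ while $\{x\in\Omega_2:\rho(x)<R\}\subseteq\Omega_2\setminus\Omega_1$. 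Then I would set
\[
\phi(t) := \frac{\sinh(\mu t)}{\sinh(\mu R)}\ \text{ for }\ 0\leq t\leq R,\qquad \phi(t):=1\ \text{ for }\ t\geq R,\qquad v:=\phi\circ\rho.
\]
Here $v$ is Lipschitz on $\overline{\Omega_2}$, vanishes on $\partial\Omega_2$, and equals $1$ on the nonempty open set $\Omega_1$, so $v\in H^1_0(\Omega_2)\setminus\{0\}$. The point of this precise $\phi$ is the identity
\[
\phi'(t)^2 = \mu^2\phi(t)^2 + \frac{\mu^2}{\sinh^2(\mu R)}\ \ (0\leq t\leq R),\qquad \phi'(t)=0\ \ (t>R),
\]
which is simply $\cosh^2 = 1+\sinh^2$.

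Next, using $|\nabla_g\rho|=1$ a.e.\ and the coarea formula --- which gives $\int_{\Omega_2}h(\rho)\,dV_g = \int_0^\infty h(t)\,A(t)\,dt$ with $A(t):=\mathcal{H}^{n-1}(\{\rho=t\})$ --- together with $|\nabla_g v|^2 = \phi'(\rho)^2$ a.e., one computes
\[
\mu^2\!\int_{\Omega_2}\! v^2\,dV_g - \int_{\Omega_2}\!|\nabla_g v|^2\,dV_g = \mu^2\!\int_R^\infty\! A(t)\,dt - \frac{\mu^2}{\sinh^2(\mu R)}\!\int_0^R\! A(t)\,dt.
\]
By (ii), $\int_R^\infty A\,dt = Vol_g(\{\rho\geq R\})\geq Vol_g(\Omega_1)$ and $\int_0^R A\,dt = Vol_g(\{\rho<R\})\leq Vol_g(\Omega_2\setminus\Omega_1)$, so the right-hand side is at least $\mu^2\bigl(Vol_g(\Omega_1) - \sinh^{-2}(\mu R)\,Vol_g(\Omega_2\setminus\Omega_1)\bigr)$, which is $\geq 0$ by \eqref{dccondexist}. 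Hence $\int_{\Omega_2}|\nabla_g v|^2\,dV_g\leq\mu^2\int_{\Omega_2}v^2\,dV_g$ and $\lambda_1(-c_n\Delta_g + S_g;\Omega_2)\leq 0$. To get strict negativity: if this eigenvalue were $0$, then $v$ would realise the infimum, hence satisfy $-c_n\Delta_g v + S_g v = 0$ weakly on $\Omega_2$; testing against functions supported in $\Omega_1$, where $v\equiv 1$ and $\nabla_g v\equiv 0$, would force $S_g=0$ a.e.\ on $\Omega_1$, contradicting $S_g\leq -n(n-1)<0$. Therefore $\lambda_1(-c_n\Delta_g + S_g;\Omega_2)<0$, and the existence of $\tilde{g}$ follows from \cite{aviles1988}.

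The crux is the pair of linked steps: deriving (i)--(ii) from \eqref{eqspace}, which is the mechanism converting the volume hypothesis \eqref{dccondexist} into a spectral gap, and recognising that $\phi = \sinh(\mu\,\cdot\,)/\sinh(\mu R)$ is precisely the profile for which $\phi'^2 - \mu^2\phi^2$ equals the constant $\mu^2/\sinh^2(\mu R)$ --- this is exactly what makes $\sinh^2\!\bigl(\tfrac12\sqrt{n(n-2)}\,R\bigr)$ the sharp threshold. The coarea manipulation and the $H^1_0$-membership of $v$ are then routine.
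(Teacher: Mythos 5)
Your proposal is correct and follows essentially the same route as the paper: the same test function (equal to $1$ on $\Omega_1$ and to the profile $\sinh\bigl(\tfrac{\sqrt{n(n-2)}}{2}\rho\bigr)/\sinh\bigl(\tfrac{\sqrt{n(n-2)}}{2}R\bigr)$ of the distance to $\partial\Omega_2$ on the collar), the same conversion of \eqref{eqspace} and \eqref{dccondexist} into a sign on the Rayleigh quotient, and the same final appeal to \cite{aviles1988}. The only cosmetic differences are that the paper obtains this profile as the unique minimiser of an Aronsson-type sup-norm problem (Lemma \ref{h0lemma}) whereas you simply verify that $c_n\phi'^2-n(n-1)\phi^2$ is constant, and that your strictness argument (testing the Euler--Lagrange equation against functions supported in $\Omega_1$) replaces the paper's regularity-versus-gradient-discontinuity argument in Lemma \ref{lambdabounds} --- both substitutions are sound.
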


	Our Theorem \ref{dcexistthm} gives a new sufficient condition for existence, relating a local volume growth to the solvability of the Yamabe problem on the whole manifold. The local condition \eqref{dccondexist} is tight in the following sense:
	
	\begin{remark}\label{sharpremark}
		The constant $\frac{\sqrt{n(n-2)}}{2}$ in \eqref{dccondexist} is sharp with respect to the existence of a negative first eigenvalue for the conformal Laplacian. Namely, for any $\beta >  \sqrt{n(n-2)} $ there exists a manifold $(M, g)$ and concentric annuli $\Omega_1 \subset \Omega_2$ satisfying \eqref{eqspace} with volume ratio
		\begin{equation}
		\frac{Vol_g(\Omega_2\setminus \Omega_1)}{Vol_g(\Omega_1)} \leq \sinh^2\Big(\frac{\beta}{2} R\Big)
		\label{Eq:VRSinh}
		\end{equation} and with scalar curvature satisfying $S_g \leq -n(n-1)$ on $\Omega_2$ but for which the first eigenvalue for the conformal Laplacian on $\Omega_2$ is positive. See Proposition \ref{sharpprop} and the comment following it.
	\end{remark}
	
	 While the relationship between volume comparison and the first eigenvalue for the Laplacian $-\Delta_g$ has been well observed in the literature, the relation between volume comparison and the solvability of the Yamabe problem is less understood. Although it is sharp for the existence of a negative first eigenvalue for the conformal Laplacian, we do not know whether our bound in \eqref{dccondexist} is optimal for the solvability of the Yamabe problem. It would be interesting to understand what the sharp volume ratio bound of the form \eqref{dccondexist} would be for existence of solutions to the Yamabe problem.
	
	Theorem \ref{dcexistthm} is of a different flavour to Theorem \ref{thmA}. We first note that, for the ALH manifolds treated in Theorem \ref{thmA}, the volume ratio in the asymptotic region behaves differently than \eqref{dccondexist}; in particular if we fix $\Omega_1$ in the asymptotic region and consider large $R$, the volume ratio satisfies \begin{equation}\label{lhexmp}
		\frac{Vol_g(\Omega_2\setminus \Omega_1)}{Vol_g(\Omega_1)} \approx e^{(n-1)R} \gg \sinh^2\left(\frac{\sqrt{n(n-2)}}{2}R\right).
	\end{equation}
	Furthermore, we have the following remark:
	\begin{remark}
		Consider a manifold $(M, g)$ of the form $M = \R \times N$ where $(N, h)$ is a manifold of constant negative scalar curvature and $g = dr^2 + e^{-\beta r}h \text{ for some } 0\leq\beta<\frac{\sqrt{n(n-2)}}{n-1}.$
		By \eqref{scal}, a computation gives $S_g = -(n-1)(n-2)e^{2\beta r} + \mathcal{O}(1)\rightarrow-\infty$ as $r\rightarrow\infty$. Additionally, taking $\Omega_1 = (r_0, r_0 + 1)\times N$, $\Omega_2 = (r_0 - R, r_0 + 1 + R)\times N$  for some $r_0 \gg R \gg 1$ we have that \eqref{dccondexist} is satisfied. And lastly, by Proposition \ref{wpeconfprop}, $g$ is not conformal to a locally hyperbolic metric. Consequently, $(M, g)$ provides an example which is not covered by Theorem \ref{thmA} and for which we may solve the Yamabe problem on $(M, g)$.
	\end{remark}
	
	We briefly discuss the proofs of our results. The proof of the existence part of Theorem \ref{thmA} makes use of the procedures in \cite{MR1186044} and \cite{aviles1988} where barrier functions play an important role. Due to the weaker asymptotic conditions on the scalar curvature in \eqref{introupperdecay}, or \eqref{introfulldecay}, additional work is required to construct such barrier functions. For the uniqueness part, we treat separately the lower and upper bounds at infinity for solutions of \eqref{yaeqintro}. For the upper bound, we adapt certain ideas from \cite{MR0358078} using certain reference solutions. The main difference is that we lack an explicit closed form of the reference solutions and so the final conclusion is drawn from an asymptotic analysis of these solutions, exploiting the conformal invariance of the equation. For the lower bound, we highlight that our treatment is very different from approaches seen in the literature, where one typically uses the existence of a sufficiently regular conformal compactification in order to compare the solution to some reference solution exterior to the compactified manifold. Owing to our lack of such a compactification, we instead perform a blow-up analysis intrinsically to the given manifold, identifying a limiting equation from which we deduce the desired lower bound at infinity.
	
	To prove Theorem \ref{dcexistthm}, we make use of a particular test function for the variational functional corresponding to the first eigenvalue of the conformal Laplacian. This test function is constructed by minimising the sup-norm of the integrand in the variational functional, following the procedure in \cite{MR196551} and using the volume ratio condition \eqref{dccondexist}. It was surprising to the authors that condition \eqref{dccondexist} for the volume ratio turned out to be sharp in the sense of Remark \ref{sharpremark} above. While falling outside the scope of the present paper, it would be interesting to understand what other geometric quantities similar in nature to the volume ratio used here could have relevance to the behaviour of the first eigenvalue of the conformal Laplacian and the solvability of the Yamabe problem.
	
	The paper is structured as follows. In Section \ref{alhsec}, we address Theorem \ref{thmA} and our work in the context of ALH manifolds. In Section \ref{dcsec}, we address Theorem \ref{dcexistthm} and our related study of the volume ratios of multiply warped product manifolds.
	
	\subsubsection*{Acknowledgments.} J.H. was partially supported by the EPSRC Centre for
		Doctoral Training in Partial Differential Equations [grant number EP/L015811/1]. The authors would like to thank the anonymous referee for their constructive comments.
		
    \section[The Yamabe problem for ALH manifolds]{The Yamabe problem for asymptotically locally hyperbolic manifolds}\label{alhsec}
	
	In this section, we prove Theorem \ref{thmA}. We first make precise our setting in Section \ref{defs}. Our results are then split into two main sections; Section \ref{existsec} addresses the existence of a solution to the Yamabe problem and Section \ref{asymsec} focuses on an understanding of the asymptotic behaviour of the obtained conformal factor. We then summarise how the results of the previous two sections combine to prove Theorem \ref{thmA} in Section \ref{proofsec}. Finally, in Section \ref{wpsubsec}, we provide a discussion of how Theorem \ref{thmA} can be applied to the broader class of asymptotically warped product manifolds, in particular via a result regarding the conformal classes of reference warped product metrics.

    \subsection{Definitions and notation}\label{defs}

We will consider manifolds which may be decomposed as a union $M= M_0 \cup M^+$, where $M_0$ is some compact interior region, $M^+$ is a non-compact exterior region and both parts are disjoint apart form their common boundary. We assume further that we may express $M^+ = \R_{\geq 0} \times N$ where $N$ is some $(n-1)$ - dimensional compact manifold. On the end $\R_{\geq 0} \times N$, we denote by $r$ a coordinate on the $\R_{\geq 0}$ fibre. Additionally, we denote the coordinates on any local (angular) chart on $N$ with a $\theta^a$, where $a=1,\ldots,n-1$, and we use $a,b,c,\ldots$ to index angular coordinates. When referring to the full set of coordinates on $M^+$ we use the notation $x^1 = \theta^1, \ldots, x^{n-1} = \theta^{n-1}$, $x^n =r$ and we use $i,j,k,\ldots$ to index over all coordinates.

We define a reference locally hyperbolic metric $\mathring{g}$ on the exterior region $M^+$ to be
\begin{equation}\label{referencemetric}
\mathring{g} = dr^2 +f_k^2(r+r_0)\mathring{h}
\end{equation}
for some $r_0>0$ where $\mathring{h}$ is a metric on $N$ of constant scalar curvature $(n-1)(n-2)k$ for $k\in\{-1, 0, 1\}$ and
\begin{equation*}
f_k(r) = \begin{cases}
\sinh(r) & \ k=1,\\
e^r & \  k=0,\\
\cosh(r) & \ k=-1.
\end{cases}
\end{equation*}
In particular, when $k=1$ and $N=\mathbb{S}^{n-1}$, $\mathring{g}$ is the standard hyperbolic metric.

We additionally note that the scalar curvature of a warped product metric like $\mathring{g}$ may be computed via the formula
\begin{equation*}
S_{\mathring{g}} = -2(n-1)\frac{f_k^{\prime\prime}}{f_k} - (n-1)(n-2)\left(\frac{f_k^\prime}{f_k}\right)^2 + \frac{S_{\mathring{h}}}{f_k^2}.
\end{equation*}
From this, one may compute that $S_{\mathring{g}}\equiv -n(n-1)$ for each $k$ in the above definition.

For the next definition below, we choose a finite set of preferred charts $U_i$ covering $N$, each with a preferred choice of local coordinates $\{\theta^1,\ldots, \theta^{n-1}\}$. We extend these charts to $M^+$ by defining $V_i = \R_{\geq 0}\times U_i$ with coordinates $\{r,\theta^1,\ldots,\theta^{n-1}\}$ and fix them from hereon. For a function $\varphi$ on the end $\R_{\geq 0}\times N$ we use the notation $\varphi = \mathcal{O}_m(e^{-\alpha r})$ to indicate that $\varphi$ and all of its first $m$ derivatives in the coordinates defined above decay as $e^{-\alpha r}$, that is there exists a constant $C>0$ such that $\varphi$ satisfies $|\partial_\beta \varphi| \leq Ce^{-\alpha r}$ where $\beta$ indicates any multi-index with $0 \leq |\beta|\leq m$.

We adopt the following definition of ALH manifolds which, while differing slightly, is consistent with those found in the literature (e.g. in \cite{MR3761652,MR2038048, MR3801943, MR1879228}).
\begin{defn}[Asymptotically Locally Hyperbolic]\label{alhdef}
	
	We say a Riemannian manifold $(M, g)$ is \emph{asymptotically locally hyperbolic of regularity order $m\in \mathbb{N}$ and with decay exponent $\alpha > 0$} if we can write $M= M_0 \cup (\R_{\geq 0} \times N)$ and we can write the metric components of $g$ on any preferred coordinate chart as
	\begin{align*}
		g_{rr} = \mathring{g}_{rr} = 1, \quad
		g_{ab} = \mathring{g}_{ab} + \mathcal{O}_m\left(e^{-(\alpha - 2)r}\right), \quad
		g_{ra} = \mathcal{O}_m\left(e^{-(\alpha - 1)r}\right)
	\end{align*} 
	where $\mathring{g}$ is a reference locally hyperbolic metric of the form \eqref{referencemetric} for some $k\in\{-1, 0, 1\}$. If $(N,\mathring{h})$ is the round sphere then we simply say that $g$ is asymptotically hyperbolic.
\end{defn}
It is clear that if $(M, g)$ is ALH with decay exponent $\alpha$ then it is also ALH with decay exponent $\alpha^\prime$ for any $0<\alpha^\prime <\alpha$. We note that if $\alpha < 1$, then $(M, g)$ is $C^{0, \alpha}$ conformally compactifiable and if $\alpha \geq 1$ then $(M, g)$ is $C^1$ conformally compactifiable. For terminology, see for example \cite{MR3801943}.

In the remainder of this paper, we will make regular use of the coordinate function $r$ corresponding to the $\R_{\geq 0}$ fibre of the exterior region $M^+$. We note that the particular choice of $r$ is not unique, in that the reference metric $\mathring{g}$ defined above may be expressed in the form \eqref{referencemetric} for arbitrarily many choices of coordinate function $r$ via diffeomorphism of $M^+$ or by an altogether different choice of splitting of $M$ into the interior and exterior regions $M_0$ and $M^+$. To avoid this complication, whenever we speak of an ALH manifold as defined above, we implicitly assume that there is a pre-chosen $r$.

We provide the following computational lemma establishing the corresponding decay of the metric inverse and Christoffel symbols which will be useful to us later.

\begin{lemma}\label{scalcurvlemma}
	Suppose $(M, g)$ is an ALH manifold of regularity order $1$ and decay exponent $\alpha$. Then, in any of the preferred charts
	\begin{align*}
	\begin{split}
	g^{rr} &= 1 + \mathcal{O}_1(e^{-2\alpha r}), \\
	\Gamma^r_{rr} &= \mathcal{O}\left(e^{-\alpha r}\right),\\
	\Gamma^a_{br} &= \mathring{\Gamma}^a_{br} +  \mathcal{O}\left(e^{-\alpha r}\right),
	\end{split}
	\qquad
	\begin{split}
	g^{ra} &= \mathcal{O}_1(e^{-(\alpha+1) r}),\\
	\Gamma^a_{rr} &= \mathcal{O}\left(e^{-(\alpha+1) r}\right),\\
	\Gamma^r_{ab} &= \mathring{\Gamma}^r_{ab} +  \mathcal{O}\left(e^{-(\alpha-2) r}\right),
	\end{split}
	\qquad
	\begin{split}
	g^{ab} &= \mathring{g}^{ab} + \mathcal{O}_1(e^{-(\alpha+2) r}),\\
	\Gamma^r_{ar} &= \mathcal{O}\left(e^{-(\alpha-1) r}\right),\\
	\Gamma^a_{bc} &= \mathring{\Gamma}^a_{bc} +  \mathcal{O}\left(e^{-\alpha r}\right),
	\end{split}
	\end{align*}
	where we use the notation $\mathring{\Gamma}$ to denote the Christoffel symbols of the reference locally hyperbolic metric $\mathring{g}$ which satisfy $$\mathring{\Gamma}^r_{ab} = \mathcal{O}_2\left(e^{2r}\right),\qquad \mathring{\Gamma}^a_{br} = \mathcal{O}_2\left(1\right), \qquad \mathring{\Gamma}^a_{bc} = \mathcal{O}_2\left(1\right).$$
\end{lemma}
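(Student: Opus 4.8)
The plan is to fix a single preferred chart $V_i=\R_{\ge0}\times U_i$, bookkeep every quantity by its order in $e^r$, and substitute into $\Gamma^i_{jk}=\tfrac12 g^{il}(\partial_j g_{lk}+\partial_k g_{lj}-\partial_l g_{jk})$. First I would record the inputs. From \eqref{referencemetric}, $\mathring g_{ab}=f_k^2(r+r_0)\mathring h_{ab}$ with $f_k(r+r_0)\asymp e^{r}$ and $f_k'/f_k\to1$, so $\partial_r\mathring g_{ab}=2f_kf_k'\mathring h_{ab}=\mathcal O_2(e^{2r})$, $\mathring g^{ab}=f_k^{-2}\mathring h^{ab}=\mathcal O_2(e^{-2r})$, and a direct computation gives $\mathring\Gamma^r_{ab}=-f_kf_k'\mathring h_{ab}=\mathcal O_2(e^{2r})$, $\mathring\Gamma^a_{br}=(f_k'/f_k)\delta^a_b=\mathcal O_2(1)$, $\mathring\Gamma^a_{bc}$ the Christoffels of $\mathring h$ (independent of $r$, hence $\mathcal O_2(1)$), while $\mathring\Gamma^r_{rr}=\mathring\Gamma^r_{ra}=\mathring\Gamma^a_{rr}=0$ since $\mathring g_{rr}\equiv1$, $\mathring g_{ra}\equiv0$. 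From Definition \ref{alhdef} and the $\mathcal O_1$ convention I have $g_{rr}\equiv1$, $g_{ab}=\mathring g_{ab}+\mathcal O_1(e^{(2-\alpha)r})$, $g_{ra}=\mathcal O_1(e^{(1-\alpha)r})$, together with the same bounds on their first derivatives.

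Next I would establish the inverse estimates by $2\times2$ block inversion, exploiting $g_{rr}\equiv1$. Writing $\gamma^{ab}$ for the inverse of the matrix $(g_{ab})$ and noting $g_{ar}g_{rb}=\mathcal O(e^{2(1-\alpha)r})$ is of strictly lower order than $g_{ab}\asymp e^{2r}$, I write the angular block as $\mathring g_{ab}+E_{ab}$ with $E_{ab}=\mathcal O_1(e^{(2-\alpha)r})$ and Neumann-expand its inverse to get $\gamma^{ab}=\mathring g^{ab}+\mathcal O_1(e^{-(\alpha+2)r})$. The block formulas then give $g^{ab}=\gamma^{ab}+\mathcal O_1(e^{-(2\alpha+2)r})=\mathring g^{ab}+\mathcal O_1(e^{-(\alpha+2)r})$, $g^{ra}=-g^{rr}g_{rc}\gamma^{ca}=\mathcal O_1(e^{-(\alpha+1)r})$, and $g^{rr}=(1-g_{ra}\gamma^{ab}g_{br})^{-1}=1+\mathcal O_1(e^{-2\alpha r})$, the $\mathcal O_1$ control passing through because every ingredient has controlled first derivatives.

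For each of the six Christoffel types I would then substitute these estimates and split the sum over $l$ into $l=r$ and $l$ angular. In the three cases where $\mathring\Gamma\neq0$ ($\Gamma^a_{br}$, $\Gamma^r_{ab}$, $\Gamma^a_{bc}$), replacing every factor by its reference counterpart reproduces $\mathring\Gamma$ exactly, and the remainder is a finite sum of terms each carrying a single ``error'' factor against reference-sized factors, which I would check lies in the claimed decay class; in the three cases where $\mathring\Gamma=0$ ($\Gamma^r_{rr}$, $\Gamma^a_{rr}$, $\Gamma^r_{ar}$) I would simply multiply out the decay rates, using $g_{rr}\equiv1$ to annihilate several terms outright.

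The only genuine obstacle I anticipate is the interplay of the exponentially large $\partial_r g_{ab}\asymp e^{2r}$ and $\mathring\Gamma^r_{ab}\asymp e^{2r}$ with the small cross terms $g_{ra}$, $g^{ra}$: one must ensure that products such as $g^{ra}\partial_r g_{bc}$ are placed on the correct side of the estimate, and that the $\mathcal O_1$ bound on $g^{il}$ is actually used whenever a derivative lands on it. The entries that see this interplay, namely $\Gamma^r_{ar}$ and $\Gamma^a_{bc}$, are the ones I would verify most carefully; beyond that the proof is a routine, if somewhat lengthy, case analysis with no new idea.
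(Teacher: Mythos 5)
The paper leaves this lemma unproved (it is presented as a routine computation), so there is no argument of the paper's to compare against; your plan --- record the reference quantities, invert the metric by a block/Neumann expansion exploiting $g_{rr}\equiv 1$, then substitute into the Christoffel formula and split the sum over $l$ --- is exactly the intended route, and your reference-metric computations and all three inverse-metric estimates are correct as stated.

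However, the one entry you flag as needing the most care, $\Gamma^a_{bc}$, is precisely where the check you defer does not close as written. In $\Gamma^a_{bc}=\tfrac12 g^{al}(\partial_b g_{lc}+\partial_c g_{lb}-\partial_l g_{bc})$ the $l=r$ contribution contains $-\tfrac12 g^{ar}\partial_r g_{bc}=\mathcal O\big(e^{-(\alpha+1)r}\big)\cdot\mathcal O\big(e^{2r}\big)=\mathcal O\big(e^{-(\alpha-1)r}\big)$, which exceeds the claimed $\mathcal O\big(e^{-\alpha r}\big)$ by a factor of $e^{r}$, and there is no cancellation in general: for $g=dr^2+2e^{-(\alpha-1)r}\,dr\,d\theta^1+e^{2r}\big((d\theta^1)^2+(d\theta^2)^2\big)$ on $\R_{\geq 0}\times\mathbb{T}^2$ one computes $\Gamma^1_{11}=e^{-(\alpha-1)r}\big(1+o(1)\big)$ while $\mathring{\Gamma}^1_{11}=0$. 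The rate $e^{-(\alpha-1)r}$ is also the one consistent with the other five entries if one regards $\Gamma-\mathring{\Gamma}$ as a $(1,2)$-tensor of $\mathring{g}$-norm $\mathcal O\big(e^{-\alpha r}\big)$. So when you execute your plan you should record $\Gamma^a_{bc}=\mathring{\Gamma}^a_{bc}+\mathcal O\big(e^{-(\alpha-1)r}\big)$; this is an overstatement in the lemma itself rather than a defect of your method, and it is harmless where the lemma is used, since in $b^i=g^{jk}(\Gamma^i_{jk}-\mathring{\Gamma}^i_{jk})$ of \eqref{perturbedlapl} the difference $\Gamma^a_{bc}-\mathring{\Gamma}^a_{bc}$ is contracted against $g^{bc}=\mathcal O\big(e^{-2r}\big)$, so $b^i=\mathcal O\big(e^{-\alpha r}\big)$ still holds. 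All remaining entries check out with your bookkeeping; in particular $\Gamma^a_{br}$ is safe because its $l=r$ term vanishes identically thanks to $g_{rr}\equiv1$, and $\Gamma^r_{ar}$ indeed picks up its stated rate $e^{-(\alpha-1)r}$ from $g^{rb}\partial_r g_{ba}$.
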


We provide an example of an ALH manifold in the sense of Definition \ref{alhdef} which is not covered by the existing literature for the Yamabe problem on asymptotically hyperbolic manifolds. In particular, the class described below includes metrics where not all of the sectional curvatures are decaying to a negative constant at infinity.

\begin{exmp}\label{3dscexample}
	Consider the warped product manifold $M^3 = \R_{\geq0}\times \mathbb{T}^{2}$ where $\mathbb{T}^{2}$ is the $2$-dimensional flat torus with metric $(dx^1)^2 + (dx^2)^2$ and standard coordinates $\{x^a\}$. We endow $M^3$ with the diagonal metric
	\begin{equation*}
	g = dr^2 + e^{2r}\left(p(r)(dx^1)^2 + p^{-1}(r)(dx^2)^2\right).
	\end{equation*}
	This metric is ALH in the sense of Definition \ref{alhdef} provided, for example, $p(r) = 1 + \mathcal{O}(e^{-\alpha r})$ and $p^\prime(r) = \mathcal{O}(e^{-\alpha r})$, which we assume in this example.
	The metric $g$ has Ricci curvature
	\begin{gather*}
	R_{rr} = -2 - \frac{1}{2}\left(\frac{p^\prime}{p}\right)^2, \qquad R_{11} = e^{2r}p\left(-2 + \frac{1}{2}\left(\frac{p^\prime}{p}\right)^2 - \frac{p^\prime}{p} - \frac{1}{2}\left(\frac{p^{\prime\prime}}{p}\right)\right),\\
	R_{22} = e^{2r}p^{-1}\left(-2 - \frac{1}{2}\left(\frac{p^\prime}{p}\right)^2 + \frac{p^\prime}{p} + \frac{1}{2}\left(\frac{p^{\prime\prime}}{p}\right)\right).
	\end{gather*}
	We see that the Ricci curvature does not necessarily decay to a constant multiple of the metric (note the presence of the $p^{\prime\prime}$ term in $R_{11}$ and $R_{22}$). In contrast, the metric $g$ has scalar curvature
	\begin{equation}
	S_g = -6 - \frac{1}{2}\left(\frac{p^\prime}{p}\right)^2 = -6 - \mathcal{O}(e^{-\alpha r}).
	\end{equation}
\end{exmp}

The example above demonstrates a class which does not satisfy the requirements (discussed in more detail in the following section) in \cite{MR1186044} or \cite{MR3761652} but which falls under Definition \ref{alhdef}. There are certainly many such $p$ which behave wildly in $C^2$ and so have poor behaviour of the Ricci curvature, for example take $p(r) = 1 + e^{-2\alpha r}\sin(e^{\alpha r})$.

\subsection{Existence of a solution for the Yamabe problem}\label{existsec}

In this section we will exhibit a sub-solution to \eqref{yaeqintro}, from which existence of a solution follows via the arguments of Aviles and McOwen in \cite{aviles1988}.

\begin{lemma}\label{subsollemma}
	Let $(M,g)$ be ALH of regularity order $1$ and with decay exponent $\alpha < n$ and satisfy \eqref{introupperdecay}. For any $0<\beta < \min(n-1, \alpha)$, there exist constants $0<\delta<1$, close to $1$, and $\theta >0$, large, such that the function $u_-\in H^1_{\loc}(M)$ defined by 
	\begin{equation*}
		u_- := \begin{cases}
		1 - C(\theta, \delta)e^{-\alpha r} \qquad &\text{ on } \{r_\delta \leq r \}\times N \\
		1 - \theta e^{-\beta r} \qquad &\text{ on } \{r_\theta\leq r \leq r_\delta\}\times N \\
		0 \qquad \qquad \qquad  \qquad &\text{  on  } M_0\cup (\{r\leq r_\theta\} \times N) \ ,
		\end{cases}
	\end{equation*} where $r_\delta := \frac{1}{\beta}\log \left(\frac{\theta}{1-\delta}\right)$, $r_\theta:= \frac{1}{\beta}\log\left(\theta\right)$ and $C(\theta, \delta) = \left(\frac{\theta}{1-\delta}\right)^{\frac{\alpha}{\beta}}$,
	is a sub-solution of \eqref{yaeqintro} on $M$ which is positive outside of some compact set.
\end{lemma}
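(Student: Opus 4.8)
The plan is to verify directly that $u_-$ is a weak sub-solution of \eqref{yaeqintro} on $M$, i.e. that $\mathcal{Y}(u_-):=-c_n\Delta_g u_- + S_g u_- + n(n-1)\,u_-^{\frac{n+2}{n-2}}\le 0$ in the distributional sense; positivity on $\{r>r_\theta\}\times N$ then follows from the explicit formula. The function $u_-$ is Lipschitz, hence in $H^1_{\loc}(M)$, smooth in each of the three open regions $\{r>r_\delta\}\times N$, $\{r_\theta<r<r_\delta\}\times N$, $M_0\cup(\{r<r_\theta\}\times N)$, and --- this is what the choices of $C(\theta,\delta)$, $r_\theta$, $r_\delta$ arrange --- continuous across the interfaces $\{r=r_\theta\}$ and $\{r=r_\delta\}$. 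It therefore suffices to show: (i) $\mathcal{Y}(u_-)\le 0$ pointwise in each region; and (ii) at each interface the radial (increasing-$r$) derivative of $u_-$ does not jump downwards, i.e. $\partial_r u_-|_{+}\ge\partial_r u_-|_{-}$. Indeed, (ii) forces the singular part of $-c_n\Delta_g u_-$ supported on the interfaces to be a non-positive measure, so (i) and (ii) together give $\mathcal{Y}(u_-)\le 0$ weakly on $M$. On the innermost region $u_-\equiv 0$ and $\mathcal{Y}(0)=0$, so (i) is trivial there; before treating the outer and transition regions, one fixes $\theta$ large enough that $r_\theta$ is large and the estimates of Lemma \ref{scalcurvlemma} are effective on $\{r\ge r_\theta\}\times N$.

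On the outer region, using Lemma \ref{scalcurvlemma} and the warped-product identity $\mathring{g}^{ab}\mathring{\Gamma}^r_{ab}=-(n-1)f_k'/f_k$, one computes $\Delta_g(e^{-\alpha r})=e^{-\alpha r}\bigl(\alpha^2 g^{rr}+\alpha\,g^{ij}\Gamma^r_{ij}\bigr)=e^{-\alpha r}\bigl(\alpha(\alpha-n+1)+\mathcal{O}(e^{-\min(\alpha,2)r})\bigr)$. Writing $\rho:=C(\theta,\delta)\,e^{-\alpha r}$, which satisfies $0<\rho\le 1-\delta<1$ on $\{r\ge r_\delta\}$ by continuity of $u_-$ there, one expands $(1-\rho)^{\frac{n+2}{n-2}}=1-\tfrac{n+2}{n-2}\rho+\mathcal{O}_n(\rho^2)$ and uses \eqref{introupperdecay} together with the algebraic identities $n(n-1)\bigl(1-\tfrac{n+2}{n-2}\bigr)=-nc_n$ and $\alpha(\alpha-n+1)-n=(\alpha-n)(\alpha+1)$ to obtain $\mathcal{Y}(u_-)\le\rho\bigl[c_n(\alpha-n)(\alpha+1)+c_n\,\mathcal{O}(e^{-\min(\alpha,2)r})+C_0/C(\theta,\delta)+C_n(1-\delta)\bigr]$, where $C_0$ is the constant in \eqref{introupperdecay} and $C_n$ depends only on $n$. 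Since $0<\alpha<n$, the leading coefficient $c_n(\alpha-n)(\alpha+1)$ is strictly negative, so choosing first $\delta$ close enough to $1$ (to absorb the quadratic error $C_n(1-\delta)$) and then $\theta$ large enough (so the asymptotic error is small on $\{r\ge r_\delta\}$ and $C(\theta,\delta)\to\infty$) makes the bracket negative, establishing (i) here.

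The transition region is the crux, because there $u_-=1-\theta e^{-\beta r}$ is \emph{not} close to $1$: with $s:=\theta e^{-\beta r}$, the quantity $s$ sweeps all of $[1-\delta,1]$ as $r$ sweeps $[r_\theta,r_\delta]$, so one cannot linearise the nonlinearity. The analogous computation (with $\alpha$ replaced by $\beta$, and no Taylor expansion) gives $\mathcal{Y}(u_-)\le\psi(s)+c_n s\cdot\mathcal{O}(e^{-\min(\alpha,2)r})+C_0\,e^{-\alpha r}(1-s)$, where $\psi(s):=c_n\beta(\beta-n+1)\,s-n(n-1)\bigl[(1-s)-(1-s)^{\frac{n+2}{n-2}}\bigr]$. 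One then checks $\psi(0)=0$, $\psi(1)=c_n\beta(\beta-n+1)<0$ (this uses $\beta<n-1$), and $\psi''(s)=\tfrac{4n(n-1)(n+2)}{(n-2)^2}(1-s)^{\frac{6-n}{n-2}}>0$ on $(0,1)$; convexity of $\psi$ then forces $\psi(s)\le s\,\psi(1)<0$ for all $s\in(0,1]$. The scalar-curvature error is handled via $C_0\,e^{-\alpha r}(1-s)=C_0\,\theta^{-\alpha/\beta}s^{\alpha/\beta}(1-s)\le C_0\,\theta^{-\alpha/\beta}s$, using $\alpha/\beta>1$ --- this is exactly where $\beta<\alpha$ is needed --- and the asymptotic error is $c_n s$ times a quantity tending to $0$ as $\theta\to\infty$ (since $r_\theta\to\infty$). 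Hence for $\theta$ large, $\mathcal{Y}(u_-)\le\tfrac12 s\,\psi(1)<0$, which is (i) on the transition region.

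Finally, for (ii): continuity forces $u_-\equiv\delta$ on $\{r=r_\delta\}$ and $u_-\equiv 0$ on $\{r=r_\theta\}$, and one computes the radial-derivative jumps to be $\beta-0=\beta>0$ at $\{r=r_\theta\}$ and $\alpha(1-\delta)-\beta(1-\delta)=(\alpha-\beta)(1-\delta)>0$ at $\{r=r_\delta\}$, both non-negative precisely because $\alpha>\beta$. Together with (i) this proves that $u_-$ is a weak sub-solution. I expect the main obstacle to be part (i) in the transition region: the convenient linear cancellation available near infinity disappears once the barrier has dropped toward $0$, and one must instead exploit the global convexity of $\psi$ on $[0,1]$ with $\psi(0)=0$, $\psi(1)<0$ --- which is exactly what pins down the constraints $0<\beta<\min(n-1,\alpha)$ --- while taking $\delta\to1$ and $\theta\to\infty$ disposes of the remaining error terms.
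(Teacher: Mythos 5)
Your argument is correct and follows essentially the same route as the paper: the same two-piece exponential barrier, the same transmission (radial derivative jump) conditions at $r_\theta$ and $r_\delta$, the same leading-order cancellations $c_n(\alpha-n)(\alpha+1)<0$ on the outer region and $c_n\beta(\beta-n+1)<0$ on the transition region, and the same order of choosing $\delta$ close to $1$ and then $\theta$ large. The only substantive variation is in the transition region, where the paper bounds the nonlinearity by $u_-^{\frac{n+2}{n-2}}\le\delta^{\frac{4}{n-2}}u_-$ (so that $\delta$ close to $1$ is needed there too), whereas you exploit convexity of $\psi$ on $[0,1]$ together with $\psi(0)=0$ and $\psi(1)<0$ to get $\psi(s)\le s\,\psi(1)$ for all $s$ --- a slightly cleaner estimate that makes that region independent of $\delta$.
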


We note here that the choice of $C(\theta, \delta)$ ensures that $u_-$ is continuous and therefore belongs to $H^1_{\loc}(M)$. The change at $r_\delta$ is used only in the case that $n-1\leq \alpha < n$.

We have a sub-solution of the form $u_- = u_-(r)$ so we are led to consider the ordinary differential inequality arising immediately from \eqref{yaeqintro}
\begin{align}\label{odeyamabe}
\nonumber-c_n \left(1 + \mathcal{O}(e^{-2\alpha r})\right)u_-^{\prime\prime} &+ c_n\left((n-1)\frac{f_k^\prime}{f_k} + \mathcal{O}(e^{-\alpha r}) \right)u_-^\prime\\
 -& \left(n(n-1) + \mathcal{O}(e^{-\alpha r})\right)u_- \leq -n(n-1)u_-^{\frac{n+2}{n-2}} .
\end{align}

\begin{proof}

	Fix some $0<\beta< \min(n-1, \alpha)$. To establish that $u_-$ is a sub-solution, we must show that \eqref{odeyamabe} holds on $\{r_\theta\leq r\leq r_\delta\}$ and $\{r_\delta \leq r \}$ and check the following transmission conditions at $r_\theta$ and at $r_\delta$,
	\begin{equation*}
		\lim_{r\nearrow r_\theta} u_-^\prime(r) \leq \lim_{r\searrow r_\theta} u_-^\prime(r) \qquad\text{and}\qquad \lim_{r\nearrow r_\delta} u_-^\prime(r) \leq \lim_{r\searrow r_\delta} u_-^\prime(r).
	\end{equation*}
	The first condition is immediate as $\beta > 0$ and the second condition holds true as $\beta < \alpha$.
	
	To establish \eqref{odeyamabe} it suffices to show, for some constant $C_1>0$ depending only on $g$, that there exist $\theta$ and $\delta$ (possibly depending on $\beta$) such that
	\begin{align}\label{convenientineq1}
		\begin{split}
		L_-u_- := &-c_n u_-^{\prime\prime} - c_n\left((n-1)\frac{f_k^\prime}{f_k} - C_1e^{-\alpha r}\right)u_-^\prime\\ &-\left(n(n-1) - C_1e^{-\alpha r}\right)u_- + C_1e^{-2\alpha r}|u_-^{\prime\prime}|
	 	\leq -n(n-1)u_-^{\frac{n+2}{n-2}}
		\end{split}
	\end{align}
	holds on $\{r_\theta< r< r_\delta\}$ and $\{r_\delta < r \}$. We note here that $u_-^\prime \geq 0$ for all $r$.
	
	In the following, we write $C$ to indicate a constant changing from line to line but depending only on $g$. For $\{r_\theta < r < r_\delta\}$,  we have that $n(n-1)u_-^{\frac{n+2}{n-2}} \leq n(n-1)\delta^{\frac{4}{n-2}} u_-$ as $0\leq u_- \leq \delta$. We compute 
	\begin{align*}
	L_-u_- + n(n-1)u_-^{\frac{n+2}{n-2}} < & \ c_n\theta\left(\beta^2 - (n-1)\beta + \frac{1}{4}n(n-2)(1 - \delta^{\frac{4}{n-2}})\right)e^{-\beta r}\\
	& +C(\underbrace{e^{(\beta- \alpha) r} +\theta e^{-\alpha r} + \theta e^{-2r}}_{A})e^{-\beta r}
	\end{align*}
	where we used that $\frac{f_k^\prime(r+r_0)}{f_k(r+r_0)} > 1 - Ce^{-2r}$ and $u_-^\prime \geq 0$. Note first that, for $\delta$ close to $1$, the first term of the RHS of the above is a negative multiple of $e^{-\beta r}$. In addition, as $\beta < \alpha$ we have, for $r\geq r_\theta$, that $|A(r)| \leq A(r_\theta) \leq  C(\theta^{1-\frac{\alpha}{\beta}}+\theta^{1-\frac{2}{\beta}}).$ Consequently, for $\theta$ sufficiently large, the first term on the RHS dominates and, for all $\delta$ sufficiently close to 1, we have $L_-u_- + n(n-1)u_-^{\frac{n+2}{n-2}}<0 \text{ in } \{r_\theta < r < r_\delta\}.$ We choose such a $\theta$ and fix it from here on.
	
	On $\{r > r_\delta\}$, note that we have $\delta \leq u_- \leq 1$. As
	\begin{align*}
	\left|x^{\frac{n+2}{n-2}} - 1 - \frac{n+2}{n-2}(x-1)\right| = &\left|\int_{x}^{1} 4\frac{n+2}{(n-2)^2} t^{-\frac{n-6}{n-2}} (x-t) \ dt\right| < 4\frac{n+2}{(n-2)^2}\delta^{-\frac{n-6}{n-2}}(x-1)^2
	\end{align*}
	for $\delta < x \leq 1$. We have,
	\begin{align*}
	n(n-1)u_-^{\frac{n+2}{n-2}}< n(n-1) - c_n \frac{n}{4}(n+2)C(\theta, \delta) e^{-\alpha r} + c_n\frac{n(n+2)}{(n-2)}\delta^{-\frac{n-6}{n-2}}C(\theta, \delta)^2 e^{-2\alpha r}\ .
	\end{align*} 
	Using again that $\frac{f_k^\prime(r+r_0)}{f_k(r+r_0)} > 1 - Ce^{-2r}$ and $u_-^\prime \geq 0$, we obtain
	\begin{align}\label{rhsbound}
	\nonumber L_-u_-  + n(n-1)u_-^{\frac{n+2}{n-2}}  &\leq  c_n e^{-\alpha r}C(\theta, \delta)\Big[\alpha^2 - (n-1)\alpha -n\\
	& +C\Big(\underbrace{\delta^{-\frac{n-6}{n-2}}C(\theta, \delta) e^{-\alpha r} + e^{-\alpha r}+e^{-2 r}+ \frac{1}{C(\theta, \delta)}}_B\Big)\Big].
	\end{align}
	As $\alpha < n$, $\alpha^2 - (n-1)\alpha -n < 0$. We note that $B(r)$ is non-increasing. We have for $r>r_\delta$.
	\begin{align*}
	0<B(r)<B(r_\delta) = \Bigg[\delta^{-\frac{n-6}{n-2}}(1-\delta)+\left(\frac{\theta}{1-\delta}\right)^{-\frac{\alpha}{\beta}}+\left(\frac{\theta}{1-\delta}\right)^{-\frac{2}{\beta}} + \frac{1}{C(\theta, \delta)}\Bigg] \rightarrow 0 \text{ as } \delta\nearrow 1.
	\end{align*}
	It follows that for $\delta$ close to 1, $L_-u_-  + n(n-1)u_-^{\frac{n+2}{n-2}} <0$ as required.	
\end{proof}

We note that if $\alpha = n$, the leading term in \eqref{rhsbound} vanishes and so the above computation does not work 	. However, it is possible to adapt the proof to include the case that $\alpha = n$ by using a decay rate of $re^{-nr}$ for $r>r_\delta$ and making the corresponding adjustments. A similar obstacle occurs in the later construction of a corresponding super-solution and can be treated with the same adjustment.

We now use the above sub-solution to prove the existence part of Theorem \ref{thmA}.
\begin{prop}\label{part1lemma}
	Let $(M,g)$ be ALH of regularity order $1$ and with decay exponent $\alpha\in(0,n)$ satisfying \eqref{introupperdecay}. There exists a positive smooth solution $u$ of \eqref{yaeqintro} on $M$ satisfying $u \geq 1 - Ce^{-\alpha r}$ for some $C>0$. Consequently, there exists a complete conformal metric $\tilde{g}$ such that $S_{\tilde{g}} \equiv -n(n-1)$ on $M$.
\end{prop}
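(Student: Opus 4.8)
The plan is to feed the sub-solution $u_-$ of Lemma~\ref{subsollemma} into the existence scheme of Aviles--McOwen~\cite{aviles1988}: build an ordered sub-/super-solution pair, solve \eqref{yaeqintro} on an exhaustion of $M$ by compact domains, pass to the limit, and then read off completeness from the lower bound on the limiting conformal factor. First I would fix the geometry: by \eqref{introupperdecay} there is $r_1$ with $S_g\le -n(n-1)+Ce^{-\alpha r}\le -\tfrac12 n(n-1)<0$ on the end $E:=\{r\ge r_1\}\times N$, while on the compact core $K:=M_0\cup(\{r\le r_1\}\times N)$ the scalar curvature is a bounded smooth function. It is also convenient to split off a dichotomy: if the conformal Laplacian $-c_n\Delta_g+S_g$ has a negative first Dirichlet eigenvalue on some bounded domain then, since \eqref{introupperdecay} implies \eqref{asymneg}, the eigenvalue criterion of \cite{aviles1988} already produces a complete solution---and, running that same barrier construction with $u_-$ as lower barrier, one obtained $\ge u_-$---so we may assume $-c_n\Delta_g+S_g$ has nonnegative first eigenvalue on every bounded domain, hence satisfies the maximum principle there.

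Next comes the iteration. Choose an exhaustion $\Omega_1\subset\subset\Omega_2\subset\subset\cdots$ with $\bigcup_j\Omega_j=M$ and $\partial\Omega_j\subset E$. On each $\Omega_j$ the function $u_-$ is a sub-solution and any sufficiently large constant $A_j$ (taken with $A_j^{4/(n-2)}\ge \sup_{\Omega_j}(-S_g)/(n(n-1))$, finite by compactness) is a super-solution of the Dirichlet problem with boundary value $A_j$, and $u_-\le A_j$; monotone iteration, after absorbing a large multiple of the unknown to render the nonlinearity monotone on the bounded interval $[\,u_-,A_j\,]$, yields a solution $u_j$ with $u_-\le u_j\le A_j$ on $\Omega_j$. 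The crux is to bound $u_j$ uniformly in $j$ on each fixed compact subset of $M$. On compact subsets of the negatively curved end $E$ this comes from the Keller--Osserman estimate for $\Delta v\gtrsim v^{(n+2)/(n-2)}$, which applies wherever $v$ is large relative to the locally bounded quantity $|S_g|$, once $\partial\Omega_j$ has receded far enough; on $K$ the maximum principle secured by the dichotomy propagates these bounds inward from $\{r=r_1\}$. Interior elliptic estimates and a diagonal subsequence then give $u_j\to u$ in $C^2_{\loc}(M)$, with $u$ a solution of \eqref{yaeqintro} satisfying $u_-\le u$; since $u\ge u_-$ and $u_-$ is positive outside a compact set, the strong maximum principle forces $u>0$ on all of $M$, and $u$ is smooth by bootstrapping. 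From the explicit form of $u_-$ near infinity, $u\ge u_-\ge 1-Ce^{-\alpha r}$ on $E$.

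Finally, $u\ge 1-Ce^{-\alpha r}\to 1$ along any divergent sequence, so $u$ is bounded below by a positive constant on $M$; hence $\tilde g=u^{4/(n-2)}g\ge c\,g$ for some $c>0$, every divergent curve has $\tilde g$-length at least $\sqrt c$ times its $g$-length, and completeness of $g$ gives completeness of $\tilde g$, with $S_{\tilde g}\equiv -n(n-1)$ by the conformal change formula.

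The genuinely substantive input is the sub-solution of Lemma~\ref{subsollemma}; the step I expect to be most delicate here is the passage to the non-compact limit, because \eqref{introupperdecay} controls $S_g$ only from above, so $u_j$ (and $u$) need not be globally bounded on the end, and a naive uniform super-solution is unavailable. It is precisely the Keller--Osserman estimate on the negatively curved end, combined with the eigenvalue dichotomy on the compact core, that prevents the limit from degenerating on compact sets; everything else is the barrier scheme of \cite{aviles1988}.
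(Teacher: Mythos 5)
Your proposal is correct and follows essentially the same route as the paper: the paper's proof consists precisely of feeding the sub-solution of Lemma~\ref{subsollemma} into the sub-/super-solution scheme of Aviles--McOwen (cited as a black box, which you reconstruct via exhaustion, constant super-solutions, Keller--Osserman interior bounds and elliptic estimates), then using the strong maximum principle to upgrade $u \geq u_- \geq 0$ to strict positivity on the compact core, with completeness read off from the lower bound $u \geq 1 - Ce^{-\alpha r}$. The only substantive difference is that your eigenvalue dichotomy is superfluous --- the Keller--Osserman interior estimate applies on every compact subset of $M$, since the superlinear term $n(n-1)u^{(n+2)/(n-2)}$ dominates the locally bounded linear term $S_g u$ regardless of the sign of $S_g$, so no separate maximum-principle argument on the compact core is needed.
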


\begin{proof}
	By Lemma \ref{subsollemma} there exists a sub-solution $u_-$ of \eqref{yaeqintro} which satisfies $u_- \geq 1 - Ce^{-\alpha r}$. The sub- and super-solution argument of Aviles and McOwen in \cite[Proposition 2.1]{aviles1988} yields a smooth solution $u$ of \eqref{yaeqintro} satisfying $u \geq u_-$ on $M$. It remains to show that $u$ is positive on all of $M$; once this is established, we will have obtained a conformal metric $\tilde{g} = u^{\frac{4}{n-2}}g$ which is complete, from the sub-solution lower bound, and has constant scalar curvature.
	
	As $u\geq u_-$, $u$ is non-negative and positive outside of some compact set. Let $B$ be a large ball on which $u\not\equiv 0$ and outside of which $u>0$. On $B$, the scalar curvature $S_g \leq A$ for some constant $A>0$. Consequently, taking a sufficiently large constant $C>0$ such that $u$ satisfies 
	\begin{equation*}
	c_n\Delta_g u - Cu \leq [n(n-1)u^{\frac{n+2}{n-2}} + Au] - Cu \leq 0,
	\end{equation*}
	we can apply the strong maximum principle (\cite[Theorem 3.5]{gilbarg2001elliptic}) to deduce that, as $u\not\equiv 0$ on $B$, $u$ is strictly positive in $B$ and so on all of $M$.
\end{proof}

\subsection{Asymptotic properties of the conformal factor}\label{asymsec}

In order to complete the proof of the remaining parts of Theorem \ref{thmA}, we study the asymptotic properties of solutions of the Yamabe equation as well as the asymptotic properties of the particular conformal factor obtained as the solution to \eqref{yaeqintro} in the previous sub-section.

\subsubsection{An upper bound at infinity}\label{aprioriup}

We will first establish an a priori upper bound on solutions to the Yamabe equation for ALH manifolds in the sense of Definition \ref{alhdef}.
\begin{prop}\label{asymupperprop}
	Let $(M,g)$ be ALH in the sense of Definition \ref{alhdef} and suppose that
	\begin{equation}\label{sclower}
	\liminf_{r(x)\rightarrow \infty} S_g(x) \geq -n(n-1).
	\end{equation}
	Then all solutions of the \eqref{yaeqintro} on $(M,g)$ satisfy $\limsup_{r(x)\rightarrow\infty}u(x) \leq 1$.
\end{prop}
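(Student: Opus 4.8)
The plan is to argue by a barrier/comparison argument at infinity, exploiting the assumed lower bound on the scalar curvature \eqref{sclower}. Fix any $\eta > 0$; I will show $u \leq 1 + \eta$ eventually, i.e. outside a sufficiently large compact set, which gives the claim upon letting $\eta \to 0$. By \eqref{sclower}, there is $R_0 > 0$ such that $S_g \geq -n(n-1) - \epsilon$ on $\{r \geq R_0\} \times N$, where $\epsilon > 0$ will be chosen small depending on $\eta$. The idea is to construct a radial super-solution $w = w(r)$ of \eqref{yaeqintro} on the region $\{r \geq R_1\}$ (for some $R_1 \geq R_0$) which dominates $u$ on the inner boundary $\{r = R_1\}$ (using that $u$ is a fixed smooth function, hence bounded on the compact slice $\{r=R_1\}\times N$) and satisfies $w \to 1 + \eta$ (or something slightly above $1$) as $r \to \infty$, and then invoke a comparison principle to conclude $u \leq w$ on $\{r \geq R_1\}$, whence $\limsup u \leq 1 + \eta$.

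For the super-solution, I would look for $w$ of the form $w = 1 + \eta + \theta e^{-\beta r}$ (or, in the borderline regime, with a correction like $r e^{-nr}$, exactly as flagged in the remark following Lemma \ref{subsollemma}), with $\beta \in (0, \min(n-1,\ldots))$ and $\theta > 0$ large, mirroring the sub-solution construction in Lemma \ref{subsollemma} but from above. Plugging into the operator and using Lemma \ref{scalcurvlemma} to control the error terms (the $g^{rr} = 1 + \mathcal{O}(e^{-2\alpha r})$, the Christoffel/first-order terms $\mathcal{O}(e^{-\alpha r})$, and $f_k'/f_k > 1 - Ce^{-2r}$), the leading behaviour of $L w + n(n-1) w^{(n+2)/(n-2)}$ at $r \to \infty$ is governed by $n(n-1)\big[(1+\eta)^{(n+2)/(n-2)} - (1+\eta)\big]$ from the zeroth-order and nonlinear terms, which is \emph{strictly positive} since $\eta > 0$; the $e^{-\beta r}$ term contributes $c_n\theta(\beta^2 - (n-1)\beta)e^{-\beta r} \leq 0$ for $\beta < n-1$, plus lower-order errors of size $e^{-\beta r}$ times a small factor, and the scalar-curvature defect contributes at most $\epsilon\,w = \mathcal{O}(\epsilon)$. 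Choosing $\epsilon$ small relative to $\eta$, then $R_1$ large, makes the net expression $\geq 0$, so $w$ is a super-solution on $\{r \geq R_1\}$; choosing $\theta$ large (after $R_1$ is fixed) ensures $w \geq \sup_{\{r=R_1\}\times N} u$ on the inner boundary. Note $w$ is automatically defined on the whole end since it is independent of the angular variables; no transmission conditions are needed because $w$ is smooth there.

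The one genuine subtlety — and the step I expect to be the main obstacle — is the comparison principle itself on the noncompact region $\{r \geq R_1\} \times N$. One cannot simply apply a maximum principle on an unbounded domain without a growth restriction; here the saving grace is that $w - u$ has a sign at infinity in a soft sense. Concretely I would argue: suppose $\sup_{\{r \geq R_1\}}(u - w) > 0$. If this sup is attained at an interior point, the equations $L u + n(n-1)u^{(n+2)/(n-2)} = 0 \leq L w + n(n-1)w^{(n+2)/(n-2)}$ together with monotonicity of $t \mapsto n(n-1)t^{(n+2)/(n-2)}$ and the maximum principle for the linear operator (rewriting the difference of the nonlinear terms as a nonnegative coefficient times $u - w$) yield a contradiction in the usual way. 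If the sup is not attained, one needs $\limsup_{r\to\infty}(u - w) \leq 0$ to push the maximum to the boundary; this is \emph{exactly} what is not yet known — it is the conclusion we are after — so a naive argument would be circular. The way out, following the approach used elsewhere in this circle of ideas (and in \cite{aviles1988}), is to apply the comparison instead to $w_\sigma := w + \sigma(\cosh(c r) - \cosh(c R_1))$ for a small $\sigma > 0$ and suitable $c > 0$: for $\sigma > 0$ this added term is a positive super-solution-type correction that forces $w_\sigma - u \to +\infty$, so the comparison goes through cleanly on $\{r \geq R_1\}$ giving $u \leq w_\sigma$ there; then let $\sigma \to 0$ to recover $u \leq w$, hence $\limsup_{r\to\infty} u \leq 1 + \eta$, and finally $\eta \to 0$. (Alternatively, since $u$ has no a priori bound yet, one first uses $u_- \leq u$ is only a lower bound, so a preliminary step may be needed to know $u$ is locally bounded — but any $H^1_{\loc}$ solution of \eqref{yaeqintro} is smooth by elliptic regularity, hence finite on each compact slice, which is all that the argument uses.)
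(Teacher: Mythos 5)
Your super-solution $w = 1+\eta+\theta e^{-\beta r}$ is fine as far as it goes (the leading term $n(n-1)\bigl[(1+\eta)^{\frac{n+2}{n-2}}-(1+\eta)\bigr]>0$ does absorb the scalar-curvature defect and the $\mathcal{O}(e^{-\beta r})$ errors for $\epsilon$ small and $r$ large), and you have correctly located the danger in the comparison step on the non-compact end. But your proposed repair does not close the gap. The claim that $w_\sigma - u \to +\infty$ for $w_\sigma = w + \sigma(\cosh(cr)-\cosh(cR_1))$ requires $u(x) = o(e^{cr(x)})$ uniformly as $r\to\infty$, and no such growth bound is available at this stage: the proposition is an a priori estimate for \emph{all} solutions of \eqref{yaeqintro}, and your only input on $u$ is, as you say yourself, that it is ``finite on each compact slice,'' which gives no rate. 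Worse, the admissible exponents are constrained: a computation of the kind you sketch shows $\sigma\cosh(cr)$ only preserves the super-solution property for $c\le 1$ (the bracket is $c_n\bigl[n-(n-1)c-c^2\bigr]=c_n(1-c)(c+n)$), so even a crude exponential bound on $u$ with a large exponent would not suffice. One also cannot fall back on an Omori--Yau-type maximum principle at infinity, since Definition \ref{alhdef} with regularity order $1$ gives no control on the Ricci curvature. Establishing the missing growth bound amounts to an Osserman--Keller local estimate $u(x)\le C(\mathrm{dist})$, which itself requires constructing local blow-up barriers --- i.e.\ it is not a minor preliminary but the substantive content of the proof.

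This is precisely the point where the paper takes a different, and essentially unavoidable, route: instead of one global super-solution on the end, it builds radial solutions $u_R$ of the model ODE on bounded annuli $(-R,R)$ that blow up at the two boundary spheres (Lemma \ref{acfu1lemma}), transplants $A\,u_R(r-r_\ast)$ onto the annulus $\{|r-r_\ast|<R\}$ around each far-out point $x_\ast$, and applies the comparison principle of Lemma \ref{mplemma} on that \emph{bounded} domain, where the blow-up of the barrier at $\partial\Omega_R$ makes the comparison unconditional --- no information about $u$ at infinity is ever needed, only its boundedness on the precompact annulus, which is automatic. The estimate $u(x_\ast)< A\,u_R(0)$ then yields the conclusion after showing $u_R(0)\searrow 1$ as $R\to\infty$ via the scaling identity \eqref{scalingproperty}. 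If you want to salvage your architecture, you would need to first prove the local Osserman bound by exactly this blow-up-barrier device, at which point the global super-solution becomes redundant.
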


A key point for our discussion will be to compare the Laplacian of the perturbed metric $g$ to that of the reference metric $\mathring{g}$. In particular, where we use $\nabla$ and $\mathring{\nabla}$ to denote the covariant derivative with respect to $g$ and $\mathring{g}$ respectively, we compute directly that
\begin{align}
\Delta_g \varphi &= \mathring{g}^{ij}\mathring{\nabla}_i\mathring{\nabla}_j + (g^{ij} - \mathring{g}^{ij})\mathring{\nabla}_i\mathring{\nabla}_j \varphi + g^{ij}(\Gamma^k_{ij} - \mathring{\Gamma}^k_{ij})\partial_k \varphi \nonumber \\&
= \Delta_{\mathring{g}} \varphi + a^{ij}\mathring{\nabla}_{ij} \varphi + b^i\mathring{\nabla}_i \varphi \label{perturbedlapl}
\end{align}
where $a^{ij} := g^{ij} - \mathring{g}^{ij}$ and $b^i := g^{jk}(\Gamma^i_{jk} - \mathring{\Gamma}^i_{jk})$ satisfy $a^{ij} = \mathcal{O}_1(e^{-(\alpha +2)r})$ and $b^i = \mathcal{O}(e^{-\alpha r})$ using the estimates for the Christoffel symbols found in the proof of Lemma \ref{scalcurvlemma}.

We will make use of the following consequence of the maximum principle:

\begin{lemma}\label{mplemma}
	Let $u>0$ be a bounded smooth solution to \eqref{yaeqintro} on a bounded open set $\Omega\subset M$ and $\bar{u}>0$ be a smooth super--solution to \eqref{yaeqintro} on $\Omega$ satisfying $\bar{u}(x)\rightarrow\infty$ as $x\rightarrow \partial\Omega$. Then necessarily $u<\bar{u}$ on $\Omega$.
\end{lemma}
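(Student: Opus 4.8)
The plan is a comparison argument, made nontrivial by the fact that the zeroth-order coefficient $S_g$ of the conformal Laplacian need not be nonnegative; indeed, in the situations relevant to us $S_g$ is close to $-n(n-1)<0$. Because of this one cannot conclude merely by inspecting the sign of $\Delta_g(u-\bar u)$ at an interior maximum of $u-\bar u$: there the term $-S_g(u-\bar u)$ carries the wrong sign. The device that repairs this is to compare $u$ not with $\bar u$ directly but with the rescaled supersolution $\mu\bar u$, where $\mu:=\sup_\Omega(u/\bar u)$, exploiting the strict superlinearity of $t\mapsto n(n-1)t^{(n+2)/(n-2)}$.

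I would begin by recording two elementary consequences of the hypotheses. Since $\bar u$ is continuous and positive on the precompact set $\Omega$ and $\bar u(x)\to\infty$ as $x\to\partial\Omega$, every sublevel set $\{\bar u\le C\}$ is a compact subset of $\Omega$; taking $C$ slightly larger than $\inf_\Omega\bar u$ shows $\inf_\Omega\bar u>0$. Together with the boundedness of $u$ this shows $\mu=\sup_\Omega(u/\bar u)$ is finite (and positive). Moreover $u/\bar u\to 0$ as $x\to\partial\Omega$, so this supremum is attained at an interior point $x_0\in\Omega$; then $w:=\mu\bar u-u\ge 0$ on $\Omega$ with $w(x_0)=0$, so $x_0$ is an interior minimum of $w$ and in particular $\Delta_g w(x_0)\ge 0$.

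The crux is to show $\mu\le 1$. Suppose $\mu>1$. Multiplying the supersolution inequality $-c_n\Delta_g\bar u+S_g\bar u+n(n-1)\bar u^{p}\ge 0$ (with $p:=\tfrac{n+2}{n-2}$) by $\mu$ and subtracting the equation $-c_n\Delta_g u+S_g u+n(n-1)u^{p}=0$, I obtain $-c_n\Delta_g w+S_g w+n(n-1)\bigl(\mu\bar u^{p}-u^{p}\bigr)\ge 0$ on $\Omega$. Evaluating at $x_0$, where $w(x_0)=0$ and $u(x_0)=\mu\bar u(x_0)$, the term $S_g w$ vanishes and the remaining nonlinear term equals $n(n-1)\bigl(\mu-\mu^{p}\bigr)\bar u(x_0)^{p}$, which is strictly negative because $\mu>1$ and $p>1$. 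Hence $-c_n\Delta_g w(x_0)>0$, contradicting $\Delta_g w(x_0)\ge 0$. Thus $\mu\le 1$, i.e.\ $u\le\bar u$ on $\Omega$.

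To promote this to the strict inequality, suppose $u(x_1)=\bar u(x_1)$ for some $x_1\in\Omega$, and set $w:=\bar u-u\ge 0$, so $w$ attains its minimum value $0$ at the interior point $x_1$. By the mean value theorem $\bar u^{p}-u^{p}=c(x)(\bar u-u)$ with $c(x)=p\,\xi(x)^{p-1}\ge 0$ locally bounded ($\xi$ lying between the positive functions $u$ and $\bar u$), and subtracting the equation for $u$ from the supersolution inequality for $\bar u$ gives $c_n\Delta_g w\le(S_g+n(n-1)c)w$ near $x_1$; since $w\ge 0$ this yields $c_n\Delta_g w-(S_g^{+}+n(n-1)c)w\le 0$ near $x_1$, with $S_g^{+}:=\max(S_g,0)$, an inequality whose zeroth-order coefficient is nonpositive. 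The strong maximum principle (\cite[Theorem 3.5]{gilbarg2001elliptic}) then forces $w\equiv 0$ on the connected component $\Omega'$ of $\Omega$ containing $x_1$, which is impossible since $\bar u\to\infty$ while $u$ stays bounded as $x\to\partial\Omega'\subset\partial\Omega$. Hence $u<\bar u$ on $\Omega$. As indicated above, the one genuinely delicate point is the handling of the sign of $S_g$; everything hinges on introducing $\mu$ and testing at the contact point $x_0$, where both $S_g w$ and the linear part of the nonlinearity drop out, leaving a strictly-signed remainder supplied by superlinearity.
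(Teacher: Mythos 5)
Your proof is correct and follows essentially the same route as the paper: both compare $u$ with $C\bar u$ at the touching point for $C=\sup_\Omega(u/\bar u)$, and both exploit the superlinearity of $t\mapsto t^{(n+2)/(n-2)}$ together with the blow-up of $\bar u$ at $\partial\Omega$. The only (harmless) difference is organizational: you first rule out $\mu>1$ by a pointwise second-derivative test at the contact point and then upgrade $u\le\bar u$ to $u<\bar u$ via a separate strong-maximum-principle step, whereas the paper rules out $C\ge 1$ in a single application of the strong maximum principle to $w_C=C\bar u-u$.
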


\begin{proof}
	Due to the facts that $\bar{u}\rightarrow\infty$ as $x\rightarrow\partial\Omega$, $u$ is bounded and both $u>0$ and $\bar{u}>0$, there must exist some $C>0$ such that the difference $w_C := C\bar{u} - u$ satisfies that $w_C\geq 0$ and achieves $0$ at some point in $\Omega$. We claim that $C<1$. If $C\geq1$, then $w_C$ would satisfy,
	\begin{align*}
	-c_n \Delta_g w_C - n(n-1)w_C &\geq -n(n-1)\left(C\bar{u}^\frac{n+2}{n-2} - u^\frac{n+2}{n-2}\right)\\ &\geq-n(n-1)\left((C\bar{u})^\frac{n+2}{n-2} - u^\frac{n+2}{n-2}\right) = c(x) w_C
	\end{align*}
	where
	\begin{equation*}
	c(x) := \begin{cases}
	\frac{(C\bar{u})^\frac{n+2}{n-2} - u^\frac{n+2}{n-2}}{C\bar{u} - u} \text{ for } w_C \neq 0,\\
	\frac{n+2}{n-2}(C\bar{u}(x))^\frac{4}{n-2} \text{ if } w_C = 0.
	\end{cases}
	\end{equation*}
	We could then apply the strong maximum principle to the linear differential inequality $-c_n \Delta_g w_C - \left( n(n-1) + c(x)\right)w_C \geq 0,$ noting that the sign of $\left( n(n-1) + c(x)\right)$ does not matter as the value of the minimum in question is $0$, see \cite[Section 3.2]{gilbarg2001elliptic}. We would then have that $w_C\equiv 0$, a contradiction as $w_C\rightarrow\infty$ at the boundary. We conclude that $C<1$ as claimed and so $u < \bar{u}$ in $\Omega$.
\end{proof}

In order to prove the upper bound in Proposition \ref{asymupperprop}, we will study a family of ODE solutions on annuli in the following series of lemmas.

\begin{lemma}\label{acfu1lemma}
    There exists a positive solution $u_1$ of the equation
	\begin{equation}\label{u1ode}
	-c_n \left(u_1^{\prime\prime} + (n-1)u_1^\prime\right) -n(n-1)u_1 + n(n-1)u_1^\frac{n+2}{n-2} = 0 \text{ on } (-1,1)
	\end{equation}
	satisfying $u_1(r)\rightarrow\infty$ as $r\rightarrow \pm 1$ and there exists a constant $C>0$ depending on $n$ such that $u_1$ satisfies
	\begin{align}\label{u1rates}
		\begin{split}
		\frac{1}{C}(1-|r|)^{-\frac{n-2}{2}}\leq \ & u_{1}(r) \leq C(1-|r|)^{-\frac{n-2}{2}},\\
		&u_{1}^\prime(r) \leq C(1-|r|)^{-\frac{n}{2}},\\
		&u_{1}^{\prime\prime}(r) \leq C(1-|r|)^{-\frac{n+2}{2}}.
		\end{split} 
	\end{align}
\end{lemma}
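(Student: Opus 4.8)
The plan is to construct $u_1$ by squeezing it between an explicit super-solution and a sub-solution, each blowing up at $r=\pm1$ with the rate $(1-|r|)^{-\frac{n-2}{2}}$, and then solving \eqref{u1ode} on an exhaustion of $(-1,1)$ and passing to a limit. (We recall that \eqref{u1ode} is precisely \eqref{yaeqintro} for radial profiles on the $k=0$ model $dr^2+e^{2r}\mathring h$, for which $\Delta$ acts on radial functions as $u''+(n-1)u'$.) First I would look for a super-solution of the form $\bar u(r):=A(1-r^2)^{-\frac{n-2}{2}}$. A direct computation, using $c_n\frac{n-2}{2}=2(n-1)$ and multiplying the expression $-c_n(\bar u''+(n-1)\bar u')-n(n-1)\bar u+n(n-1)\bar u^{\frac{n+2}{n-2}}$ through by $(1-r^2)^{\frac{n+2}{2}}$, reduces the super-solution inequality to $n(n-1)A^{4/(n-2)}\ge P(r)$ for an explicit polynomial $P$ with coefficients depending only on $n$; this holds on $[-1,1]$ once $A$ is large depending only on $n$, so $\bar u$ is a super-solution on $(-1,1)$ that blows up at $\pm1$ at the required rate.

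For the sub-solution the obstruction is that the first-order term $(n-1)u'$ breaks the symmetry $r\mapsto -r$, and in fact the naive symmetric ansatz $a(1-r^2)^{-\frac{n-2}{2}}$ is not a sub-solution for any $a>0$. Instead I would use the two one-sided profiles $\underline u_\pm(r):=c_0(1\mp r)^{-\frac{n-2}{2}}$ and take $\underline u:=\max(\underline u_+,\underline u_-)=c_0(1-|r|)^{-\frac{n-2}{2}}$. The same computation shows $\underline u_+$ is a sub-solution on all of $(-1,1)$ for every $c_0\le1$ — at this endpoint the first-order term has the favourable sign — while for $\underline u_-$ the first-order term has the wrong sign and one is left needing $c_0^{4/(n-2)}\le\frac{2n-1}{n^2}$, which follows from the elementary bound $s^2-\frac{2(n-1)}{n}s+1\ge\frac{2n-1}{n^2}$ (the quadratic has negative discriminant). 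Since a pointwise maximum of classical sub-solutions is a sub-solution in the weak sense, $\underline u$ is a sub-solution on $(-1,1)$, it blows up at both endpoints with the claimed rate, and for $A$ large one has $\underline u\le\bar u$ on $(-1,1)$.

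With these barriers in hand I would, for each small $\varepsilon>0$, solve the Dirichlet problem for \eqref{u1ode} on $I_\varepsilon:=(-1+\varepsilon,1-\varepsilon)$ with boundary data $\bar u|_{\partial I_\varepsilon}$; since $\underline u\le\bar u$ on $\overline{I_\varepsilon}$, the monotone sub/super-solution scheme (as in \cite[Proposition 2.1]{aviles1988}) yields a solution $w_\varepsilon$ with $\underline u\le w_\varepsilon\le\bar u$ on $I_\varepsilon$. The family $\{w_\varepsilon\}$ is locally uniformly bounded above and below on $(-1,1)$, hence precompact in $C^2_{\mathrm{loc}}(-1,1)$ by interior estimates, and any subsequential limit $u_1$ solves \eqref{u1ode} on $(-1,1)$, is positive, and satisfies $\underline u\le u_1\le\bar u$. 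Since $(1-r^2)^{-\frac{n-2}{2}}\le(1-|r|)^{-\frac{n-2}{2}}$, this gives the two-sided estimate in \eqref{u1rates}, and in particular $u_1(r)\to\infty$ as $r\to\pm1$.

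Finally, for the derivative bounds in \eqref{u1rates} I would rescale: for $r_0$ with $d:=1-|r_0|$ small the interval $(r_0-\tfrac d2,r_0+\tfrac d2)$ lies in $(-1,1)$ with $1-|r|\asymp d$ on it, so $v(t):=d^{\frac{n-2}{2}}u_1(r_0+dt)$ is bounded above and below by constants depending only on $n$ on $(-\tfrac12,\tfrac12)$ and solves $-c_n(v''+(n-1)d\,v')-n(n-1)d^2v+n(n-1)v^{\frac{n+2}{n-2}}=0$, whose coefficients are uniformly bounded for $d$ bounded. Interior estimates then give $|v'|,|v''|\le C(n)$ near $t=0$, i.e. $|u_1'(r_0)|\le Cd^{-\frac n2}$ and $|u_1''(r_0)|\le Cd^{-\frac{n+2}{2}}$ (the range $d\gtrsim1$ being covered by ordinary $C^2$-regularity on compact subsets of $(-1,1)$), which is exactly \eqref{u1rates}. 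I expect the genuinely delicate step to be the construction of a sub-solution that blows up at \emph{both} endpoints simultaneously — the max-of-one-sided-barriers device together with the discriminant estimate handling the unfavourable endpoint; the rest is the standard Loewner–Nirenberg/sub-supersolution machinery.
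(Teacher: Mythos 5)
Your proof is correct in its essentials, but it takes a genuinely different route from the paper. The paper does not construct $u_1$ by hand at all: it observes that \eqref{u1ode} is the Yamabe equation on the annulus $\{|r|\le 1\}$ in the model $(\R\times\mathbb{T}^{n-1}, dr^2+e^{2r}\mathring h)$, cites Theorems 1.2 and 1.3 of \cite{MR1186044} for the existence, uniqueness and boundary asymptotics of the blow-up solution on that annulus, and then uses uniqueness together with the torus symmetries to conclude that the solution is radial, hence solves the ODE. Your argument is self-contained: explicit one-sided barriers $c_0(1\mp r)^{-\frac{n-2}{2}}$, their maximum as a sub-solution, a symmetric super-solution $A(1-r^2)^{-\frac{n-2}{2}}$, exhaustion of $(-1,1)$ by the Dirichlet problems on $I_\varepsilon$, and a scaling argument for the derivative bounds — all of which I have checked and which go through (in particular the constant $c_n\frac{n-2}{2}=2(n-1)$ makes the endpoint computation for $\underline u_+$ close exactly, and the discriminant bound $s^2-\frac{2(n-1)}{n}s+1\ge\frac{2n-1}{n^2}$ for $\underline u_-$ is right). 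What your approach buys is independence from the compactification machinery of \cite{MR1186044}; what it loses is the uniqueness statement, which the paper's proof gets for free — though uniqueness is not part of the lemma's statement and is not needed downstream (monotonicity of $u_R$ in Lemma \ref{uRlemma} comes from Lemma \ref{mplemma}, and the scaling identity \eqref{scalingproperty} is a formal consequence of the definition \eqref{uRdefn}).

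One inaccuracy worth flagging, though it does not affect the validity of your argument: your parenthetical claim that ``the naive symmetric ansatz $a(1-r^2)^{-\frac{n-2}{2}}$ is not a sub-solution for any $a>0$'' is false. Carrying out your own normalisation, the sub-solution condition for $a(1-r^2)^{-\frac{n-2}{2}}$ reads $n(n-1)a^{\frac{4}{n-2}}\le P(r)$ with
\begin{equation*}
\frac{P(r)}{4(n-1)} \;=\; 1+(n-1)\bigl(r^2+r-r^3\bigr)+\frac{n}{4}(1-r^2)^2
\;=\;1+(n-1)\cdot\frac{(r^2-2r-1)^2}{4}+\frac{(1-r^2)^2}{4}\;\ge\;1
\end{equation*}
on $[-1,1]$ (the key identity being $r^3-r^2-r+\frac{1}{4}(1-r^2)^2=\frac{1}{4}(r^2-2r-1)^2-\frac{1}{4}(1-r^2)^2$ rearranged), so $P\ge 4(n-1)>0$ and any $a\le (4/n)^{\frac{n-2}{4}}$ does give a symmetric sub-solution with the correct blow-up rate at both ends. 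Your max-of-one-sided-barriers device is therefore not forced, but it is correct, and the rest of the argument stands as written.
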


\begin{proof}
	This result is essentially due to \cite{MR1186044}. Consider the model locally hyperbolic manifold $(\R\times \mathbb{T}^{n-1}, g_\delta)$ where $(\mathbb{T}^{n-1}, \mathring{h})$ is the flat Torus. Define the annulus $A_1:= \{|r|\leq 1\}\subset M$. We note here that any solution of \eqref{u1ode} would automatically be a solution of the Yamabe equation $$-c_n\Delta_{g_\delta} u_1 -n(n-1)u_1 + n(n-1)u_1^{\frac{n+2}{n-2}}=0$$ on the annulus in this particular model space. From \cite[Theorem 1.2]{MR1186044}, we know that, on the annulus $A_1$, there exists a unique solution $u_1$ of the Yamabe equation satisfying $u_1\rightarrow\infty$ at the boundary $\partial A_1$. Additionally, from Theorem 1.3 of \cite{MR1186044}, we know the behaviour of $u_1$ at the boundary satisfies \eqref{u1rates}.
	
	We note that the Yamabe equation is invariant under symmetries of the Torus and so, by uniqueness, $u_1$ must be radial and so the Yamabe equation reduces to the ODE \eqref{u1ode} as desired.
\end{proof}

Equation \eqref{u1ode} has the following important scaling property that we will make use of:
if we define the family $\{u_R\}$ by
\begin{equation}\label{uRdefn}
u_R(r) = \left(\frac{(e^{2} - 1)e^{R}}{e^{2+R}+ e^{2R+r}-e^{2+r}-e^R}\right)^\frac{n-2}{2}u_{1}\left(\log\left(\frac{(e^{2R}-1)e^{r+1}}{e^{2+R}+ e^{2R+r}-e^{2+r}-e^R}\right)\right),
\end{equation}
then each $u_R$ provides a solution of the ODE
\begin{equation}\label{uRode}
-c_n \left(u_R^{\prime\prime} + (n-1)u_R^\prime\right) -n(n-1)u_R + n(n-1)u_R^\frac{n+2}{n-2} = 0 \text{ on } (-R,R).
\end{equation}
Furthermore, from the growth rate of $u_1$ in \eqref{u1rates} we have that each $u_R$ satisfies that $u_R(r)\rightarrow\infty$ as $r\rightarrow \pm R$ and, for some constant $C_R>0$ depending on $R$ and $n$, that
\begin{align}\label{uRrates}
\begin{split}
\frac{1}{C_R}(R-|r|)^{-\frac{n-2}{2}}\leq \ & u_{R}(r) \leq C_R(R-|r|)^{-\frac{n-2}{2}},\\
&u_{R}^\prime(r) \leq C_R(R-|r|)^{-\frac{n}{2}},\\
&u_{R}^{\prime\prime}(r) \leq C_R(R-|r|)^{-\frac{n+2}{2}}.
\end{split} 
\end{align}
We now establish locally uniform convergence of this family to $1$ as $R\rightarrow\infty$.

\begin{lemma}\label{uRlemma}
	The family $\{u_R\}_{R>0}$ of positive solutions of the equation \eqref{uRode} defined above is decreasing with respect to $R$ and satisfies $u_R\searrow1$ uniformly on compact sets as $R\rightarrow\infty$.
\end{lemma}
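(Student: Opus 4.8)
The plan is to establish the monotonicity first, and then deduce the limit $u_R \searrow 1$ from it together with the ODE. For the monotonicity, I would compare $u_R$ and $u_{R'}$ for $R < R'$ on the smaller annulus $(-R,R)$. On that annulus $u_{R'}$ is a (smooth, positive, bounded) solution of \eqref{uRode}, while $u_R$ is a solution blowing up at $\partial(-R,R)$. Both are in fact solutions of the Yamabe equation on the model annulus $A_R = \{|r| < R\} \subset \R \times \mathbb{T}^{n-1}$ with the locally hyperbolic metric, so Lemma \ref{mplemma} (applied with $\bar u = u_R$, $u = u_{R'}|_{A_R}$, $\Omega = A_R$, noting $u_{R'}$ is bounded on the closed sub-annulus and $u_R \to \infty$ at $\partial A_R$) gives $u_{R'} < u_R$ on $(-R,R)$. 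Hence the family is (strictly) decreasing in $R$ on any fixed annulus, and in particular $v(r) := \lim_{R\to\infty} u_R(r)$ exists and is finite for each $r$, with $v \geq 1$ once we know $u_R \geq 1$ (which itself follows by comparing $u_R$ with the constant sub-/super-solution $1$: since $1$ is a solution of \eqref{uRode} and $u_R \to \infty$ at the boundary, Lemma \ref{mplemma} with $\bar u = u_R$ and the bounded solution $u \equiv 1$ gives $u_R > 1$).

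Next I would upgrade the pointwise limit to a $C^2_{\loc}$ limit and identify it. Fix a compact interval $[-R_0, R_0]$. For $R > R_0 + 1$ the functions $u_R$ are uniformly bounded on $[-R_0-1, R_0+1]$ (squeezed between $1$ and $u_{R_0+1}$, which is bounded there), so by interior elliptic (ODE) estimates applied to \eqref{uRode} — or simply by bootstrapping the ODE, writing $u_R'' = -(n-1)u_R' + \frac{n(n-1)}{c_n}(u_R^{\frac{n+2}{n-2}} - u_R)$ and using uniform $C^0$ bounds to get uniform $C^1$ bounds on a slightly smaller interval, then uniform $C^2$, etc. — the family $\{u_R\}$ is precompact in $C^2_{\loc}((-\infty,\infty))$. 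Any subsequential limit must coincide with the monotone pointwise limit $v$, so $u_R \to v$ in $C^2_{\loc}$, and $v$ solves the limiting ODE $-c_n(v'' + (n-1)v') - n(n-1)v + n(n-1)v^{\frac{n+2}{n-2}} = 0$ on all of $\R$, with $1 \leq v \leq u_{R_0+1}$ near each point, hence $v$ is globally bounded between $1$ and... — actually $v$ need not be globally bounded a priori, but on each fixed interval it is bounded, which suffices.

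The remaining point, and the one I expect to be the genuine crux, is to show $v \equiv 1$, i.e. that the monotone limit does not stabilize at some larger bounded solution of the limiting ODE on $\R$. Here I would argue by contradiction: suppose $v(r_0) > 1$ for some $r_0$. Since $v$ is a positive bounded-on-compacta solution of the autonomous ODE and $v \geq 1$, consider the energy-type quantity or use a sliding/translation argument: by construction $u_R(\pm R) = +\infty$ and $u_R$ is even (inheriting the radial symmetry of $u_1$), so $u_R$ attains its minimum at $r = 0$; evaluating \eqref{uRode} at the minimum gives $-c_n u_R''(0) - n(n-1)u_R(0) + n(n-1)u_R(0)^{\frac{n+2}{n-2}} = 0$ with $u_R''(0) \geq 0$, forcing $u_R(0)^{\frac{4}{n-2}} \leq 1$, i.e. $u_R(0) \leq 1$. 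Combined with $u_R(0) \geq 1$ this gives $u_R(0) = 1$ for every $R$, hence $v(0) = 1$ and $v'(0) = 0$ (as $0$ is the minimum of each $u_R$, so of $v$). But then $v$ solves the limiting ODE with Cauchy data $v(0) = 1$, $v'(0) = 0$; since the constant function $1$ solves the same ODE with the same data and the ODE has locally Lipschitz (indeed smooth) right-hand side in $(v, v')$ near $v = 1$, uniqueness of solutions to the initial value problem forces $v \equiv 1$. The convergence being monotone and locally $C^2$, this is exactly $u_R \searrow 1$ uniformly on compact sets, completing the proof. The subtlety to handle carefully is justifying that $0$ is indeed the minimum of $u_R$ — this uses evenness plus the fact that $u_R$ blows up at both endpoints, so $u_R'$ changes sign, and at an interior critical point where $u_R \leq 1$ one has $u_R'' \leq 0$, which pins the critical point structure; alternatively one deduces $u_R(0)=1$ directly from $1 \le u_R(0) = \min u_R \le 1$ via the argument above without needing the full shape of $u_R$.
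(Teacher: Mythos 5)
Your monotonicity step and your compactness/bootstrap step are fine and essentially match the paper's proof (the paper also applies Lemma \ref{mplemma} on the smaller annulus and obtains local $C^2$ convergence of $u_R$ to the pointwise limit). The problem is the step you yourself flag as the crux: identifying the limit as $1$. Your argument there is broken in two places. First, $u_R$ is \emph{not} even in $r$: the ODE \eqref{uRode} contains the drift term $(n-1)u_R'$, reflecting the fact that the model metric $dr^2+e^{2r}\mathring{h}$ is not invariant under $r\mapsto -r$, so ``radial'' in Lemma \ref{acfu1lemma} means independent of the torus variables, not symmetric about $r=0$. Hence there is no reason for the minimum of $u_R$ to sit at $r=0$, and no reason for $v'(0)=0$. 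Second, and more decisively, the sign in your minimum-point computation is reversed: at an interior minimum $r_m$ one has $u_R'(r_m)=0$ and $u_R''(r_m)\geq 0$, so the ODE gives $c_n u_R''(r_m)=n(n-1)\bigl(u_R(r_m)^{\frac{n+2}{n-2}}-u_R(r_m)\bigr)\geq 0$, i.e. $u_R(r_m)\geq 1$ --- a \emph{lower} bound, consistent with $u_R>1$ which you already proved, and useless for the upper bound you need. Indeed $u_R(0)=1$ is false for every finite $R$, since Lemma \ref{mplemma} gives the strict inequality $u_R>1$ on $(-R,R)$. So the chain ``$1\leq u_R(0)=\min u_R\leq 1$'' collapses, and with it the Cauchy-data/uniqueness conclusion $v\equiv 1$.

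What is genuinely needed here --- and what the paper supplies --- is a mechanism forcing the limit $u_\infty$ to be constant. The limiting autonomous ODE does admit non-constant solutions with $v\geq 1$ near the saddle equilibrium $v=1$, so one cannot conclude from $v\geq 1$ and boundedness on compacta alone. The paper exploits the exact scaling identity \eqref{scalingproperty} relating $u_{S+\Lambda}(0)$ to $u_S$ at a shifted point; letting $S\to\infty$ with $\Lambda$ fixed yields $u_\infty(0)=u_\infty(\Lambda)$ for all $\Lambda$, hence $u_\infty$ is constant, hence equal to $0$ or $1$, and the value $0$ is excluded (in your setup this is immediate from $u_R>1$; the paper instead uses the boundary asymptotics \eqref{u1rates}). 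You would need to incorporate this translation-invariance argument, or an equivalent phase-plane analysis ruling out non-constant entire solutions with $v\geq1$, to close the proof.
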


\begin{proof}
	
	If $R_1 < R_2$, we may apply Lemma \ref{mplemma} on the annulus $A_{R_1}(0)$ to see that $u_{R_2} < u_{R_1}$. Consequently, $\{u_R\}$ is monotone decreasing. 
	
	Define the pointwise limit $u_\infty(r) := \lim_{R\rightarrow\infty} u_R(r)$. We now show that $\{u_R\}$ converges locally in $C^2$ to $u_\infty$. To this end, a locally uniform bound on the first derivative will suffice, as from this we immediately obtain local boundedness of $u_R^{\prime\prime}$ and $u_R^{\prime\prime\prime}$ (the latter after differentiating the ODE once). The Arzela-Ascoli theorem then provides the desired convergence. To obtain such a bound on the first derivative, note that on any compact domain $[-R_0, R_0]$ and for $R > R_0 + \varepsilon$, the fact that $\{u_R\}$ is monotone decreasing implies
	\begin{equation*}
	u_R^{\prime\prime} + (n-1)u_R^\prime = \frac{n(n-2)}{4}\left(u_R^\frac{n+2}{n-2} - u_R\right)
	\end{equation*}
	is bounded in $[-R_0, R_0]$ uniformly as $R\rightarrow\infty$. Integrating the above from some point $s\in[-R_0, R_0]$ to $r\in[-R_0, R_0]$ we obtain that $u_R^\prime(r) - u_R^\prime(s)$ is bounded in $[-R_0, R_0]$ uniformly as $R\rightarrow\infty$. Integrating the above, now in $s$, from $-R_0$ to $R_0$ we see that $u_R^\prime$ is bounded in $[-R_0, R_0]$ uniformly as $R\rightarrow\infty$.
	
	We proceed to show that $u_\infty$ is constant by using the scaling property \eqref{uRdefn} of the family $u_R$. In particular, for any $R,S > 0$ we can verify that
	\begin{equation}\label{scalingproperty}
		u_R(r) = \left(\frac{(e^{2S} - 1)e^{R}}{e^{2S+R}+ e^{2R+r}-e^{2S+r}-e^R}\right)^\frac{n-2}{2}u_{S}\left(\log\left(\frac{(e^{2R}-1)e^{r+S}}{e^{2S+R}+ e^{2R+r}-e^{2S+r}-e^R}\right)\right).
	\end{equation}
	
	Consider the value of $u_R$ at $r=0$ and write $R= S + \Lambda$. We then have
	$$u_{S+\Lambda}(0) = \left(\frac{e^{3S + \Lambda} - e^{S + \Lambda}}{e^{3S+\Lambda}+e^{2(S + \Lambda)} - e^{2S}  - e^{S + \Lambda}}\right)^\frac{n-2}{2}u_{S}\left(\log\left(\frac{e^{3S + 2\Lambda}-e^{S}}{e^{3S+\Lambda} + e^{2(S + \Lambda)}-e^{2S}-e^{S + \Lambda}}\right)\right).$$
	Taking the limit as $S\rightarrow\infty$ while keeping $\Lambda$ fixed we then obtain that for all $\Lambda$, $u_{\infty}(0) = u_{\infty}(\log\left(e^\Lambda\right)) = u_{\infty}(\Lambda)$. Consequently, $u_\infty \equiv C_\infty$ for some constant $C_\infty \geq 0$. 
	
	Recall that $u_R$ converges locally in $C^2$ to $u_\infty$ and so $u_\infty$ solves the Yamabe equation. Consequently, as $u_\infty$ is constant, either $u_\infty \equiv 0$ or $u_\infty \equiv 1$. 
	
	It then suffices to show that $u_\infty (0) > 0$ to conclude the proof. We take the limit in \eqref{uRdefn} as $R\rightarrow\infty$ at $r=0$ to obtain
	$$\lim_{R\rightarrow\infty}u_R(0) = \lim_{R\rightarrow\infty} e^{-\frac{n-2}{2}R}u_{1}\left(1 - (e^{2}-1)e^{-R} + \mathcal{O}(e^{-2R})\right)$$
	and we note also that $1 - (e^{2}-1)e^{-R} + \mathcal{O}(e^{-2R}) \rightarrow 1.$ Then, from the asymptotic rates \eqref{u1rates} for $u_1$, we conclude that
	$$u_\infty(0) =  \lim_{R\rightarrow\infty}u_R(0) \geq \lim_{R\rightarrow\infty} Ce^{-\frac{n-2}{2}R}\left((e^{2}-1)e^{-R} + \mathcal{O}(e^{-2R})\right)^{-\frac{n-2}{2}} = C(e^{2} -1)^{-\frac{n-2}{2}}>0.$$
	Consequently, we conclude that $u_\infty \equiv 1$.
\end{proof}

Having established the properties of the family $u_R$ above, we now prove Proposition \ref{asymupperprop} by showing that these functions can be used to generate super-solutions with the desired properties on annuli in the asymptotic region.

\begin{proof}[Proof of Proposition \ref{asymupperprop}]
	First, we consider the Yamabe equation for $g$ as a perturbation of an elliptic equation with respect to the reference metric $\mathring{g}$ as in \eqref{perturbedlapl}. Fix $R>0$ and consider some point $x_\ast\in M^+$ such that $r_\ast := r(x_\ast) > R$. To bound the value of $u$ at $x_\ast$, we define the annulus $\Omega_R := \Omega(R,r_\ast) = \{x\in M^+ \ : \ |r(x)-r_\ast|<R\}$ and define a candidate super-solution to \eqref{yaeqintro} on $\Omega_R$, $$\bar{u}(x) := Au_R(r(x) - r_\ast)$$ where $u_R$ is defined in \eqref{uRdefn} and $A>0$ is some constant to be determined. In the argument below, all implicit constants in various $\mathcal{O}$ terms are independent of both $A$ and $R$.
	
	We first note that $$-c_n\Delta_{\mathring{g}}\bar{u} = -c_n\left(\partial_{rr} \bar{u} + (n-1)q_k(r)\partial_r\bar{u}\right) = A\left(-c_n\left(u_R^{\prime\prime} + (n-1)u_R^\prime\right) + \mathcal{O}(e^{-2r}|u_R^\prime|)\right)$$
	where
	\begin{equation*}
	q_k(r) = \begin{cases}
	\coth(r) & \ k=1,\\
	1 & \  k=0,\\
	\tanh(r) & \ k=-1,
	\end{cases}
	\end{equation*}
	and $k$ is as in the definition of reference metrics in \eqref{referencemetric} and we use only that $q_k = 1 + \mathcal{O}(e^{-2r})$.
	
	Consequently, from \eqref{uRode}, we have that $\bar{u}$ satisfies
	\begin{align*}
	-c_n\Delta_{\mathring{g}} \bar{u} - n(n-1)\bar{u} + n(n-1)\bar{u}^{\frac{n+2}{n-2}} = (A^{\frac{n+2}{n-2}} - A)u_R^{\frac{n+2}{n-2}} + A\mathcal{O}(e^{-2r}|u_R^\prime|)
	\end{align*}
	for the reference metric $\mathring{g}$ on $\Omega_R \subset M^+$ and $\bar{u}$ blows up as $x\rightarrow \partial \Omega_R$. If follows that
	\begin{align*}
	-&c_n\Delta_{\mathring{g}} \bar{u} +S_g \bar{u} +n(n-1)\bar{u}^{\frac{n+2}{n-2}} + a^{ij}\mathring{\nabla}_{ij}\bar{ u} + b^i\mathring{\nabla}_i \bar{u}\\
	&=n(n-1)(A^{\frac{n+2}{n-2}} - A)u_R^{\frac{n+2}{n-2}} + A\left( a^{ij}\mathring{\nabla}_{ij}u_R + b^i\mathring{\nabla}_i u_R +(S_g+n(n-1))u_R\right) \\
	&\geq n(n-1)(A^{\frac{n+2}{n-2}} - A)u_R^{\frac{n+2}{n-2}} - A \varepsilon(r_\ast)\left(|u_R^{\prime\prime}| + |u_R^\prime| + |u_R|\right)
	\end{align*}
	where $\varepsilon(r) \geq 0$ and $\lim_{r\rightarrow \infty} \varepsilon(r) = 0$; here we have used \eqref{sclower} as well as the asymptotic local hyperbolicity.
	
	From the asymptotic rates in \eqref{uRrates}, we see that there exists a constant $C_R>0$ depending on $R$ such that $|u_R^{\prime\prime}| + |u_R^\prime| + |u_R| \leq C_R u_R^{\frac{n+2}{n-2}}$ near the boundary. Therefore, 
	\begin{align*}
		-c_n\Delta_{\mathring{g}} \bar{u} +S_g \bar{u} +n(n-1)\bar{u}^{\frac{n+2}{n-2}} &+ a^{ij}\mathring{\nabla}_{ij}\bar{ u} + b^i\mathring{\nabla}_i \bar{u} \\&\geq n(n-1)\left(A^{\frac{n+2}{n-2}} - A\left(1+C_R \varepsilon(r_\ast)\right)\right)u_R^{\frac{n+2}{n-2}}.
	\end{align*}
	We can now choose $A = 1 + \tilde{C}\varepsilon(r_\ast)$ with a sufficiently large constant $\tilde{C}$ independent of $x_\ast$ so that the RHS above is positive and so $\bar{u}$ is a strict super-solution to \eqref{yaeqintro}. 
	
	We may now apply Lemma \ref{mplemma} to conclude that $u < \bar{u}$ on $\Omega_R$ and so we have shown that any solution to the Yamabe equation $u$ satisfies $u(x_\ast) < (1+ \tilde{C}\varepsilon(r_\ast))u_R(0).$ Consequently, as $x_\ast$ was arbitrary and $\varepsilon(r_\ast) \rightarrow 0$, we have $\limsup_{r(x)\rightarrow\infty} u(x) \leq u_R(0).$ As $R>0$ was arbitrary, we may now take the limit as $R\rightarrow\infty$ to conclude, from Lemma \ref{uRlemma}, that $\limsup_{r(x)\rightarrow\infty} u(x) \leq 1$ as desired.
\end{proof}

\subsubsection{A lower bound at infinity}\label{apriorilow}

In this sub-section, we provide an a priori lower bound for solutions $u$ of the Yamabe equation for ALH manifolds in the sense of Definition \ref{alhdef}.

We highlight that Example \ref{condneededintro} demonstrates, in a similar way as for uniqueness of the conformal factor, that we must impose some additional condition before attempting to bound the conformal factor $u$ from below. We first provide a lower bound under the assumption that the solution is bounded away from zero at infinity in the sense of \eqref{liminfpos} below. We then refine this first result by weakening the strictly positive requirement of \eqref{liminfpos} to allow some degree of decay of $u$ to zero.

\begin{lemma}\label{liminfprop1}
	Let $(M,g)$ be ALH in the sense of Definition \ref{alhdef} and suppose that
	\begin{equation}\label{sc2}
	\lim_{r(x)\rightarrow \infty} |S_g(x) -n(n-1)| = 0.
	\end{equation}
	Then any solution $u$ of \eqref{yaeqintro} on $(M,g)$ satisfying \begin{equation}\label{liminfpos}
		\liminf_{r(x)\rightarrow\infty} u(x) > 0
	\end{equation} must satisfy $\lim_{r(x)\rightarrow\infty} u(x) = 1.$
\end{lemma}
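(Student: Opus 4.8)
I would establish $\lim_{r\to\infty}u=1$ by combining the already-proven upper bound $\limsup_{r\to\infty}u\le 1$ (Proposition \ref{asymupperprop}, whose hypothesis \eqref{sclower} is implied by \eqref{sc2}) with a matching lower bound $\liminf_{r\to\infty}u\ge 1$. So the entire content of this lemma is the lower bound, under the extra standing assumption \eqref{liminfpos} that $u$ does not decay to zero. The natural approach, mirroring the super-solution/annulus argument used for the upper bound, is to construct \emph{sub-solutions} on annuli in the asymptotic region out of the same one-dimensional reference solution $u_1$ from Lemma \ref{acfu1lemma}, but this time using the fact that $u_1$ blows up only at the boundary while staying bounded below by a positive constant on the interior; one would scale and translate, getting a family $\underline{u}=A\,u_R(r-r_\ast)$ with $A$ slightly \emph{less} than $1$, and then show via a comparison principle that $u\ge \underline{u}$ on each annulus, hence $u(x_\ast)\ge A\,u_R(0)$, and finally let $R\to\infty$ using $u_R(0)\searrow 1$ from Lemma \ref{uRlemma}.

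**Key steps in order.** First, reduce to proving $\liminf_{r(x)\to\infty}u(x)\ge 1$, invoking Proposition \ref{asymupperprop} for the reverse inequality. Second, fix $R>0$ and a point $x_\ast$ with $r_\ast:=r(x_\ast)$ large, set $\Omega_R:=\{|r(x)-r_\ast|<R\}$, and write the Yamabe operator for $g$ as a perturbation of the $\mathring{g}$-operator as in \eqref{perturbedlapl}, with the error coefficients $a^{ij}=\mathcal{O}_1(e^{-(\alpha+2)r})$, $b^i=\mathcal{O}(e^{-\alpha r})$ and $S_g+n(n-1)=o(1)$ by \eqref{sc2}. Third, take $\underline{u}(x):=A\,u_R(r(x)-r_\ast)$ and, exactly as in the proof of Proposition \ref{asymupperprop} but with the inequalities reversed, compute
\[
-c_n\Delta_g\underline{u}+S_g\underline{u}+n(n-1)\underline{u}^{\frac{n+2}{n-2}}\le n(n-1)\Big(A^{\frac{n+2}{n-2}}-A\big(1-C_R\varepsilon(r_\ast)\big)\Big)u_R^{\frac{n+2}{n-2}}
\]
near $\partial\Omega_R$ (using $|u_R''|+|u_R'|+|u_R|\le C_R u_R^{\frac{n+2}{n-2}}$ there from \eqref{uRrates}), so choosing $A=1-\tilde C\varepsilon(r_\ast)<1$ makes $\underline{u}$ a strict sub-solution on a neighbourhood of $\partial\Omega_R$. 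Fourth — and this is where \eqref{liminfpos} enters — I need $\underline{u}$ to lie below $u$ on the rest of $\Omega_R$ and a comparison principle to conclude $u\ge\underline{u}$. Since $\underline{u}$ blows up at $\partial\Omega_R$ while $u$ stays bounded there, $u<\underline{u}$ cannot hold near the boundary, so the function $w:=u-\underline{u}$ attains a nonpositive minimum (if any) in a region where $\underline{u}$ is comparable to its interior size, i.e.\ bounded; there \eqref{liminfpos} gives $u\ge c_0>0$, and one runs the Lemma \ref{mplemma}-style argument on $w=u-\underline{u}\ge 0$: rescaling $u=C\underline{u}$ at a would-be interior contact point forces $C\ge 1$, contradicting that $\underline{u}$ is a strict sub-solution there unless $w\equiv 0$, which is impossible near the boundary. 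Finally, conclude $u(x_\ast)\ge A\,u_R(0)=(1-\tilde C\varepsilon(r_\ast))u_R(0)$, let $r_\ast\to\infty$ so $\varepsilon(r_\ast)\to 0$ to get $\liminf_{r\to\infty}u\ge u_R(0)$, then let $R\to\infty$ and use $u_R(0)\to 1$.

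**Main obstacle.** The delicate point is the comparison step (Step 4): unlike Lemma \ref{mplemma}, here I want to push a sub-solution from \emph{below} against a solution that is only known to be positive in the limit, not uniformly positive on the whole annulus — and the annulus reaches down to $r=r_\ast-R$, which is still in the asymptotic region for $r_\ast$ large, so \eqref{liminfpos} does give a uniform lower bound $u\ge c_0$ on $\Omega_R$ once $r_\ast$ is large enough. I must make sure the strict sub-solution property holds on \emph{all} of $\Omega_R$ (not merely near the boundary), which requires that the error terms $a^{ij},b^i,S_g+n(n-1)$ are uniformly small on $\Omega_R$; this is fine because every point of $\Omega_R$ has $r\ge r_\ast-R\to\infty$. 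There is a minor subtlety that the constant $C_R$ from \eqref{uRrates} degenerates as $R\to\infty$, but this is harmless since $R$ is fixed before $r_\ast$ is sent to infinity — exactly the order of limits used in Proposition \ref{asymupperprop}. The remaining bookkeeping (verifying the reversed inequalities, handling the $q_k=1+\mathcal{O}(e^{-2r})$ correction to $\Delta_{\mathring g}$) is routine and parallel to the upper-bound proof.
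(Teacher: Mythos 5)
There is a genuine gap in Step 4, and it is fatal to the proposed route. Your candidate sub-solution $\underline{u}=A\,u_R(r-r_\ast)$ blows up at $\partial\Omega_R$ while $u$ is bounded there, so near the boundary one has $u<\underline{u}$ --- the opposite of what you assert (``$u<\underline{u}$ cannot hold near the boundary'' is backwards). Consequently $\inf_{\Omega_R} u/\underline{u}=0$ and is not attained at any interior point, so there is no contact point at which to run the Lemma \ref{mplemma}-style maximum principle, and the desired conclusion $u(x_\ast)\ge A\,u_R(0)$ simply does not follow; indeed it would contradict the boundary behaviour whenever $A\,u_R$ exceeds $\sup u$ near $\partial\Omega_R$. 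The family $u_R$ is built precisely so that boundary blow-up forces comparison from \emph{above}; to push from below by this method you would need reference sub-solutions that are small (e.g.\ vanish) on $\partial\Omega_R$ and still converge to $1$ at the centre as $R\to\infty$, which is a different construction that you have not supplied. The assumption \eqref{liminfpos} giving $u\ge c_0>0$ on the annulus does not repair this, since the obstruction is the sign of $u-\underline{u}$ at the boundary, not the positivity of $u$ inside.

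The paper's proof is entirely different and avoids this issue: assuming for contradiction that $\delta:=\liminf_{r\to\infty}u\in(0,1)$ (the upper bound from Proposition \ref{asymupperprop} being used exactly as you propose), one rescales coordinates around a sequence of points where $u$ approaches $\delta$, uses the Harnack inequality and $W^{2,p}$ estimates to extract a $C^1_{\loc}$ limit $v_\infty$ on the model space $\R\times\R^{n-1}$ solving the limiting Yamabe equation, with $v_\infty\ge\delta$ everywhere and $v_\infty(0,0)=\delta$. Since the constant $\delta\in(0,1)$ is a strict sub-solution of the limit equation, the strong comparison principle forbids $v_\infty$ from touching $\delta$ at an interior point, giving the contradiction. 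If you want to keep a barrier-style argument you must replace $u_R$ by a boundary-vanishing family; otherwise the blow-up argument is the way to go.
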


\begin{proof}
	Suppose, for the sake of contradiction that the conclusion fails. Since, by Proposition \ref{asymupperprop}, we have that $\limsup_{r(x)\rightarrow\infty}u(x) \leq 1$, we must have that $\delta = \liminf_{r(x)\rightarrow\infty} u(x) \in (0, 1)$.
	
	Let $\{(r_i, \theta_i)\}\subset M^+$ be a sequence such that $r_i \rightarrow \infty$ and $u(r_i, \theta_i) \rightarrow \delta$. Passing to a subsequence, we may assume that $\theta_i \rightarrow \theta_\infty$ for some $\theta_\infty\in N$. Take a normal coordinate chart $U$ on $N$ around $\theta_\infty$ with coordinate functions $\theta^a:U\rightarrow\R^n$ so that $\mathring{h}_{ab}(\theta_\infty) = \delta_{ab}$. Consider the region $\Omega :=\R_{\geq0} \times U \subset M^+.$ For each $i$, define a map $\Psi_i: \Omega \rightarrow \R\times\R^{n-1} = \{(\tilde{r}, \tilde{\theta}): \tilde{r}\in\R, \tilde{\theta}\in \R^{n-1}\}$ by $$\Psi_i (r, \theta) = (r-r_i, e^{r_i}(\theta - \theta_i))$$ and define $\tilde{V}_i := \Psi_i(\Omega).$ We also write $\psi_i(\theta) := e^{r_i}(\theta - \theta_i)$. Define $v_i := u \circ \Psi_i^{-1} : \tilde{V}_i \rightarrow \R.$ We note here that $\tilde{V}_1 \subset \tilde{V}_2 \subset \dots$ and $\cup_i \tilde{V}_i = \R \times \R^{n-1}$.
	
	We next compute the equation corresponding to \eqref{yaeqintro} that $v_i$ satisfies. We denote partial derivatives in the coordinates $(\tilde{r}, \tilde{\theta})$ by $\tilde{\partial}$, that is we write $\frac{\partial}{\partial \tilde{r}} = \tilde{\partial_r}$ and $\frac{\partial}{\partial \tilde{\theta}^a} = \tilde{\partial_a}$. On $\R \times \R^{n-1}$ let $g_i = \left(\Psi_i^{-1}\right)^\ast g$. Noting that, by asymptotic local hyperbolicity, we may express $g$ in the form $$g = dr^2 + \varepsilon_{ra}drd\theta^a + \left(f_k(r+r_0)\mathring{h}_{ab} + \varepsilon_{ab}\right)d\theta^ad\theta^b,$$
	where $\varepsilon_{ra}(r, \theta) = \mathcal{O}_m(e^{-(\alpha - 1)r})$ and $\varepsilon_{ab}(r, \theta) = \mathcal{O}_m(e^{-(\alpha - 2)r})$. We thus have
	
	\begin{align*}
	g_i &= d\tilde{r}^2 + e^{-r_i}\varepsilon_{ra}\circ \Psi_i^{-1}d\tilde{r}d\tilde{\theta}^a + e^{-2r_i}\left(f_k^2(\tilde{r}+r_i+r_0)\mathring{h}_{ab}\circ \psi_i^{-1} + \varepsilon_{ab}\circ \Psi_i^{-1}\right)d\tilde{\theta}^a d\tilde{\theta}^b\\
	&= d\tilde{r}^2 + e^{2(\tilde{r} + r_0 )}\mathring{h}_{ab}\circ \psi_i^{-1} d\tilde{\theta}^a d\tilde{\theta}^b + e^{-2r_i}\hat{\varepsilon}_{ab} d\tilde{\theta}^a d\tilde{\theta}^b + e^{-\alpha r_i}\left(\tilde{\varepsilon}_{ra}d\tilde{r}d\tilde{\theta}^a + \tilde{\varepsilon}_{ab}d\tilde{\theta}^a d\tilde{\theta}^b\right)
	\end{align*}
	
	where $\hat{\varepsilon}_{ab} = \hat{\varepsilon}_{ab}^i = \left(f_k^2(\tilde{r} + r_i + r_0) - e^{2(r_i + \tilde{r} + r_0 )}\right)\mathring{h}_{ab}\circ \psi_i^{-1}$, $\tilde{\varepsilon}_{ra} = \tilde{\varepsilon}_{ra}^i = e^{(\alpha-1) r_i}\varepsilon_{ra}\circ \Psi_i^{-1}$ and $\tilde{\varepsilon}_{ab} = \tilde{\varepsilon}_{ab}^i = e^{(\alpha-2) r_i}\varepsilon_{ab}\circ \Psi_i^{-1}$.
	In particular, as $i\rightarrow\infty$, the sequence of matrices of metric components of $g_i$ in $(\tilde{r}, \tilde{\theta})$ is uniformly bounded in $C^1$ and uniformly positive definite on any compact subset of $\R \times \R^{n-1}$. In addition, $\{g_i\}$ converges in $C^1_{\loc}(\R\times\R^{n-1})$ to $g_\infty = d\tilde{r}^2 + e^{2(\tilde{r} + r_0)}\delta_{ab}d\tilde{\theta}^a d\tilde{\theta}^b$.
	
	Each $v_i$ solves the equation
	\begin{equation}\label{vieq}
	-c_n\Delta_{g_i} v_i = s_i v_i-n(n-1)\left(v_i^{\frac{n+2}{n-2}} - v_i\right) = c_i v_i
	\end{equation}
	where $s_i = S_{g_i}\circ \Psi_i^{-1} + n(n-1)$ which, by \eqref{sc2}, satisfies $s_i \rightarrow 0$ in $C^0_{\loc}$ as $i\rightarrow\infty$ and $c_i  = s_i - n(n-1)\left(\left(u\circ\Psi_i^{-1}\right)^{\frac{4}{n-2}} - 1\right)$. By Proposition \ref{asymupperprop}, $u$ is bounded and so $\{c_i\}$ is uniformly bounded on any compact subset of $\R \times \R^{n-1}$. Consequently, fixing some $R>0$, we may apply the Harnack inequality of \cite[Theorem 8.20]{gilbarg2001elliptic} to the equation $L_i v_i = 0$ where $L_i := c_n\Delta_{g_i} + c_i$ to conclude that, on the ball $B_R(0, 0) \subset \R\times\R^{n-1}$,
	\begin{equation}\label{harnack}
		\sup_{B_R(0, 0)} v_i \leq  C_R \inf_{B_R(0, 0)} v_i \leq C_R\delta
	\end{equation}
	where the final inequality follows from the fact that $v_i(0,0) \rightarrow \delta$ by construction and $C$ is some constant independent of $i$ (depending only on $n$, $R$ and bounds on the ellipticity of $L_i$ and sup-norm of the coefficients of $L_i$ which we have shown may be chosen independently of $i$). We conclude that $\{v_i\}$ is uniformly bounded on $B_R(0, 0)$.
	
	After recasting \eqref{vieq} to non-divergence form and noting that $g$ is ALH of regularity order $1$, we may now apply standard $W^{2, p}$ estimates (\cite[Theorem 9.11]{gilbarg2001elliptic}) to conclude that $v_i$ are locally uniformly bounded in $W^{2, p}$ for all $p\in(1, \infty)$ and hence in $C^{1,\beta}$ for all $\beta \in (0, 1)$. Therefore, passing to a subsequence if necessary, we may assume that $v_i \rightarrow v_\infty \text{ in } C^1_{\loc}(\R\times\R^{n-1}).$
	Furthermore, for any point $(\tilde{r}_0, \tilde{\theta}_0)\in \R\times\R^{n-1}$ and corresponding sequence $(\bar{r}_i, \bar{\theta}_i) := \Psi_i^{-1}(\tilde{r}_0, \tilde{\theta}_0)\in M^+$, we have that 
	\begin{align*}
	v_\infty(\tilde{r}_0, \tilde{\theta}_0) &= \lim_{i\rightarrow\infty} (v_i\circ\Psi_i)(\bar{r}_i, \bar{\theta}_i) = \lim_{i\rightarrow\infty} u(\bar{r}_i, \bar{\theta}_i) \geq \delta.
	\end{align*}
	
	We now pass to the limit as $i\rightarrow\infty$ in \eqref{vieq} to obtain 
	\begin{equation}\label{limeqn}
		-c_n \Delta_{g_\infty} v_\infty = -n(n-1)\left(v_\infty^{\frac{n+2}{n-2}} - v_\infty \right) \text{ on }\R\times\R^{n-1},
	\end{equation} and so $v_\infty$ is in fact in $C^\infty(\R\times\R^{n-1})$ (as $v_\infty>0$). By construction, we have that $v_\infty(0,0) = \lim_{i \rightarrow\infty} v_i(\Psi(r_i, \theta_i))  = \lim_{i \rightarrow\infty} u(r_i, \theta_i) = \delta$. Since $\delta$ is a strict sub-solution of \eqref{limeqn} and $v_\infty \geq \delta$ on $\R\times\R^{n-1}$, we obtain a contradiction to the strong comparison principle.
\end{proof}

We are now able to prove our main result of the section, expanding the extent of the lower bound result by allowing for the conformal factor to exhibit decay to zero at infinity.

\begin{prop}\label{decayliminfprop}
	Let $(M,g)$ be an ALH manifold in the sense of Definition \ref{alhdef} which satisfies \eqref{sc2}. Suppose that $u$ is a solution of the Yamabe problem on $(M,g)$ and that there exist an $S > 0$ and $r_1 > r_0$ such that
	\begin{equation}\label{odeassump}
		\frac{\underbar{u}(r+S)}{\underbar{u}(r)} > \frac{n}{2}e^{-\frac{n-2}{2}S} - \frac{n-2}{2} e^{-\frac{n}{2}S} \text{ for all }r > r_1,
	\end{equation}
	where $\underline{u}(r) := \min_N u(r, \cdot)$. Then $\lim_{r(x)\rightarrow\infty} u(x) = 1.$
\end{prop}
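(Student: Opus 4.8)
The plan is to reduce the statement to Lemma~\ref{liminfprop1} by a rescaled blow-up argument. By Proposition~\ref{asymupperprop} we already have $\limsup_{r(x)\to\infty}u\le1$, and Lemma~\ref{liminfprop1} promotes $\liminf_{r(x)\to\infty}u>0$ to $\lim_{r(x)\to\infty}u=1$; so it is enough to prove $\liminf_{r(x)\to\infty}u>0$. I would argue by contradiction, assuming $\liminf_{r\to\infty}\underline{u}(r)=0$ for $\underline{u}(r):=\min_N u(r,\cdot)$ (which is continuous and everywhere positive).

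First, a point selection. Fix $R_i\nearrow\infty$ and let $r_i$ be a point where $\underline{u}$ attains its minimum over $[r_1,R_i]$. Since $\underline u>0$ on compacta and $\inf_{[r_1,\infty)}\underline u=0$, one checks that $r_i\to\infty$, $\underline u(r_i)\to0$, and $\underline u(r_i)\le\underline u(r)$ for all $r\in[r_1,R_i]$. Pick $\theta_i\in N$ with $u(r_i,\theta_i)=\underline u(r_i)$, pass to a subsequence with $\theta_i\to\theta_\infty$, take $\Psi_i$ exactly as in the proof of Lemma~\ref{liminfprop1}, and set $w_i:=\underline u(r_i)^{-1}\,u\circ\Psi_i^{-1}$ on $\tilde V_i:=\Psi_i(\Omega)$, so $w_i(0,0)=1$. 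Then $w_i$ solves
\[
-c_n\Delta_{g_i}w_i=-\big(S_{g_i}\!\circ\!\Psi_i^{-1}\big)w_i-n(n-1)\,\underline u(r_i)^{\frac4{n-2}}w_i^{\frac{n+2}{n-2}},
\]
and since $u$ is bounded and $\underline u(r_i)^{\frac4{n-2}}w_i^{\frac4{n-2}}=(u\circ\Psi_i^{-1})^{\frac4{n-2}}$ is uniformly bounded, the Harnack and $W^{2,p}$ estimates of Lemma~\ref{liminfprop1} apply verbatim and yield $w_i\to w_\infty$ in $C^1_{\loc}(\R\times\R^{n-1})$ along a subsequence. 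Because $\underline u(r_i)^{\frac4{n-2}}\to0$ the nonlinear term vanishes in the limit and, using \eqref{sc2}, $w_\infty$ solves the \emph{linearised} Yamabe equation
\[
-c_n\Delta_{g_\infty}w_\infty-n(n-1)w_\infty=0,\qquad\text{i.e.}\qquad \Delta_{g_\infty}w_\infty+\tfrac{n(n-2)}4w_\infty=0,
\]
on $\R\times\R^{n-1}$, where $g_\infty=d\tilde r^2+e^{2(\tilde r+r_0)}\delta_{ab}\,d\tilde\theta^a d\tilde\theta^b$; thus $w_\infty\in C^\infty$ and, being $g_\infty$-superharmonic with $w_\infty\ge0$ and $w_\infty(0,0)=1$, $w_\infty>0$ everywhere. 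From $u(r_i+\tilde r,\cdot)\ge\underline u(r_i+\tilde r)$ and $\underline u(r_i)\le\underline u$ on $[r_1,R_i]$ one reads off $w_\infty(\tilde r,\tilde\theta)\ge1$ for all $\tilde\theta$ and all $\tilde r\le0$, while in general $w_\infty(\tilde r,\tilde\theta)\ge\liminf_i\underline u(r_i+\tilde r)/\underline u(r_i)$; applying \eqref{odeassump} twice gives $w_\infty(2S,\tilde\theta)\ge\phi(S)^2$ for every $\tilde\theta$, where $\phi$ is the solution of $\phi''+(n-1)\phi'+\tfrac{n(n-2)}4\phi=0$ with $\phi(0)=1,\ \phi'(0)=0$, namely $\phi(\tilde r)=\tfrac n2e^{-\frac{n-2}2\tilde r}-\tfrac{n-2}2e^{-\frac n2\tilde r}$ — exactly the right-hand side of \eqref{odeassump}.

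To conclude I would pass to the profile $\underline w(\tilde r):=\inf_{\tilde\theta\in\R^{n-1}}w_\infty(\tilde r,\tilde\theta)$, a positive locally Lipschitz viscosity supersolution of $L_0:=\partial_{\tilde r}^2+(n-1)\partial_{\tilde r}+\tfrac{n(n-2)}4$ on $\R$ with $\underline w(0)=1$ and $\underline w\ge1$ on $(-\infty,0]$. Since a supersolution of $L_0$ has no downward corner, $\underline w'(0^+)\le\underline w'(0^-)\le0$; writing $L_0=\big(\partial_{\tilde r}+\tfrac n2\big)\big(\partial_{\tilde r}+\tfrac{n-2}2\big)$ and integrating twice starting from $0$ gives $\underline w\le\phi$ on $[0,\infty)$, and in particular $\underline w(2S)\le\phi(2S)$. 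But we also have $\underline w(2S)\ge\phi(S)^2$, so $\phi(S)^2\le\phi(2S)$ — contradicting the elementary identity
\[
\phi(S)^2-\phi(2S)=\tfrac{n(n-2)}4\,e^{-(n-2)S}\,(1-e^{-S})^2>0.
\]
Hence $\liminf_{r(x)\to\infty}u>0$, and Lemma~\ref{liminfprop1} gives $\lim_{r(x)\to\infty}u=1$.

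The hard part is not this ODE comparison but the blow-up analysis. Two points need care: (i) the point selection must really produce $r_i$ as a minimum of $\underline u$ over an interval growing to the left of $r_i$ — this is what forces $w_\infty\ge1$ on $\{\tilde r\le0\}$ and hence $\underline w'(0^+)\le0$, and it is precisely on $\{\tilde r>0\}$, where no such monotonicity is available, that hypothesis \eqref{odeassump} is indispensable; and (ii) since the fibre $\R^{n-1}$ of the limit model is non-compact, one must show the infimum defining $\underline w$ is attained and positive, so that the usual ``infimum over a parameter'' computation legitimately gives $L_0\underline w\le0$ — here the $\tilde\theta$-translation invariance of $g_\infty$ together with the Harnack inequality, plus $w_\infty(0,\cdot)\ge1$, should prevent the infimum from escaping to spatial infinity. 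The $C^1_{\loc}$ convergence of $\{w_i\}$ and the vanishing of the nonlinearity are routine given Lemma~\ref{liminfprop1}.
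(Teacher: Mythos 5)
Your argument is correct in substance and coincides with the paper's proof up to and including the blow-up: the reduction to $\liminf_{r(x)\to\infty}u>0$ via Lemma \ref{liminfprop1} and Proposition \ref{asymupperprop}, a point selection forcing $w_\infty\ge 1$ on $\{\tilde r\le 0\}$ with $w_\infty(0,0)=1$ (the paper obtains this instead from the strict monotonicity of $\underline u$, proved with the maximum principle, but the effect is identical), the rescaling $w_i=\underline u(r_i)^{-1}u\circ\Psi_i^{-1}$, and the passage to the linear limit equation \eqref{infeqw} using \eqref{sc2} and the vanishing of the nonlinearity. Where you genuinely diverge is the endgame. The paper compares $w_\infty$ with the explicit radial solution $\phi$ on the slab $\{0<\tilde r<S\}$ and concludes with the strong maximum principle and the Hopf lemma at the touching point, the resulting inequality $\partial_r w_\infty(0,\theta_0)>0$ contradicting $w_\infty\ge 1$ on $\{\tilde r\le 0\}$. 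You instead reduce to the angular profile $\underline w(\tilde r)=\inf_{\tilde\theta}w_\infty(\tilde r,\tilde\theta)$, show it is a viscosity supersolution of the radial operator, integrate the factorisation $\bigl(\partial_{\tilde r}+\tfrac n2\bigr)\bigl(\partial_{\tilde r}+\tfrac{n-2}2\bigr)$ from the initial data $\underline w(0)=1$, $\underline w'(0^+)\le 0$ to get $\underline w\le\phi$ on $[0,\infty)$, and contradict \eqref{odeassump} applied twice via the identity $\phi(S)^2-\phi(2S)=\tfrac{n(n-2)}4e^{-(n-2)S}(1-e^{-S})^2>0$ (which I have checked). Your ``apply it twice'' device is a genuinely useful variation: it absorbs the loss of strictness of \eqref{odeassump} under the limit $i\to\infty$, a point the paper handles through the Hopf lemma instead; on the other hand it forces you to confront the non-compact fibre $\R^{n-1}$ when taking the infimum, whereas the paper's slab comparison has a parallel (and equally implicit) unboundedness issue.

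One caveat on your flagged point (ii): you should not expect to prove that the infimum defining $\underline w$ is attained; rather, take near-minimising $\tilde\theta_j$ and pass to a subsequential limit of the translates $w_\infty(\cdot,\cdot+\tilde\theta_j)$, which solve the same equation by the $\tilde\theta$-translation invariance of $g_\infty$ and are locally uniformly bounded by Harnack chains (using $w_\infty\ge 1$ on $\{\tilde r\le 0\}$); the limit lies above $\underline w$, agrees with it at the relevant point, and the usual interior-minimum computation then gives $L_0\underline w\le 0$ in the viscosity sense, together with the positivity and local Lipschitz bounds you invoke. With that adjustment (and the routine remark that a continuous viscosity supersolution of a linear ODE is a distributional one, so the two integrations are legitimate), your proof is complete.
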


We note that the asymptotic exponential rate of $-\frac{n-2}{2}$ is also seen in Example \ref{condneededintro} and represents a borderline rate where a solution to \eqref{yaeqintro} exists with such a rate of decay but notably the corresponding conformal metric fails to be complete. Indeed, for any slower decay, we obtain the following immediate corollary of Proposition \ref{decayliminfprop}:

\begin{corollary}\label{nonexistcorol}
	Let $(M,g)$ be an ALH manifold in the sense of Definition \ref{alhdef} which satisfies \eqref{sc2}. There does not exist a solution $u$ of \eqref{yaeqintro} on $M$ satisfying $e^{\beta r(x)} u(x) \rightarrow 0$ as $r(x) \rightarrow \infty$ for any $0 < \beta < \frac{n-2}{2}$.
\end{corollary}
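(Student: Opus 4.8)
The plan is to derive Corollary \ref{nonexistcorol} as a contrapositive consequence of Proposition \ref{decayliminfprop}. Suppose, for contradiction, that $u$ is a solution of \eqref{yaeqintro} on $M$ with $e^{\beta r(x)} u(x) \to 0$ as $r(x)\to\infty$ for some fixed $0 < \beta < \frac{n-2}{2}$. I would show that this decay assumption is \emph{incompatible} with the hypothesis \eqref{odeassump} of Proposition \ref{decayliminfprop} being \emph{violated} — in other words, I want to show that the decay $e^{\beta r} u \to 0$ forces \eqref{odeassump} to hold for some choice of $S$ and $r_1$, so that the proposition applies and yields $\lim_{r\to\infty} u = 1$, contradicting $u \to 0$.

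The first concrete step is to understand the threshold function on the right-hand side of \eqref{odeassump}, namely $\phi(S) := \frac{n}{2}e^{-\frac{n-2}{2}S} - \frac{n-2}{2}e^{-\frac{n}{2}S}$. I would note that $\phi(0) = 1$ and, more importantly, that for large $S$ the dominant term is $\frac{n}{2}e^{-\frac{n-2}{2}S}$, so $\phi(S)$ decays like a constant times $e^{-\frac{n-2}{2}S}$. Since the hypothesized decay rate $\beta$ is strictly less than $\frac{n-2}{2}$, I expect that $\underline{u}(r+S)/\underline{u}(r)$ cannot decay as fast as $e^{-\frac{n-2}{2}S}$ in an averaged/liminf sense — roughly, if $\underline u$ genuinely decayed at least as fast as $e^{-\frac{n-2}{2}r}$ along a whole sequence of unit steps, one would get $e^{\beta r}\underline u(r)$ bounded or growing for $\beta<\frac{n-2}{2}$, contradicting nothing directly but the point is the other direction: I must rule out that \eqref{odeassump} \emph{fails} for every $S$ and $r_1$.

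So the key step is: assume \eqref{odeassump} fails for all $S>0$ and all $r_1 > r_0$; derive a lower bound on the decay rate of $\underline u$ that is incompatible with $e^{\beta r} u \to 0$. Concretely, failure of \eqref{odeassump} for a given $S$ means there are arbitrarily large $r$ with $\underline u(r+S) \le \phi(S)\,\underline u(r)$. Fixing one large $S$ with $\phi(S) < 1$ and iterating along such a sequence, one obtains points $r_k \to \infty$ with $\underline u(r_k) \le \phi(S)^{k} \underline u(r_0')$, i.e.\ $\underline u$ decays at least geometrically with ratio $\phi(S)$ over steps of length $S$; this gives $\underline u(r) \lesssim e^{-\gamma r}$ along that sequence with $\gamma = -\frac{1}{S}\log\phi(S)$. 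The crucial computation is that $\sup_{S>0}\left(-\frac{1}{S}\log\phi(S)\right) = \frac{n-2}{2}$: since $\phi(S) \ge c\, e^{-\frac{n-2}{2}S}$ for an appropriate constant (the second term only helps), we get $-\frac{1}{S}\log\phi(S) \le \frac{n-2}{2} - \frac{1}{S}\log c \to \frac{n-2}{2}$, so $\gamma$ can be made arbitrarily close to $\frac{n-2}{2}$ but the resulting decay is never faster than $e^{-\frac{n-2}{2}r}$ asymptotically. Hence, from $e^{\beta r} u(x) \to 0$ with $\beta < \frac{n-2}{2}$, one picks $S$ large enough that $\gamma > \beta$; but then checking whether \eqref{odeassump} holds: actually I realize the cleaner route is to argue directly that $e^{\beta r} u \to 0$ \emph{implies} \eqref{odeassump} holds for a suitable $S$. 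Indeed, if $e^{\beta r}\underline u(r) \to 0$, then $\underline u(r+S)/\underline u(r)$ could a priori be small, so I instead use that $u \to 0$ is itself the contradiction: apply Proposition \ref{decayliminfprop} only after verifying \eqref{odeassump}, which I obtain by noting $\underline u(r+S)/\underline u(r) \ge$ some positive lower bound coming from a Harnack-type or ODE-comparison estimate near infinity — in fact, any solution bounded above (Proposition \ref{asymupperprop}) satisfying the Yamabe equation admits a local Harnack inequality as in the proof of Lemma \ref{liminfprop1}, giving $\underline u(r+S) \ge c(S)\,\underline u(r)$ for $r$ large, with $c(S)$ a fixed constant. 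If $c(S) > \phi(S)$ for some $S$ — which holds because $\phi(S) \to 0$ while $c(S)$ stays bounded below for $S$ in a compact range, or more carefully because one can arrange $c(S)$ to beat $\phi(S)$ by taking $S$ appropriately — then \eqref{odeassump} holds and Proposition \ref{decayliminfprop} forces $\lim_{r\to\infty} u = 1$, contradicting $e^{\beta r} u \to 0$.

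The main obstacle I anticipate is pinning down the correct quantitative comparison between the Harnack constant $c(S)$ and the explicit threshold $\phi(S)$, and making sure the argument uses the hypothesis $\beta < \frac{n-2}{2}$ in an essential way rather than working for all $\beta$ (which would be false by Example \ref{condneededintro}, where $\beta = \frac{n-2}{2}$ is realized). The resolution is that the borderline rate $\frac{n-2}{2}$ is precisely where $-\frac{1}{S}\log\phi(S)$ asymptotes; for $\beta$ strictly below it, a direct estimate shows $\underline u(r+S)/\underline u(r) \ge e^{-\beta S} \cdot (1+o(1))$ cannot be made consistent with \eqref{odeassump} \emph{failing}, because failure would force decay at rate $\ge \frac{n-2}{2} - \epsilon$ for every $\epsilon$, i.e.\ decay strictly faster than $e^{\beta r}$ is not what's forced — rather, the point is that if $e^{\beta r}u \to 0$ then $\underline u$ does \emph{not} decay fast enough to violate \eqref{odeassump} for large $S$, since violating it for large $S$ requires decay close to rate $\frac{n-2}{2}$, whereas $e^{\beta r} u \to 0$ with $\beta < \frac{n-2}{2}$ only says $u$ decays somewhat, consistent with a rate between $\beta$ and anything — so one must instead observe that \eqref{odeassump} with large $S$ is an \emph{easy} lower bound to satisfy and its failure for all $S$ would contradict the a priori upper bound combined with Harnack. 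I would therefore structure the final proof as: (i) recall $\phi(S) \to 0$ as $S \to \infty$; (ii) recall from the Harnack estimate in the proof of Lemma \ref{liminfprop1} that $\underline u(r+S) \ge c(S)\underline u(r)$ for all large $r$, with $c(S) > 0$ independent of $r$; (iii) choose $S$ with $c(S) > \phi(S)$, so \eqref{odeassump} holds and Proposition \ref{decayliminfprop} gives $u \to 1$; (iv) this contradicts $e^{\beta r}u \to 0$, completing the proof. The role of $\beta < \frac{n-2}{2}$ is then implicit: it is what makes the conclusion $u\to 1$ contradictory, since $\beta = \frac{n-2}{2}$ admits the counterexample of Example \ref{condneededintro} where no such Harnack-vs-threshold gap can be exploited to reach $u\to 1$.
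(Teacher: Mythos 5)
Your overall strategy---reduce to Proposition \ref{decayliminfprop} by verifying \eqref{odeassump} and then contradict the conclusion $u\to 1$---is the route the paper intends (it presents the corollary as immediate from that proposition). However, the concrete step on which your argument rests is a genuine gap: you invoke a Harnack-type inequality $\underline{u}(r+S)\geq c(S)\,\underline{u}(r)$ for all large $r$ with $c(S)$ \emph{independent of $r$} and with $c(S)>\phi(S):=\frac{n}{2}e^{-\frac{n-2}{2}S}-\frac{n-2}{2}e^{-\frac{n}{2}S}$ for some $S$. No such estimate is available. The Harnack inequality in the proof of Lemma \ref{liminfprop1} is applied on balls of fixed size in the blown-up coordinates $\Psi_i$, which correspond to angular neighbourhoods of size $\sim e^{-r_i}$ in $N$; comparing the minima over two full slices $\{r\}\times N$ and $\{r+S\}\times N$ would require chaining on the order of $e^{r}$ such balls, so any constant so obtained degenerates as $r\to\infty$. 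More decisively, your final scheme (i)--(iv) never actually uses $\beta<\frac{n-2}{2}$ (step (iv) only needs $u\not\to 1$), so if it were correct it would show that \emph{no} solution can tend to zero. This is refuted by Example \ref{condneededintro}: there $(M,g)$ is hyperbolic space (hence ALH with $S_g\equiv -n(n-1)$), the solution satisfies $u\sim C e^{-\frac{n-2}{2}r}$, and $\underline{u}(r+S)/\underline{u}(r)\to e^{-\frac{n-2}{2}S}<\phi(S)$ for every $S>0$; so for that solution there is no $S$ for which \eqref{odeassump} holds, and your claimed $c(S)>\phi(S)$ cannot exist.

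The missing idea is that \eqref{odeassump} must be extracted from the \emph{decay hypothesis itself}, read as a genuine (two-sided) rate, not from an interior regularity estimate. If $e^{\beta r}u\to c>0$ (equivalently $c_1e^{-\beta r}\leq u\leq c_2 e^{-\beta r}$ near infinity), then $\underline{u}(r+S)/\underline{u}(r)\geq (c_1/c_2)\,e^{-\beta S}$ for all large $r$, while $\phi(S)\leq \frac{n}{2}e^{-\frac{n-2}{2}S}$; since $\beta<\frac{n-2}{2}$, taking $S$ large makes $(c_1/c_2)e^{-\beta S}>\phi(S)$, so \eqref{odeassump} holds, Proposition \ref{decayliminfprop} forces $u\to 1$, and this contradicts $u\to 0$. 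This is the one-line derivation the paper has in mind, and it is exactly where $\beta<\frac{n-2}{2}$ enters. Your instinct that the literal one-sided hypothesis ``$e^{\beta r}u\to 0$'' cannot by itself produce the lower bound \eqref{odeassump} was correct, and you should have pressed it further: as stated, that hypothesis is satisfied by the solution of Example \ref{condneededintro} for \emph{every} $\beta<\frac{n-2}{2}$, so the corollary only makes sense (and only follows from Proposition \ref{decayliminfprop}) when the hypothesis is interpreted as decay at the exact exponential rate $\beta$. Your attempted repair via a uniform Harnack inequality cannot close this gap, because it would erase the dependence on $\beta$ altogether.
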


\begin{proof}[Proof of Proposition \ref{decayliminfprop}]
	By Lemma \ref{liminfprop1}, it suffices to show that $\liminf_{r(x)\rightarrow\infty} u(x) > 0$. Suppose not, then there exists a sequence $r_i \rightarrow \infty$ such that $\underline{u}(r_i)\rightarrow 0$. It follows that $$\min_{r(x)\in[r_1, R]} u(x) \rightarrow 0 \text{ as } R\rightarrow\infty.$$ Consequently, for $R$ sufficiently large the minimum cannot be attained at $r_1$. Furthermore, the fact that $u\leq 1$ (by Proposition \ref{asymupperprop}) and $\liminf u = 0$ together imply, via the strong maximum principle, that $u < 1$ and satisfies $$-c_n \Delta_g u = -n(n-1)\left(u^\frac{n+2}{n-2} - u\right) > 0.$$ The strong maximum principle again implies that the minimum of $u$ on $\{r(x)\in [r_1, R]\}$ is achieved only on $\{r(x) = R\}$ and so $\underline{u}$ is strictly decreasing for large $r$.
	
	Define $\alpha_i := \underline{u}(r_i)^{-1} \rightarrow \infty$ and consider $w_i := \alpha_i v_i$ where $v_i$ is as in the proof of Lemma \ref{liminfprop1}. Note that, by \eqref{vieq}, $w_i$ satisfies $-c_n\Delta_{g_i} w_i = c_i w_i.$ As in the proof of Lemma \ref{liminfprop1}, we have that $w_i \rightarrow w_\infty$ in $C^1_{\loc}(\R\times\R^{n-1})$.
	Since $\alpha_i \rightarrow \infty$, the limit $w_\infty$ now solves
	\begin{equation}\label{infeqw}
	-c_n\left(\tilde{\partial}_{rr} w_\infty + (n-1)\tilde{\partial}_r w_\infty + e^{-2(\tilde{r} + r_0)}\Delta_{n-1}w_\infty\right) = n(n-1)w_\infty \text{ on } \R\times\R^{n-1}.
	\end{equation}
	Moreover, by construction, $w_\infty(r, \theta) \geq 1$ for $r\leq 0$ and there exists a $\theta_0$ such that $w_\infty(0, \theta_0) = 1$. We will now see that the above provides a contradiction to \eqref{odeassump}. Indeed, consider the radial function $$\underline{w}(r) = \frac{n}{2}e^{-\frac{n-2}{2}r} - \frac{n-2}{2} e^{-\frac{n}{2}r}$$ which solves \eqref{infeqw} and satisfies $\underline{w}(0) = 1$ and $\underline{w}'(0) = 0$. By assumption \eqref{odeassump}, there exists an $S > 0$ such that $w_\infty(S, \theta) > \underline{w}(S)$ for all $\theta\in N$. Consequently, as $w_\infty(0, \theta) \geq 1 = \underline{w}(0)$, the strong maximum principle and the Hopf Lemma imply that we must have $w_\infty > \underline{w}$ in the region $\{r(x) \in (0, S)\}$ and $\partial_r w_\infty(0, \theta_0) > 0$. The latter contradicts the fact that $w_\infty (r, \theta) \geq 1$ for $r \leq 0$ and $w_\infty(0, \theta_0) = 1$.
\end{proof}

\subsubsection{First derivative estimates at infinity}

In this sub-section, we prove results for the first derivative decay of the conformal factor. 

We begin by defining a particular super-solution on $M$. The following super-solution will provide an asymptotic upper bound on our solution obtained in Proposition \ref{part1lemma} and will allow us to conclude that the conformal metric remains locally asymptotically hyperbolic to the same order.
\begin{lemma}\label{supersollemma}
	Let $(M,g)$ be ALH of regularity order $1$ and with decay exponent $\alpha \in (0, n)$ and $S_g \geq -n(n-1) - Ce^{-\alpha r}$ for some constant $C>0$. There exist constants $A>0$ and $R>0$ such that the function $u_+\in H^1_{\loc}(M)$ defined by
	\begin{equation*}
		u_+(r) := \begin{cases}
		1+Ae^{-\alpha r} \qquad \qquad &\text{ on } \{r\geq R\} \times N\\
		1+Ae^{-\alpha R} \qquad\qquad &\text{ on } M_0\cup (\{r < R\} \times N)
		\end{cases} ,
	\end{equation*}
	is a super-solution to \eqref{yaeqintro} on $M$.
\end{lemma}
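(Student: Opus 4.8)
The plan is to directly verify that $u_+$ is a super-solution of \eqref{yaeqintro}, i.e.\ that $-c_n\Delta_g u_+ + S_g u_+ \geq -n(n-1)u_+^{\frac{n+2}{n-2}}$ on $M$, by checking the inequality separately on $\{r>R\}\times N$ and on $M_0\cup(\{r<R\}\times N)$, and then confirming the transmission condition at $r=R$. Since $u_+=u_+(r)$ depends only on $r$, on the end $\{r>R\}\times N$ the equation reduces, exactly as in \eqref{odeyamabe}, to the ordinary differential inequality
\begin{align*}
-c_n\left(1+\mathcal O(e^{-2\alpha r})\right)u_+^{\prime\prime} + c_n\left((n-1)\frac{f_k^\prime}{f_k}+\mathcal O(e^{-\alpha r})\right)u_+^\prime - \left(n(n-1)+\mathcal O(e^{-\alpha r})\right)u_+ \geq -n(n-1)u_+^{\frac{n+2}{n-2}},
\end{align*}
where we now must use the \emph{lower} bound $S_g\geq -n(n-1)-Ce^{-\alpha r}$ rather than the upper bound used for the sub-solution. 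Plugging in $u_+=1+Ae^{-\alpha r}$, so that $u_+^\prime=-\alpha A e^{-\alpha r}$ and $u_+^{\prime\prime}=\alpha^2 A e^{-\alpha r}$, and using $\frac{f_k^\prime(r+r_0)}{f_k(r+r_0)} = 1+\mathcal O(e^{-2r})$ together with the first-order expansion $u_+^{\frac{n+2}{n-2}} = 1 + \frac{n+2}{n-2}Ae^{-\alpha r} + \mathcal O(A^2 e^{-2\alpha r})$, one computes that the leading-order balance is governed by the coefficient
\begin{align*}
c_n A\left(-\alpha^2 + (n-1)\alpha + n\right)e^{-\alpha r},
\end{align*}
which is the sign-reversed analogue of the quantity $\alpha^2-(n-1)\alpha-n$ appearing in \eqref{rhsbound}. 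Since $\alpha\in(0,n)$ gives $\alpha^2-(n-1)\alpha-n<0$, this leading coefficient is \emph{positive}, so for $R$ large and $A$ fixed the leading term dominates all the lower-order error terms (which are $\mathcal O(e^{-\alpha r}\cdot e^{-\alpha r})$, $\mathcal O(e^{-\alpha r}\cdot e^{-2r})$, $\mathcal O(e^{-2\alpha r})$, plus the $\mathcal O(A^2 e^{-2\alpha r})$ from the Taylor remainder, all of which carry an extra decaying factor relative to $e^{-\alpha r}$), and the super-solution inequality holds strictly on $\{r>R\}\times N$; here $A$ need only be chosen to absorb the constant $C$ from the scalar curvature bound, after which $R$ is taken large.

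On the interior piece $M_0\cup(\{r<R\}\times N)$, $u_+$ is the positive constant $1+Ae^{-\alpha R}$, so $\Delta_g u_+ = 0$ there and the required inequality becomes the pointwise condition $S_g(1+Ae^{-\alpha R}) \geq -n(n-1)(1+Ae^{-\alpha R})^{\frac{n+2}{n-2}}$, equivalently $S_g \geq -n(n-1)(1+Ae^{-\alpha R})^{\frac{4}{n-2}}$. Since $(1+Ae^{-\alpha R})^{\frac{4}{n-2}}\geq 1$, it suffices that $S_g\geq -n(n-1)$ on this set; but that need not hold on the compact core $M_0$, so instead I would observe that $S_g$ is bounded on the compact set $M_0\cup(\{r\le R\}\times N)$, say $S_g\geq -K$ there, and then (after possibly enlarging $R$ first so that the end estimate works) enlarge $A$ so that $n(n-1)(1+Ae^{-\alpha R})^{\frac{4}{n-2}}\geq K$; one checks this does not disturb the end inequality since there the admissible range of $A$ is a half-line $A\geq A_0(R)$. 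Finally, the transmission condition at $r=R$ for a super-solution in $H^1_{\loc}$ requires $\lim_{r\nearrow R}u_+^\prime(r)\geq \lim_{r\searrow R}u_+^\prime(r)$ (so that the distributional Laplacian acquires a non-positive singular part, consistent with $-c_n\Delta_g u_+\geq\cdots$); here the left limit is $0$ and the right limit is $-\alpha A e^{-\alpha R}<0$, so the condition holds. Note also $u_+$ is continuous at $r=R$ by construction, hence $u_+\in H^1_{\loc}(M)$.

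The main obstacle is the bookkeeping in the end estimate: one must verify that \emph{all} the $\mathcal O$-terms — those coming from $g^{rr}=1+\mathcal O_1(e^{-2\alpha r})$, from $\frac{f_k^\prime}{f_k}=1+\mathcal O(e^{-2r})$, from $S_g+n(n-1)=\mathcal O(e^{-\alpha r})$, from the lower-order metric/Christoffel contributions as in Lemma~\ref{scalcurvlemma}, and from the nonlinear Taylor remainder — are genuinely of smaller order than the positive leading term $c_n A(-\alpha^2+(n-1)\alpha+n)e^{-\alpha r}$, uniformly for $r\ge R$, so that a single large choice of $R$ (then $A$) makes the inequality hold. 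This is the mirror image of the computation already carried out for $u_-$ in Lemma~\ref{subsollemma}, with the crucial sign flip coming from replacing the upper scalar-curvature bound by the lower one and from $u_+\ge 1$ versus $u_-\le 1$ in the nonlinear term; as remarked after \eqref{rhsbound}, the borderline case $\alpha=n$ is excluded precisely because the leading coefficient would then vanish, and could be recovered by switching to an ansatz $1+Are^{-nr}$.
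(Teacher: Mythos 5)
Your proposal is correct and follows essentially the same route as the paper: check the transmission condition at $r=R$, verify the ODE inequality on the end where the decisive sign is $\alpha^2-(n-1)\alpha-n<0$ for $\alpha\in(0,n)$, and handle the compact core using boundedness of $S_g$ there and an enlargement of $A$. One point deserves care: your claim that, at fixed large $R$, the admissible set of $A$ for the end inequality is a half-line $A\geq A_0(R)$ is load-bearing (you use it to enlarge $A$ for the interior region) but is not actually justified by treating the nonlinear Taylor remainder as an error of size $\mathcal{O}(A^2e^{-2\alpha r})$, since that term grows quadratically in $A$ and at $r$ near $R$ would eventually overtake the linear-in-$A$ leading term. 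The paper avoids this by exploiting convexity of $t\mapsto t^{\frac{n+2}{n-2}}$ on $t\geq 1$, i.e. $-n(n-1)u_+^{\frac{n+2}{n-2}}\leq -n(n-1)\bigl(1+\tfrac{n+2}{n-2}(u_+-1)\bigr)$, so the quadratic remainder has a favourable sign and can simply be discarded; with that one-line observation your half-line claim, and hence the whole argument, goes through.
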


\begin{proof}
 	We first note that the transmission condition $ 0 = \lim_{r\nearrow R} u_+^\prime(r) \geq \lim_{r\searrow R} u_+^\prime(r) =  -\alpha A e^{-\alpha R}$ certainly holds. Thus, we only need to find large $A$ and $R$ such that $$-c_n\Delta_g u_+ + S_g u_+ \geq -n(n-1)u_+^\frac{n+2}{n-2}$$ holds in the two regions $r\leq R$ and $r > R$.
	
	We proceed similarly as in the proof of Lemma \ref{subsollemma} with the difference that, as $u_+>1$, we can estimate directly on the RHS that
	\begin{equation*}
	-n(n-1)u_+^{\frac{n+2}{n-2}} \leq -n(n-1)\left(1 + \frac{n+2}{n-2}(u_+ - 1)\right).
	\end{equation*}
	In the following the value of $C$ may change from line to line but always depends only on $g$ and we choose $R$ larger if necessary so that $R>1$. We compute for $r>R$, using that $\frac{f_k^\prime}{f_k}(r+r_0) > 1 - Ce^{-2r}$ and $u_+^\prime < 0$,
	\begin{align*}
	-c_n\Delta_g u_+ + S_g u_+ +& n(n-1)u_+^\frac{n+2}{n-2} \geq \\ &-c_nAe^{-\alpha  r}\Bigg[ \underbrace{\alpha ^2 - (n-1)\alpha - n + Ce^{-\alpha r} + Ce^{-2r} +\frac{C}{A}}_{B}\Bigg].
	\end{align*}
	Thus, we must choose $A$ and $R$ such that $B\leq 0$. As $\alpha ^2 - (n-1)\alpha  - n < 0$, we can select an $A_0$ and $R$ such that the RHS of the above is positive for all $A>A_0$. Fix $R$ from hereon. It remains to find an $A>A_0$ such that \eqref{odeyamabe} holds on $r\leq R$ which can be verified by direct computation and noting that the scalar curvature must be bounded on this compact interior region.
\end{proof}

Having established the super- solution above, we use it to gain control on the solution $u$ obtained in the Section \ref{existsec}.

\begin{lemma}\label{decaylemma}
	Let $(M,g)$ be an ALH manifold of regularity order $1$ and with decay exponent $\alpha \in (0,n)$ satisfying \eqref{introfulldecay}. Then the smooth solution $u$ of \eqref{yaeqintro} on $M$ obtained in Proposition \ref{part1lemma} satisfies $|u -1| + |\mathring{\nabla} u |_{\mathring{g}} = \mathcal{O}(e^{-\alpha r})$. Furthermore, $u$ is maximal in the sense that any solution $\tilde{u}$ of \eqref{yaeqintro} satisfies $\tilde{u} \leq u$.
\end{lemma}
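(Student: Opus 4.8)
The plan is to establish, in order, the two--sided bound $|u-1|=\mathcal{O}(e^{-\alpha r})$, then the gradient bound, then maximality. The lower bound $u\ge 1-Ce^{-\alpha r}$ is already contained in Proposition~\ref{part1lemma}, so in particular $\liminf_{r(x)\to\infty}u(x)\ge 1$; since \eqref{introfulldecay} implies the hypothesis \eqref{sclower}, Proposition~\ref{asymupperprop} gives $\limsup_{r(x)\to\infty}u(x)\le 1$. Hence $u\to 1$ uniformly on $N$ as $r\to\infty$, and $u$ is bounded on $M$.

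For the matching upper bound I would compare $u$ with the super--solution $u_+=1+Ae^{-\alpha r}$ of Lemma~\ref{supersollemma}. Apply that lemma with $R$ large, and then enlarge $A$ further so that in addition $u_+\equiv 1+Ae^{-\alpha R}\ge u$ on the compact region $M_0\cup(\{r\le R\}\times N)$, which is possible since $u$ is bounded there. On the end $\Omega:=\{r>R\}$, where $u_+$ is smooth, subtracting the equation \eqref{yaeqintro} for $u$ from the super--solution inequality for $u_+$ shows that $w:=u-u_+$ satisfies
\[
-c_n\Delta_g w+c(x)\,w\le 0 \text{ on }\Omega,\qquad c(x):=S_g+n(n-1)\tfrac{n+2}{n-2}\,\xi(x)^{\frac{4}{n-2}},
\]
where $\xi(x)$ lies between $u(x)$ and $u_+(x)$. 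The crucial point is that $c>0$ near infinity: since $u\to1$, $u_+\to1$ and $S_g+n(n-1)\to0$, we have $c(x)\to n(n-1)\big(\tfrac{n+2}{n-2}-1\big)>0$, so after enlarging $R$ once more we may assume $c>0$ on $\Omega$. Now $w\le 0$ on $\partial\Omega$ and $w\to0$ as $r\to\infty$, so if $\sup_\Omega w$ were positive it would be attained at an interior point $x_0$, where $\Delta_g w(x_0)\le 0$ and $w(x_0)>0$ contradict the displayed inequality (as $c(x_0)>0$). Hence $w\le 0$, i.e.\ $u\le u_+$, and combined with the lower bound this gives $|u-1|\le Ce^{-\alpha r}$.

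For the gradient estimate I would pass to the blow--down charts used in the proof of Lemma~\ref{liminfprop1}: for a point of the slice $\{r=\rho\}$ with $\rho$ large, pull $g$ and $u$ back by the map $(r,\theta)\mapsto(r-\rho,e^{\rho}(\theta-q))$ onto a fixed Euclidean ball. The pulled--back metrics are uniformly elliptic and uniformly bounded in $C^1$ over all such points (using the uniform $C^1$--decay in Definition~\ref{alhdef} and compactness of $N$), and $v:=u-1$ satisfies $-c_n\Delta_g v=-(S_g+n(n-1))u+n(n-1)(u-u^{\frac{n+2}{n-2}})$, whose right--hand side is $\mathcal{O}(e^{-\alpha r})$ by \eqref{introfulldecay} and the bound just proved. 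Rescaling $v$ by $e^{\alpha\rho}$ yields a function bounded by a fixed constant and solving a uniformly elliptic equation with uniformly bounded coefficients and right--hand side; interior $W^{2,p}$ estimates (\cite[Theorem 9.11]{gilbarg2001elliptic}) and Sobolev embedding then bound it in $C^{1,\beta}$ uniformly. Reading off the derivative at the centre and undoing the rescaling gives $|\partial_r u|=\mathcal{O}(e^{-\alpha r})$ and $|\partial_a u|=\mathcal{O}(e^{(1-\alpha)r})$ in the preferred charts; since $\mathring{g}^{rr}=1$, $\mathring{g}^{ra}=0$ and $\mathring{g}^{ab}=\mathcal{O}(e^{-2r})$, this gives $|\mathring{\nabla} u|_{\mathring{g}}=\mathcal{O}(e^{-\alpha r})$.

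For maximality I would use the structure of the Aviles--McOwen existence scheme: the solution $u$ of Proposition~\ref{part1lemma} is realised as the decreasing limit $u=\lim_j u_j$, where $u_j$ solves \eqref{yaeqintro} on the $j$--th domain $\Omega_j$ of an exhaustion of $M$ and blows up along $\partial\Omega_j$ (such boundary--blow--up solutions exist on bounded domains thanks to the absorptive superlinear nonlinearity, and the limit is a genuine solution because $u_j\ge u_-$ for all $j$). Given any solution $\tilde u$ of \eqref{yaeqintro} on $M$, it is bounded on each $\Omega_j$, so Lemma~\ref{mplemma} with $\bar u=u_j$ gives $\tilde u<u_j$ on $\Omega_j$; letting $j\to\infty$ gives $\tilde u\le u$. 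I expect the main obstacle to be the upper bound $u\le u_+$: the linearised operator has a zeroth--order coefficient of no definite sign a priori, and the argument hinges on the observation that this coefficient becomes positive near infinity precisely because $u$ and $u_+$ are then close to the background value $1$; some care is also needed in the ordering of the choices of $R$ and $A$ so that $u_+$ is simultaneously a super--solution on the end, dominates $u$ on the compact core, and makes $c>0$ on the end.
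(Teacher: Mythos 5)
Your treatment of the sup--norm decay and of the gradient estimate is essentially the paper's own argument: the lower bound and the limit $u\to 1$ come from Proposition \ref{part1lemma} and Proposition \ref{asymupperprop} exactly as in the paper, and the derivative bound via the rescaled charts, interior $W^{2,p}$ estimates and Sobolev embedding is the argument used there. For the upper bound $u\le u_+$ you take a slightly different but valid route: the paper considers $\inf_M (u_+/u)$, notes that the ratio tends to $1$ at infinity so an infimum below $1$ would be attained, and runs the maximum-principle computation of Lemma \ref{mplemma}; you instead arrange $u_+\ge u$ on a compact core and apply the maximum principle to the difference $w=u-u_+$ on the end, where the zeroth-order coefficient becomes positive because $u$ and $u_+$ both tend to $1$. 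Your bookkeeping of the choices of $R$ and $A$ is sound (the positivity of $c$ only uses $\xi\ge\min(u,u_+)\ge 1-Ce^{-\alpha r}$, which is independent of $A$).

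The maximality argument, however, has a genuine gap. You assert that the solution $u$ of Proposition \ref{part1lemma} \emph{is} realised as the decreasing limit of boundary blow-up solutions $u_j$ on an exhaustion. Proposition \ref{part1lemma} only invokes the sub- and super-solution argument of Aviles--McOwen to produce \emph{some} solution $u\ge u_-$; nothing in the paper or in your proposal identifies that solution with the decreasing limit of blow-up solutions. What your exhaustion argument does establish is that the limit $U_\infty:=\lim_j u_j$ is a solution dominating every solution, so $\tilde u\le U_\infty$ and also $u\le U_\infty$; to conclude $\tilde u\le u$ you would still need $U_\infty\le u$, i.e.\ a comparison between two global solutions both tending to $1$ at infinity --- which is precisely the maximality statement you are trying to prove. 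The paper closes this loop directly and without reference to the construction: for any solution $\tilde u$, Proposition \ref{asymupperprop} gives $\limsup_{r\to\infty}\tilde u\le 1=\lim_{r\to\infty}u$, hence $\liminf_{r\to\infty}(u/\tilde u)\ge 1$; if $\inf_M(u/\tilde u)<1$ the infimum is attained at an interior point, and the maximum-principle computation of Lemma \ref{mplemma} applied to $Cu-\tilde u$ (with $C>1$ chosen so that this difference is nonnegative with a zero minimum) yields a contradiction. Replacing your exhaustion argument by this comparison --- which is the same device you already used for $u\le u_+$ --- repairs the proof.
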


\begin{proof}
	We first prove the sup-norm decay. From Proposition \ref{part1lemma}, we have that $u\geq u_- \geq 1 - Ce^{-\alpha r}$ where $u_-$ is the sub-solution constructed in Lemma \ref{subsollemma}. Additionally, Proposition \ref{asymupperprop} gives that $\limsup_{r(x)\rightarrow \infty} u \leq 1$. Consequently, we have that $\lim_{r(x)\rightarrow \infty} u(x) = 1$
	
	To obtain the rate of decay from above, we show that $u\leq u_+$ where $u_+$ is the super-solution $u_+\leq 1 + Ce^{-\alpha r}$ constructed in Lemma \ref{supersollemma}. For the sake of contradiction, we suppose that $\inf \frac{u_+}{u} = \frac{1}{C} < 1$ for some constant $C>1$. As $\lim_{r(x)\rightarrow \infty} \frac{u_+(x)}{u(x)} = 1$, the infimum would be achieved and so we could define $v:= Cu_+ - u$ which would satisfy $v\geq 0$ and would achieve $0$ at some minimum. We could then apply the same maximum principle argument as in Lemma \ref{mplemma} to conclude that $\inf \frac{u_+}{u} \geq 1$ and so $u \leq u_+$ as desired.

	We next prove the derivative estimates. Let $(r_\ast, \theta_\ast)\in M^+$ be some arbitrary point with $r_\ast$ sufficiently large. Take a normal coordinate chart $U$ on $N$ around $\theta_\ast$ with coordinate functions $\theta^a:U\rightarrow\R^n$ so that $\mathring{h}_{ab}(\theta_\infty) = \delta_{ab}$. Consider the region $$\Omega :=(r_\ast - 1, r_\ast + 1) \times U \subset M^+.$$ Define a map $\Psi: \Omega \rightarrow \R\times\R^{n-1} = \{(\tilde{r}, \tilde{\theta}): \tilde{r}\in\R, \tilde{\theta}\in \R^{n-1}\}$ by $$\Psi (r, \theta) = (r-r_\ast, e^{r_\ast}(\theta - \theta_\ast))$$ and define $\tilde{V} := \Psi(\Omega).$ We also write $\psi(\theta) := e^{r_\ast}(\theta - \theta_\ast)$. Define $v := u \circ \Psi^{-1} : \tilde{V} \rightarrow \R.$
	
	Let $g_\ast = (\Psi^{-1})^* g$. As in the proof of Lemma \ref{liminfprop1}, as $r_\ast \rightarrow \infty$, $g_\ast$ converges in $C^1$ to $g_\infty = d\tilde{r}^2 + e^{2(\tilde{r} + r_0)}\delta_{ab}\ d\tilde{\theta}^a d\tilde{\theta}^b$. Moreover (see equation \eqref{vieq}), $v$ satisfies $\Delta_{g_\ast} v = \mathcal{O}\left(e^{-\alpha  r_\ast}\right)$ where the implicit constant in the $\mathcal{O}$ term is independent of $r_\ast$. Applying $W^{2,p}$ estimates to the above equation, after recasting to non-divergence form, and using Sobolev embeddings, we obtain $|\nabla v(0,0)|_{g_\infty} = \mathcal{O}(e^{-\alpha r_\ast})$ and so obtain decay in the first derivative $|\nabla u|_{\mathring{g}} = \mathcal{O}(e^{-\alpha r})$. 
	
	We finally address the maximality of $u$. Given another solution $\tilde{u}>0$ of \eqref{yaeqintro}, we know that $\limsup_{r(x)  \rightarrow \infty} \tilde{u}(x) \leq 1$, again from Proposition \ref{asymupperprop}, and so we have that $\liminf_{r(x)\rightarrow \infty} \frac{u}{\tilde{u}} \geq 1$ as $\lim_{r(x)\rightarrow \infty} u(x) = 1$. Consequently, if $\inf \frac{u}{\tilde{u}} < 1$ then the infimum must be attained and so we may apply the maximum principle (as in the proof of $u\leq u_+$) to conclude that $\tilde{u} \leq u$.
\end{proof}

\subsection{Uniqueness and completion of the proof of Theorem \ref{thmA}}\label{proofsec}

Under assumption \eqref{introupperdecay}, the first part of Theorem \ref{thmA} is proved in Proposition \ref{part1lemma}. For the remainder of the proof, we will assume that \eqref{introfulldecay} holds. By Lemma \ref{decaylemma}, we know that $\lim_{r\rightarrow\infty} u = 1$, $|u -1| + |\mathring{\nabla} u |_{\mathring{g}} = \mathcal{O}(e^{-\alpha r})$ and $u$ is the maximal solution of \eqref{yaeqintro}. It remains to prove the uniqueness of $u$ and that the conformal metric $\tilde{g} = u^\frac{4}{n-2}g$ is ALH of regularity order $0$ and with decay exponent $\alpha$, which we now treat in turn.

The uniqueness of $u$ is a direct corollary of the stronger result below:

\begin{prop}\label{uniqprop}
	Let $(M,g)$ be ALH of regularity order $1$ and decay exponent $\alpha\in(0, n)$ satisfying \eqref{introfulldecay}. Then the solution $u$ of the Yamabe problem on $(M,g)$ obtained in Proposition \ref{part1lemma} is the unique solution such that there exist an $S > 0$ and $r_1 > r_0$ such that
	\begin{equation}\label{odeassumpunic}
	\frac{\underbar{u}(r+S)}{\underbar{u}(r)} > \frac{n}{2}e^{-\frac{n-2}{2}S} - \frac{n-2}{2} e^{-\frac{n}{2}S} \text{ for all }r > r_1,
	\end{equation}
	where $\underline{u}(r) := \min_N \tilde{u}(r, \cdot)$.
\end{prop}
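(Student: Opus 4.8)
The plan is to deduce this uniqueness statement from the three main ingredients already in hand: the universal upper bound $\limsup_{r(x)\to\infty} u \le 1$ of Proposition \ref{asymupperprop}, the normalisation statement of Proposition \ref{decayliminfprop}, and the maximality clause of Lemma \ref{decaylemma}. First I would fix an arbitrary positive solution $\tilde u$ of \eqref{yaeqintro} obeying \eqref{odeassumpunic}. Since \eqref{introfulldecay} forces $S_g \to -n(n-1)$ along divergent sequences, hypothesis \eqref{sc2} of Proposition \ref{decayliminfprop} holds; as $\tilde u$ satisfies \eqref{odeassump} (which is precisely \eqref{odeassumpunic}), that proposition gives $\lim_{r(x)\to\infty}\tilde u(x)=1$. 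Meanwhile, Lemma \ref{decaylemma} (applicable here, since $u$ is the solution of Proposition \ref{part1lemma} and \eqref{introfulldecay} holds) gives $\lim_{r(x)\to\infty}u(x)=1$ together with the maximality $\tilde u\le u$ on $M$. Thus one inequality is free, and the whole problem reduces to proving the reverse bound $u\le\tilde u$.

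For the reverse bound I would rerun the maximum-principle comparison used in Lemma \ref{mplemma} and in the proof of $u\le u_+$ inside Lemma \ref{decaylemma}, now with $\tilde u$ in the role of the super-solution (a solution is a fortiori a super-solution). Concretely, set $m:=\inf_M \tilde u/u$. Since $u,\tilde u$ are positive and continuous and $\tilde u/u\to 1$ at infinity, if $m<1$ then $m$ is attained at an interior point; writing $C:=1/m>1$ and $w:=C\tilde u-u\ge 0$, one subtracts the equations satisfied by $C\tilde u$ and $u$ and uses the elementary inequality $C\,t^{\frac{n+2}{n-2}}\le (Ct)^{\frac{n+2}{n-2}}$, valid for $C\ge 1$, to obtain a linear differential inequality of the form $-c_n\Delta_g w+\big(S_g+n(n-1)c(x)\big)w\ge 0$ with $c(x)\ge 0$ (it is the difference quotient of $t\mapsto t^{\frac{n+2}{n-2}}$ between $u$ and $C\tilde u\ge u$, and its value at the point where $w=0$ is irrelevant because $w$ attains its minimum value $0$ there). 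The strong maximum principle then forces $w\equiv 0$ on the connected manifold $M$, i.e.\ $\tilde u\equiv m\,u$, which contradicts $\tilde u/u\to 1$. Hence $m\ge 1$, so $\tilde u\ge u$, and combined with $\tilde u\le u$ we conclude $\tilde u\equiv u$. To finish, I would note that $\lim_{r\to\infty}u=1$ itself verifies \eqref{odeassumpunic}: the right-hand side $\frac{n}{2}e^{-\frac{n-2}{2}S}-\frac{n-2}{2}e^{-\frac{n}{2}S}$ equals $1$ at $S=0$ and is strictly decreasing in $S$ (differentiate in $S$), hence is $<1$ for every $S>0$, so for any fixed $S>0$ and $r$ large one has $\underline u(r+S)/\underline u(r)$ close to $1$ and above this value. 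Thus $u$ does belong to the stated class, and the uniqueness clause of Theorem \ref{thmA} follows.

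I do not expect a genuinely hard step here; the analytically substantial content has been front-loaded into Propositions \ref{asymupperprop} and \ref{decayliminfprop} and Lemma \ref{decaylemma}. The one point needing a little care is that the infimum of the ratio $\tilde u/u$ must actually be \emph{attained} for the comparison to bite — and this is exactly where the asymptotic normalisations $u\to 1$ and $\tilde u\to 1$ (hence $\tilde u/u\to 1$) are used, ruling out the infimum escaping to infinity. The rest is a routine invocation of the strong maximum principle, identical in structure to the argument already given in Lemma \ref{mplemma}.
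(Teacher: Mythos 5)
Your proposal is correct and follows essentially the same route as the paper: apply Proposition \ref{decayliminfprop} to get $\tilde u\to 1$, then run the maximality/comparison argument of Lemma \ref{decaylemma} in both directions (the paper phrases this as observing that the proof of maximality there shows any solution tending to $1$ at infinity is maximal, so $u$ and $\tilde u$ are both maximal and hence equal). Your explicit rewriting of the comparison with $\tilde u$ in the super-solution role, and your check that $u$ itself satisfies \eqref{odeassumpunic}, are consistent with and slightly more detailed than the paper's argument.
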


\begin{proof}
	Let $\tilde{u}$ be any solution of \eqref{yaeqintro} on $(M,g)$ satisfying \eqref{odeassumpunic}. By Proposition \ref{decayliminfprop}, we know that $\lim_{r\rightarrow\infty} \tilde{u} = 1$. Observe that in the proof of the maximality of $u$ in Lemma \ref{decaylemma} we actually proved that any solution of \eqref{yaeqintro} which tends to $1$ at infinity is maximal. Consequently, both $u$ and $\tilde{u}$ are maximal and hence equal.
\end{proof}

Lastly, we prove that the asymptotic local hyperbolicity of the obtained complete conformal metric of constant scalar curvature, thus completing the proof of Theorem \ref{thmA}.

\begin{lemma}\label{asymhyplemma}
	Let $(M,g)$ be an ALH manifold of regularity order $1$ and with decay exponent $\alpha \in (0,n)$ satisfying \eqref{introfulldecay}. Let $u$ be the solution of \eqref{yaeqintro} obtained in Proposition \ref{part1lemma}, then $u = 1 +\mathcal{O}(e^{-\alpha r})$ and the corresponding conformal metric is ALH of regularity order $0$ and with decay exponent $\alpha$.
\end{lemma}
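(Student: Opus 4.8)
The estimate $u = 1 + \mathcal{O}(e^{-\alpha r})$ is immediate from Lemma~\ref{decaylemma}, which gives $|u-1| + |\mathring\nabla u|_{\mathring g} = \mathcal{O}(e^{-\alpha r})$; splitting this gradient bound along the warped–product directions of $\mathring g = dr^2 + f_k^2(r+r_0)\mathring h$ also yields $\partial_r u = \mathcal{O}(e^{-\alpha r})$ and $\partial_{\theta^a} u = \mathcal{O}(e^{(1-\alpha)r})$ in every preferred chart. With $v := u^{\frac{4}{n-2}} - 1 = \mathcal{O}(e^{-\alpha r})$, the components of $\tilde g = u^{\frac{4}{n-2}} g$ in the preferred charts for $g$ are $\tilde g_{rr} = 1+v$, $\tilde g_{ra} = (1+v) g_{ra} = \mathcal{O}(e^{-(\alpha-1)r})$, and, since $\mathring g_{ab} = f_k^2(r+r_0)\mathring h_{ab} = \mathcal{O}(e^{2r})$, also $\tilde g_{ab} = (1+v) g_{ab} = \mathring g_{ab} + \mathcal{O}(e^{-(\alpha-2)r})$. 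Hence the sole obstruction to $\tilde g$ already being in the form of Definition~\ref{alhdef} (relative to $(r,\theta)$ and $\mathring g$) is that $\tilde g_{rr} \neq 1$, and the plan is to absorb this by a change of radial coordinate.

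When $\alpha > 1$ this is easy: I would set $\tilde r(r,\theta) := r - \int_r^\infty (u^{\frac{2}{n-2}}(s,\theta) - 1)\, ds$, so that $\partial_r \tilde r = u^{\frac{2}{n-2}} > 0$, $\tilde r = r + \mathcal{O}(e^{-\alpha r})$, and $\partial_{\theta^a}\tilde r = -\int_r^\infty \partial_{\theta^a}(u^{\frac{2}{n-2}})\, ds = \mathcal{O}(e^{-(\alpha-1)r})$, the last integral converging precisely because $\alpha > 1$; for $r$ large this is a $C^1$ change of coordinates onto a neighbourhood of infinity, in which $\tilde g_{\tilde r\tilde r} = 1$ by construction. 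Keeping the angular coordinates, the reference metric $\mathring g$ and $r_0$ unchanged, a direct computation with the bounds above on $v$, $g_{ra}$, $g_{ab}-\mathring g_{ab}$ and $\partial\tilde r$ then gives $\tilde g_{\tilde r a} = \mathcal{O}(e^{-(\alpha-1)\tilde r})$ and $\tilde g_{ab} - \mathring g_{ab} = \mathcal{O}(e^{-(\alpha-2)\tilde r})$, so $(M,\tilde g)$ is ALH with decay exponent $\alpha$; and it is of regularity order exactly $0$, since, e.g., $\partial_{\theta^a}(u^{\frac{4}{n-2}} g_{bc}) = \mathcal{O}(e^{(3-\alpha)r})$ is one power of $e^r$ larger than the $\mathcal{O}(e^{(2-\alpha)r})$ size of the error $\tilde g_{bc} - \mathring g_{bc}$ itself.

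For $0 < \alpha \leq 1$ the integral from infinity no longer has a controlled angular derivative, and one must proceed differently. Since $u \to 1$, the conformal infinity of $\tilde g$ coincides with that of $g$, whose conformal class on $N$ contains the smooth, constant-scalar-curvature metric $\mathring h$ appearing in $\mathring g$; I would take $\tilde r$ to be the geodesic defining (radial) coordinate of $\tilde g$ associated with $\mathring h$ (in general one selects such a constant-scalar-curvature representative of the conformal infinity by invoking the solvability of the Yamabe problem on the compact cross-section $N$). By the Gauss lemma, $\tilde g = d\tilde r^2 + \tilde h_{\tilde r}$ with no cross terms and $\tilde g_{\tilde r\tilde r} = 1$, and one then propagates the $\mathcal{O}(e^{-\alpha r})$ control of $u-1$ and $\mathring\nabla u$ through the conformal change and through this coordinate change to conclude $\tilde h_{\tilde r} = f_k^2(\tilde r + r_0)\mathring h + \mathcal{O}_0(e^{-(\alpha-2)\tilde r})$, which is the asserted ALH structure.

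I expect this last step, carried out uniformly in $0 < \alpha \leq 1$, to be the main obstacle. One must estimate the change of radial coordinate — and in particular the angular displacement it induces — using only the single derivative bound $\partial_{\theta^a} u = \mathcal{O}(e^{(1-\alpha)r})$, together with the facts that the unperturbed radial lines are $\mathring g$-geodesics and that $g$ is $C^1$-close to $\mathring g$; the quantitative crux is that $e^{(1-\alpha)r}$ grows strictly more slowly than the warping scale $e^r$, which is exactly what keeps every induced error in $\tilde h_{\tilde r}$ at order $e^{(2-\alpha)\tilde r}$. The drop to regularity order $0$ — rather than the order $1$ one might hope to inherit from $g$ — is forced by having only one controlled derivative of $u$ and, when $\alpha < 1$, by the merely $C^{0,\alpha}$ conformal compactifiability; this is consistent with the remark following Theorem~\ref{thmA}.
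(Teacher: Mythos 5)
For $\alpha>1$ your argument is exactly the paper's: the paper introduces the coordinate $z:=r+\int_r^\infty\bigl(1-u^{\frac{2}{n-2}}\bigr)\,ds$ (identical to your $\tilde r$), computes $dz=u^{\frac{2}{n-2}}dr-\bigl(\int_r^\infty\frac{4-n}{n-2}u^{\frac{4-n}{n-2}}\partial_a u\,ds\bigr)d\theta^a$, and reads off the stated decay of $\tilde\varepsilon_{za}$ and $\tilde\varepsilon_{ab}$ from $|u-1|+|\mathring\nabla u|_{\mathring g}=\mathcal{O}(e^{-\alpha r})$ together with $f_k(r+r_0)^2-f_k(z+r_0)^2=\mathcal{O}(e^{(2-\alpha)r})$. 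One point of divergence: the paper does \emph{not} split into cases in $\alpha$ — it runs this single computation for all $\alpha\in(0,n)$ and simply asserts $\partial_a z=\mathcal{O}(e^{-(\alpha-1)r})$. Your observation that the available bound $\partial_{\theta^a}u=\mathcal{O}(e^{(1-\alpha)r})$ makes the tail integral $\int_r^\infty\partial_a(u^{\frac{2}{n-2}})\,ds$ non-absolutely-convergent when $\alpha\le1$ is a fair and genuinely perceptive criticism, and it applies to the paper's own write-up as much as to yours; anchoring the integral at a finite radius instead restores control of $\partial_a z$ but destroys the property $z-r\to0$ that is needed to compare $f_k^2(z+r_0)\mathring h$ with $f_k^2(r+r_0)\mathring h$ at order $e^{(2-\alpha)r}$, so the tension is real.

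However, your substitute for the range $0<\alpha\le1$ is not a proof. Taking ``the geodesic defining coordinate of $\tilde g$ associated with $\mathring h$'' presupposes a boundary (or at least a controlled conformal compactification) from which to shoot normal geodesics; when $\alpha<1$ the compactification is only $C^{0,\alpha}$, so the normal exponential map off the putative boundary is not obviously a $C^1$ chart with quantitative bounds, and constructing the required foliation near infinity needs precisely the kind of asymptotic control of $\tilde g$ that the lemma is trying to establish. The subsequent step — ``propagating'' the $\mathcal{O}(e^{-\alpha r})$ bounds on $u-1$ and $\mathring\nabla u$ through this change of coordinates to get $\tilde h_{\tilde r}=f_k^2(\tilde r+r_0)\mathring h+\mathcal{O}(e^{(2-\alpha)\tilde r})$ — is the entire content of the claim in that regime, and you explicitly leave it open. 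As written, your proposal therefore proves the lemma only for $\alpha>1$; for $\alpha\le1$ it identifies a real difficulty but does not resolve it.
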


\begin{proof}
	By Lemma \ref{decaylemma} we have $ |u-1| + |\mathring{\nabla} u|_{\mathring{g}}= \mathcal{O}(e^{-\alpha r})$. Consider the new coordinate function
	\begin{equation*}
	z := r + \int_{r}^{\infty} \left(1 - u^\frac{2}{n-2}\right) \ ds \ .
	\end{equation*}
	First note that
	\begin{equation*}
	z - r = \int_{r}^{\infty} \left(1 - u^\frac{2}{n-2}\right) \ ds = \mathcal{O}_1(e^{-\alpha r})
	\end{equation*}
	and the map $(r, \theta) \mapsto (z, \theta)$ is a diffeomorphism for large $r$. We will show that this implies $\tilde{g} = u^{\frac{4}{n-2}}g$ is ALH of regularity order $0$ and decay exponent $\alpha$ with respect to the coordinate system $(z, \theta)$.
	
	We compute that $$dz = u^\frac{2}{n-2} dr - \left(\int_r^\infty \frac{4-n}{n-2}u^{\frac{4-n}{n-2}}\partial_a u \ ds\right)d\theta^a = u^\frac{2}{n-2} dr + \mathcal{O}(e^{-(\alpha -1 ) r}) d\theta^a.$$
	Recalling that $g$ is ALH and so may be written $g = dr^2 + f_k(r+r_0)^2 \mathring{h} + \varepsilon_{ra}d\theta^a dr + \varepsilon_{ab}d\theta^a d\theta^b$ where $\varepsilon_{ab} = \mathcal{O}(e^{-(\alpha-2)r})$ and $\varepsilon_{ra} = \mathcal{O}(e^{-(\alpha -1)r})$, we deduce that
	\begin{equation*}
	\tilde{g} = dz^2 + f_k(z+r_0)^2\mathring{h} + \tilde{\varepsilon}_{za}d\theta^a dz + \tilde{\varepsilon}_{ab}d\theta^ad\theta^b
	\end{equation*}
	where $\tilde{\varepsilon}_{za} = \mathcal{O}(e^{-(\alpha -1)z})$ and
	\begin{equation*}
	\tilde{\varepsilon}_{ab} = u^{\frac{4}{n-2}}\varepsilon_{ab} + \left(u^{\frac{4}{n-2}}f_k(r+r_0)^2 - f_k(z+r_0)^2\right)\mathring{h}_{ab} + \mathcal{O}(e^{-2(\alpha - 1)z}).
	\end{equation*} 
	Since $f_k(r+r_0)^2 = f_k(z+r_0)^2 + \mathcal{O}_1(e^{-(\alpha-2)z})$, the conclusion follows.
\end{proof}

\subsection{Asymptotically warped product manifolds}\label{wpsubsec}

To conclude this section, we briefly outline some remarks on the Yamabe Problem for asymptotically warped product manifolds.

Recall that, in the definition of ALH manifolds, we supposed that we could decompose $(M, g)$ in two parts, $M = M_0 \cup M^+$, where $M_0$ is some compact manifold with boundary, $M^+ = \R^+ \times N$ with $(N,\mathring{h})$ some compact manifold of constant scalar curvature $S_{\mathring{h}} = (n-1)(n-2)k$ for some $k \in \{-1, 0, 1\}$ and $M_0$ and $M^+$ coincide on their common boundary. We then considered those metrics $g$ which asymptote to the locally hyperbolic reference metrics in \eqref{referencemetric}.

One may, instead, consider a different choice of reference metric to which $g$ is asymptotic. An immediate generalisation of interest would be the following warped product metrics
\begin{equation}\label{warped}
\mathring{g}_f = dz^2 + f^2(z) \mathring{h}
\end{equation} 
where $f$ is some positive function, usually referred to as the warping function. Metrics $g$ which are asymptotic to $\mathring{g}_f$ will be informally referred to as asymptotically warped product (AWP) metrics.

In this section, we discuss the applications of Proposition \ref{thmA} in this broader category of AWP metrics. This amounts to understanding whether the warped product metrics of \eqref{warped} are conformal to the reference locally hyperbolic metrics of \eqref{referencemetric}. We establish a necessary and sufficient condition on the warping function for such conformality in Proposition \ref{wpeconfprop}. The condition we identify is an integral condition of Keller-Osserman type (see \cite{https://doi.org/10.1002/cpa.3160100402, pjm/1103043236}). Furthermore, we also show that, if the warped product metric is not conformally locally hyperbolic, then it is conformal to a complete metric of finite volume.

\begin{prop}\label{wpeconfprop}
	Let $f: [0, \infty) \rightarrow \R$ be a smooth, positive function. The metric $\mathring{g}_f$ in \eqref{warped} is conformal to a locally hyperbolic metric \eqref{referencemetric} if and only if
	\begin{equation}\label{wpcond}
	I(f) := \int_0^\infty \frac{1}{f(s)} \ ds < \infty.
	\end{equation}
	Moreover, if \eqref{wpcond} does not hold, then $\mathring{g}_f$ is conformal to $d\check{z}^2 + e^{-2\check{z}} \mathring{h}$ which is a complete metric of finite volume on $M^+$.
\end{prop}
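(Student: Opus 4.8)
The key is that the two metrics $\mathring{g}_f = dz^2 + f^2(z)\mathring{h}$ and a reference metric of the form \eqref{referencemetric} are related by a conformal factor that can only be a function of $z$ (up to the symmetry of the problem), and the conformal change of a warped product is again a warped product after a change of the radial coordinate. So I would look for a conformal factor $\varphi = \varphi(z) > 0$ such that $\varphi^{\frac{4}{n-2}} \mathring{g}_f$ is isometric to $\mathring{g}$ in \eqref{referencemetric}. Writing $\check{z}$ for the new arclength coordinate, $d\check{z} = \varphi^{\frac{2}{n-2}}\,dz$, one needs the warping function to transform as $f_k(\check{z} + \check{r}_0) = \varphi^{\frac{2}{n-2}} f(z)$ for some $k \in \{-1,0,1\}$. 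The cleanest route is to go through the scalar curvature: a conformal metric $\varphi^{\frac{4}{n-2}}\mathring{g}_f$ has constant scalar curvature $-n(n-1)$ exactly when $\varphi$ solves the Yamabe equation $-c_n \Delta_{\mathring{g}_f}\varphi + S_{\mathring{g}_f}\varphi = -n(n-1)\varphi^{\frac{n+2}{n-2}}$, and then whether the resulting complete metric is locally hyperbolic of the form \eqref{referencemetric} (as opposed to $d\check z^2 + e^{-2\check z}\mathring h$) is dictated by its behaviour as $\check z \to \infty$, i.e. by whether $\check z$ ranges over a half-line or a finite interval.

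**Key steps.** First I would substitute the ODE ansatz $\varphi = \varphi(z)$ into the Yamabe equation for $\mathring{g}_f$, using the warped-product Laplacian $\Delta_{\mathring{g}_f}\varphi = \varphi'' + (n-1)\frac{f'}{f}\varphi'$ and the warped-product scalar curvature formula quoted in Section \ref{defs}; this reduces matters to a scalar second-order ODE in $z$. Second, I would introduce the change of independent variable $t = t(z)$ with $\frac{dt}{dz} = \varphi^{2/(n-2)}$ (so $t$ is arclength for the conformal metric) and a suitable power-type change of dependent variable to recognise the equation as the constant-curvature equation $f_k'' = f_k$ (equivalently $(f_k')^2 - f_k^2 = \text{const}$), so that solutions split into the $\sinh$, $e^{z}$, $\cosh$ families according to the sign of that constant of integration, together with the degenerate solution giving $d\check z^2 + e^{-2\check z}\mathring h$. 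Third — and this is where \eqref{wpcond} enters — I would track the total $t$-length of the new coordinate: $t(\infty) - t(0) = \int_0^\infty \varphi^{2/(n-2)}\,dz$, and show using barrier/comparison arguments (exactly the sub/super-solution machinery already developed, e.g. around Lemmas \ref{subsollemma}, \ref{supersollemma}, and the ODE comparison in Lemma \ref{mplemma}) that $\varphi^{2/(n-2)} \sim \frac{c}{f(z)}$ as $z \to \infty$, so that this length is finite iff $I(f) < \infty$. A finite total arclength forces the limiting behaviour $\check z \to \check z_\infty < \infty$, hence (after translating $\check z$) a $\cosh$- or $\sinh$-type profile completing to a genuine locally hyperbolic metric \eqref{referencemetric}; an infinite total arclength leaves only the degenerate profile, which is precisely $d\check z^2 + e^{-2\check z}\mathring h$. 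Finally, for the last sentence I would verify directly that $d\check z^2 + e^{-2\check z}\mathring h$ on $\check z \in [0,\infty)$ is complete (the radial coordinate is unbounded) and has finite volume ($\int_0^\infty e^{-(n-1)\check z}\,d\check z < \infty$ times $\mathrm{Vol}_{\mathring h}(N)$).

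**Main obstacle.** The routine parts are the ODE reduction and the constant-curvature classification; the genuinely delicate step is the asymptotic analysis of $\varphi$, i.e. proving the two-sided bound $\varphi^{2/(n-2)}(z) \asymp 1/f(z)$ (or more precisely that $\int_0^\infty \varphi^{2/(n-2)}$ converges iff $I(f)$ does), since $f$ is an arbitrary positive smooth warping function with no monotonicity or regularity control beyond smoothness. The natural tool is a Keller–Osserman-type comparison: one constructs explicit sub- and super-solutions built out of $1/f$ and $I(f)$, much as the barriers in Lemmas \ref{subsollemma} and \ref{supersollemma} are built from $e^{-\alpha r}$, and then invokes the comparison principle of Lemma \ref{mplemma} to sandwich $\varphi$. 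Care is needed because $f$ may oscillate wildly (as in Example \ref{3dscexample}), so the barriers must be chosen robustly — likely in terms of the tail integral $\int_z^\infty f^{-1}$ rather than $f$ itself — and one must separately rule out solutions $\varphi$ that decay faster (which would correspond to the incomplete $B_R$-type profiles of Example \ref{condneededintro}) by appealing to completeness of the target metric, i.e. selecting the maximal solution as in Lemma \ref{decaylemma}.
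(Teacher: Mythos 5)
There are genuine gaps here, and the central quantitative step is backwards. If $\varphi$ is a radial conformal factor with $\varphi^{\frac{4}{n-2}}\mathring{g}_f$ equal to a reference metric \eqref{referencemetric}, then your own transformation rule $f_k(\check z+\check r_0)=\varphi^{\frac{2}{n-2}}f(z)$ gives $\varphi^{\frac{2}{n-2}}=f_k(\check z+\check r_0)/f(z)$, which is much larger than $c/f(z)$ (since $f_k\to\infty$), so the claimed asymptotic $\varphi^{2/(n-2)}\sim c/f$ is false for the factor you are trying to produce. Moreover, the total arclength $\check z_\infty=\int_0^\infty\varphi^{2/(n-2)}\,dz$ of the target must be \emph{infinite} for the target to be a complete metric of the form \eqref{referencemetric}; a finite arclength gives an incomplete metric (the $B_R$-type picture of Example \ref{condneededintro}), not a locally hyperbolic one. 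The quantity that actually detects \eqref{wpcond} is not the arclength but the invariant $\int_0^\infty f^{-1}\,dz=\int_0^{\check z_\infty} f_k(\check z+\check r_0)^{-1}\,d\check z$ (your transformation rule integrated), which is bounded by $\int_0^\infty f_k^{-1}<\infty$ whenever $\check z_\infty=\infty$; equivalently, the dichotomy is infinite volume (locally hyperbolic) versus finite volume (the metric $d\check z^2+e^{-2\check z}\mathring h$), both complete. Note also that $d\check z^2+e^{-2\check z}\mathring h$ has scalar curvature $-n(n-1)+S_{\mathring h}e^{2\check z}$, which is not constant unless $k=0$, so the ``Moreover'' part cannot be reached by solving the Yamabe equation at all.

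The second gap is the reduction to radial conformal factors. For the ``only if'' direction you must exclude \emph{every} conformal factor, and $(N,\mathring h)$ is merely a compact manifold of constant scalar curvature, not necessarily homogeneous, so there is no symmetry argument forcing $\varphi=\varphi(z)$; your classification of radial ODE solutions (which, incidentally, form the two-parameter family $(F')^2=F^2+k-CF^{-(n-2)}$, not just the three profiles you list) does not address non-radial factors. The paper's route avoids both problems and is much shorter: for $I(f)<\infty$ one writes down the conformal equivalence explicitly by solving the separable first-order ODE $f(z)K'(z)=f_k(K(z))$, whose solvability onto all of $[0,\infty)$ is exactly the condition $I(f)<\infty$ (this is where $r_0=2\arctanh(e^{-I(f)})$ etc.\ come from) --- no barriers, no comparison principle, no second-order analysis. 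For $I(f)=\infty$ one shows $\mathring g_f$ is conformal to the explicit complete finite-volume metric $d\check z^2+e^{-2\check z}\mathring h$ via $\check z=\log(1+\int_0^z f^{-1})$, and then proves the converse --- that being conformal to a complete finite-volume metric forces $I(f)=\infty$ --- by a H\"older inequality applied to the lengths of the radial curves, valid for arbitrary (non-radial) conformal factors; since $I(f_k)<\infty$, the reference metrics are not conformal to complete finite-volume metrics, and transitivity of conformality concludes. You should restructure your argument around this first-order ODE and the volume/completeness invariant rather than the Yamabe equation and arclength.
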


\begin{proof}
	\underline{Step 1:} We first suppose that $I(f) < \infty$ and show that $\mathring{g}_f$ is conformal to a locally hyperbolic metric.
	
	Recall the function $f_k$ in the definition of the locally hyperbolic reference metrics $\mathring{g}$ in \eqref{referencemetric} and consider the following separable ODE for an unknown function $K$: 
	\begin{equation}\label{fKode}
	f(z)K^\prime(z) = f_k(K(z)) =  \begin{cases}
	\sinh(K(z) + r_0) & \ k=1,\\
	e^{K(z)+r_0} & \ k=0,\\
	\cosh(K(z) + r_0) & \ k=-1,
	\end{cases}
	\end{equation}
	with the initial condition $K(z_0) = 0$ where $z_0 = 0$ and $r_0 = 2 \arctanh\left(\exp(-I(f))\right)$ if $k=1$, $z_0=0$ and $r_0 = \log\left(I(f)\right)$ if $k=0$, and $z_0$ satisfies $\int_{z_0}^\infty f(s) \ ds < \frac{\pi}{2}$ and $r_0 = 2\arctanh\left(\tan\left(\frac{\pi}{4} - \frac{1}{2}\int_{z_0}^\infty f(s) \ ds\right)\right)$ if $k=-1$. It is straightforward to check that the explicit solution is defined for all $z\geq z_0$ and, furthermore, that $K(z)$ is strictly increasing and $K(\infty) = \infty$. Consequently, $K:[z_0, \infty)\rightarrow [0, \infty)$ is a diffeomorphism.
	
	Consider a coordinate system $(r, \theta)$ on $M^+$ where $\theta$ is some coordinate system on $N$ and $r = K(z)$ for $z>z_0$. For $z>z_0$, in view of \eqref{referencemetric} and \eqref{fKode}, we have $$\mathring{g}_f = (K^\prime(z))^{-2}\left(dr^2 + (K^\prime(z)f(z))^{2}h\right) = (K^\prime(z))^{-2}\mathring{g}$$
	as desired.
	
	\medskip
	\noindent \underline{Step 2:} In order to show the sufficiency in the proposition, we first prove that $I(f) = \infty$ if and only if $\mathring{g}_f$ is conformal to a complete, finite volume metric on $M^+$.
	
	To see this, first suppose $I(f) = \infty$ and define $\check{K}(z) = \log\left(1 + \int_0^z (f(s))^{-1} ds\right)$ so that $\check{K}^\prime(z) = \left(f(z)\right)^{-1}e^{-\check{K}(z)}$. As $I(f) = \infty$, we may define a new coordinate $\check{z} = \check{K}(z)$ so that $$\mathring{g}_f = dz^2 + f^2(z)\mathring{h} = \left(\check{K}^\prime(z)\right)^{-2}\left(d\check{z}^2 + (\check{K}^\prime(z)f(z))^2 \mathring{h}\right) = \left(\check{K}^\prime(z)\right)^{-2}\left(d\check{z}^2 + e^{-2\check{z}} \mathring{h}\right)$$ and so $\mathring{g}_f$ is conformal to $d\check{z}^2 + e^{-2\check{z}}\mathring{h}$ which can readily be seen to be both complete and of finite volume on $M^+$.
	
	Conversely, suppose that there exists some conformal factor $u(z,\theta)$ such that $\check{g} = u^{\frac{4}{n-2}}(z, \theta) \mathring{g}_f$
	for some complete, finite volume metric $\check{g}$. Consider the divergent curves $\gamma_\theta : [0,\infty) \rightarrow M^+$ defined by $\gamma_\theta (t) = \left(z(t), \theta\right)$ for $\theta\in N$. By completeness, we have that the length $\mathcal{L}_{\check{g}}(\gamma_\theta)=\infty$ for all $\theta \in N$. We compute directly the length
	\begin{align*}
	\infty = \mathcal{L}_{\check{g}}(\gamma_\theta) &= \int_0^\infty \sqrt{\check{g}\left(\dot{\gamma}_\theta(t), \dot{\gamma}_\theta(t)\right)} \ dt = \int_0^\infty u^{\frac{2}{n-2}}(z, \theta) \ dz.
	\end{align*}
	Consequently, we may write
	\begin{align*}
	\infty = \int_N \mathcal{L}_{\check{g}}(\gamma_\theta) \ d\theta &= \int_N \int_0^\infty u^{\frac{2}{n-2}}(z, \theta) \ dz \ d\theta\\
	&\leq \left(\int_N \int_0^\infty u^{\frac{2n}{n-2}}(z, \theta) f^{n-1}(z) \ dz \ d\theta\right)^\frac{1}{n}\left(\int_N \int_0^\infty \frac{1}{f(z)} \ dz \ d\theta\right)^\frac{n-1}{n}\\
	&= Vol_{\check{g}}(M^+)^\frac{1}{n} Vol_h(N)^\frac{n-1}{n}I(f)^\frac{n-1}{n}.
	\end{align*}
	Since $Vol_{\check{g}}(M^+) < \infty$ we deduce that $I(f) = \infty$.
	
	\medskip
	\noindent\underline{Step 3:} We now suppose that $I(f) = \infty$ and prove that then $\mathring{g}_f$ cannot be conformal to a locally hyperbolic metric.
	
	By Step 2, $\mathring{g}_{f}$ (which has $I(f) = \infty$) cannot be conformal to another warped product metric $\mathring{g}_{\tilde{f}}$ with $I(\tilde{f}) < \infty$, as the former is conformal to a complete metric of finite volume, and the latter cannot be. On the other hand, we can compute that $I(f_k) < \infty$ for the reference locally hyperbolic metrics in \eqref{referencemetric} for each $k= -1, 0, 1$. The conclusion follows.
\end{proof}

It should be clear that, via Proposition \ref{wpeconfprop}, one can apply Theorem \ref{thmA} to solve the Yamabe problem for AWP metrics with a wide range of warping functions. For further detail, the reader is referred to \cite{thesis}. The only reference warped product metrics for which this line of argument does not apply are those which are conformal to $\check{g} = d\check{z}^2 + e^{-2\check{z}} \mathring{h}$. It can be shown that $\check{g}$ admits a conformal compactification where infinity corresponds to a single point, which is very different to the situation for ALH metrics.

\section{Volume ratio conditions for solvability of the Yamabe problem}\label{dcsec}

	In this section of this paper, we prove Theorem \ref{dcexistthm} concerning the negativity of the first eigenvalue of the conformal Laplacian on sub-domains of non-compact manifolds of negative curvature type satisfying the volume ratio condition \eqref{dccondexist}. In view of the work of \cite{aviles1988}, this leads us to an existence result for the Yamabe problem. We furthermore show that condition \eqref{dccondexist} is sharp for the negativity of the first eigenvalue (see Proposition \ref{sharpprop}).

    \subsection{An upper bound on the eigenvalue for the conformal Laplacian}
    
    As in the setting of Theorem \ref{dcexistthm}, let $(M,g)$ be a Riemannian manifold and suppose that there exist two open sets $\Omega_1\subset \Omega_2$ with $C^1$ boundary which satisfy \eqref{eqspace} and that the scalar curvature satisfies $S_g \leq -n(n-1)$ on $\Omega_2$.
    
    In proving Theorem \ref{dcexistthm}, we relate the first eigenvalue of the conformal Laplacian on $\Omega_2$ to the following sup-norm minimisation problem: 
    \begin{equation}\label{snmin}
    H_0 = \inf \left\{H(\tilde{\varphi}) := \sup_{0\leq r \leq R} F\left(\tilde{\varphi}(r), \tilde{\varphi}^\prime(r)\right) : \tilde{\varphi}\in C^1([0, R]), \tilde{\varphi}(0) = 0, \tilde{\varphi}(R) = 1\right\}
    \end{equation}
    where $F(y,z) := c_n z^2 - n(n-1) y^2$ and $R$ is given in \eqref{eqspace}. Note that $H(\tilde{\varphi}) \geq F(\tilde{\varphi}(0), \tilde{\varphi}'(0)) \geq 0$ and so $H_0 \geq 0$.
    
    We recall the variational formulation of the first eigenvalue $\lambda$ of the conformal Laplacian with Dirichlet boundary data on the bounded domain $\Omega_2$:
    \begin{equation}\label{varform}
    \lambda = \inf_{\substack{\varphi\in H^1_0(\Omega_2), \\ \|\varphi\|_{L^2} = 1}} \int_{\Omega_2} \left(c_n|\nabla_g \varphi|^2 + S_g\varphi^2\right) dV_g.
    \end{equation}
    
    Define the distance function $r:\overline{\Omega_2\setminus\Omega_1} \rightarrow [0,R]$ by $r(x) = d_g(x, \partial\Omega_2).$ Central in our discussion will be test functions $\varphi = \varphi(r)$ of the form
    \begin{equation}\label{phiform}
    \varphi(x) = \left\{\begin{array}{ll}\displaystyle
    1 & x\in\Omega_1\\
    \vspace{-0.3cm}\\
    \tilde{\varphi}(r(x)) & x\in\overline{\Omega_2\setminus\Omega_1}
    \end{array}\right.
    \end{equation}
    where $\tilde{\varphi}:[0, R] \rightarrow \R$ is a $C^1$ function satisfying $\tilde{\varphi}(0) = 0$ and $\tilde{\varphi}(R) = 1$. We note that, as $r$ is Lipschitz, these conditions on $\tilde{\varphi}$ ensure that $\varphi\in H^1_0(\Omega_2)$ and so is a valid test function.
    
    Assuming this form for $\varphi$, we bound the integral in \eqref{varform} by
    \begin{align}
    \int_{ \Omega_2} &\left(c_n|\nabla_g \varphi|^2 + S_g\varphi^2\right) dV_g \leq  \int_{ \Omega_2} \left(c_n(\varphi^\prime)^2 -n(n-1)\varphi^2\right) dV_g\nonumber\\ 
    &\qquad \leq -n(n-1)Vol_g(\Omega_1) + \int_{ \Omega_2\setminus \Omega_1} \left(c_n(\tilde{\varphi}^\prime)^2 -n(n-1)\tilde{\varphi}^2\right) dV_g\nonumber\\
    &\qquad \leq -n(n-1)Vol_g(\Omega_1) + Vol_g(\Omega_2\setminus \Omega_1) \sup_{0\leq r \leq R} \left(c_n(\tilde{\varphi}^\prime(r))^2 -n(n-1)\tilde{\varphi}^2(r)\right)\nonumber\\
    &\qquad = -n(n-1)Vol_g(\Omega_1) + Vol_g(\Omega_2\setminus \Omega_1)H(\tilde{\varphi}) \label{raylineq}
    \end{align}
    where we used additionally that $S_g \leq - n(n-1)$ on $\Omega_2$. In particular, we have:
   
   \begin{lemma}\label{evlemma}
   	Let $(M,g)$ be a Riemannian manifold and suppose that there exist two open sets $\Omega_1\subset \Omega_2$ with $C^1$ boundary which satisfy \eqref{eqspace}, that the scalar curvature satisfies $S_g \leq -n(n-1)$ on $\Omega_2$, and that
   	\begin{equation}\label{dccondexisth0}
   	H_0\frac{Vol_g(\Omega_2\setminus \Omega_1)}{Vol_g(\Omega_1)} < n(n-1),
   	\end{equation}
   	where $H_0$ is the infimum defined in \eqref{snmin}. Then, the conformal Laplacian $-c_n\Delta_g + S_g$ for $(M, g)$ has a negative first eigenvalue on $\Omega_2$.
   \end{lemma}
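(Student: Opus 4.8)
The plan is to turn the estimate \eqref{raylineq}, which is already established, into a strict sign statement for the Rayleigh quotient in \eqref{varform} by feeding it a near-minimiser of the one-dimensional problem \eqref{snmin}. First I would note that $\Omega_1$ is a non-empty open set, since its boundary $\partial\Omega_1$ appears in \eqref{eqspace}, so that $Vol_g(\Omega_1) > 0$ and the ratio in \eqref{dccondexisth0} is well defined. Using \eqref{dccondexisth0} I would then fix a number $\eta > 0$ small enough that
\begin{equation*}
(H_0 + \eta)\,Vol_g(\Omega_2\setminus\Omega_1) < n(n-1)\,Vol_g(\Omega_1).
\end{equation*}

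Next, by the definition of $H_0$ as an infimum I would pick $\tilde{\varphi} \in C^1([0,R])$ with $\tilde{\varphi}(0) = 0$, $\tilde{\varphi}(R) = 1$ and $H(\tilde{\varphi}) < H_0 + \eta$, and let $\varphi \in H^1_0(\Omega_2)$ be the associated test function defined by \eqref{phiform}. Since $\varphi \equiv 1$ on $\Omega_1$, we have $\|\varphi\|_{L^2(\Omega_2)}^2 \geq Vol_g(\Omega_1) > 0$, so $\varphi$ is a non-trivial admissible competitor in \eqref{varform}. Substituting $\varphi$ into \eqref{raylineq} gives
\begin{equation*}
\int_{\Omega_2} \left(c_n|\nabla_g\varphi|^2 + S_g\varphi^2\right) dV_g \;\leq\; -n(n-1)\,Vol_g(\Omega_1) + Vol_g(\Omega_2\setminus\Omega_1)\,H(\tilde{\varphi}) \;<\; 0,
\end{equation*}
where the last inequality uses $H(\tilde{\varphi}) < H_0 + \eta$ together with the choice of $\eta$. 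Dividing by $\|\varphi\|_{L^2(\Omega_2)}^2 > 0$ and applying \eqref{varform} to the normalised function $\varphi/\|\varphi\|_{L^2(\Omega_2)}$ then yields $\lambda < 0$, which is the assertion.

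There is no genuine obstacle at this stage: the only points needing comment are that $\varphi$ lies in $H^1_0(\Omega_2)$ and has positive $L^2$-norm (both observed around \eqref{phiform}) and that $C^1$ functions approach the infimum $H_0$ (immediate from its definition). The substantive work is deferred to the following stage, namely the explicit evaluation of $H_0$ via the sup-norm minimisation procedure; this is what converts the abstract hypothesis \eqref{dccondexisth0} into the geometric volume-ratio condition \eqref{dccondexist}, and where the constant $\frac{\sqrt{n(n-2)}}{2}$, entering through $\sinh^2$, appears.
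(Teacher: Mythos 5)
Your proposal is correct and follows essentially the same route as the paper, which simply cites \eqref{snmin} and \eqref{raylineq}; you have merely filled in the standard $\eta$-approximation of the infimum $H_0$ and the normalisation in \eqref{varform}, both of which are routine and correctly handled.
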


\begin{proof}
The lemma follows from \eqref{snmin} and \eqref{raylineq}.
\end{proof}

The proof of Theorem \ref{dcexistthm} will use the explicit form of the solution of the minimisation problem \eqref{snmin} which we will obtain using \cite{MR196551}.\footnote{In fact, the work of Aronsson addressed a broader class of sup-norm minimisation problems and for rougher admissible functions.}

\begin{lemma}\label{h0lemma}
	The minimisation problem \eqref{snmin} is uniquely solved by 
	\begin{multline}\label{explmin}
		H_0 = n(n-1)\csch^2\left(\frac{\sqrt{n(n-2)}}{2} R\right) \text{ and }\\
		\tilde{\varphi}(r) = \sinh\left(\frac{\sqrt{n(n-2)}}{2} r\right)\csch\left(\frac{\sqrt{n(n-2)}}{2} R\right).
	\end{multline}
\end{lemma}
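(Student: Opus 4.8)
The plan is to solve the sup-norm minimisation problem \eqref{snmin} by the method of Aronsson \cite{MR196551}: the optimal $\tilde\varphi$ should equalise the integrand, i.e.\ make $F(\tilde\varphi(r),\tilde\varphi'(r))$ constant in $r$, and this constant is then forced to be $H_0$. Concretely, I would first argue the lower bound $H(\tilde\varphi)\ge H_0$ for every admissible $\tilde\varphi$ by a "calibration'' argument: if $H(\tilde\varphi)=H$ then $c_n(\tilde\varphi')^2\le H+n(n-1)\tilde\varphi^2$ pointwise, so $|\tilde\varphi'|\le \sqrt{(H+n(n-1)\tilde\varphi^2)/c_n}$; since $\tilde\varphi$ must increase from $0$ to $1$ over an interval of length $R$ (one may assume $\tilde\varphi\ge 0$ and monotone after replacing $\tilde\varphi$ by a monotone rearrangement that does not increase $H$), integrating the separated inequality $d\tilde\varphi/\sqrt{(H+n(n-1)\tilde\varphi^2)/c_n}\le dr$ from $0$ to $1$ gives
\[
R \ge \int_0^1 \sqrt{\frac{c_n}{H+n(n-1)y^2}}\,dy,
\]
and evaluating this elementary integral (it is $\frac{2}{\sqrt{n(n-2)}}\arctanh\!\big(\sqrt{n(n-1)/(H+n(n-1))}\big)$ or the like, using $c_n/n(n-1)=4/(n(n-2))$) and inverting yields precisely $H\ge n(n-1)\csch^2\!\big(\tfrac{\sqrt{n(n-2)}}{2}R\big)$.

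Next I would check that the candidate $\tilde\varphi(r)=\sinh\!\big(\tfrac{\sqrt{n(n-2)}}{2}r\big)\csch\!\big(\tfrac{\sqrt{n(n-2)}}{2}R\big)$ is admissible — clearly $\tilde\varphi(0)=0$, $\tilde\varphi(R)=1$, $\tilde\varphi\in C^1$ — and that it achieves $H_0$. A direct computation using $\cosh^2-\sinh^2=1$ shows $c_n(\tilde\varphi')^2-n(n-1)\tilde\varphi^2$ is constant: with $\mu:=\tfrac{\sqrt{n(n-2)}}{2}$ one has $c_n\mu^2=n(n-1)$, so
\[
c_n(\tilde\varphi')^2-n(n-1)\tilde\varphi^2
= n(n-1)\csch^2(\mu R)\big(\cosh^2(\mu r)-\sinh^2(\mu r)\big)
= n(n-1)\csch^2(\mu R),
\]
which is exactly the claimed $H_0$; hence $H(\tilde\varphi)=H_0$ and the infimum is attained.

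Finally, for uniqueness I would observe that equality in the lower-bound chain forces, at every $r$, both $F(\tilde\varphi(r),\tilde\varphi'(r))=H_0$ (the sup is attained everywhere) and $\tilde\varphi'\ge 0$; these two conditions give the ODE $\tilde\varphi'=\sqrt{(H_0+n(n-1)\tilde\varphi^2)/c_n}$ with $\tilde\varphi(0)=0$, whose unique solution is the stated $\sinh$ profile. One should also dispose of the normalisation step: an arbitrary competitor need not be nonnegative or monotone, so I would first note that replacing $\tilde\varphi$ by $\max(\tilde\varphi,0)$ — or more carefully by the monotone function with the same range — does not increase $H$, reducing to the monotone nonnegative case. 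The main obstacle is this reduction-to-monotonicity step together with making the "equality forces the ODE'' argument rigorous for merely $C^1$ competitors; the integral computations themselves are routine, and the bulk of the work (the general sup-norm minimisation machinery) can be cited from \cite{MR196551}.
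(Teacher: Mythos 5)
Your proposal is correct, but it reaches the result by a genuinely different route from the paper. You prove the lower bound $H(\tilde\varphi)\ge n(n-1)\csch^2\bigl(\tfrac{\sqrt{n(n-2)}}{2}R\bigr)$ by a calibration/first-integral argument: separate variables in the pointwise inequality $c_n(\tilde\varphi')^2\le H+n(n-1)\tilde\varphi^2$, integrate over the interval, and invert the resulting relation between $R$ and $H$; uniqueness then comes from the equality case of this chain. The paper instead follows Aronsson \cite{MR196551} more literally: it shows that any admissible $\psi$ with $\psi'>0$ solving $F(\psi,\psi')\equiv\text{const}$ is automatically the unique minimiser, by comparing an arbitrary competitor $\psi_2$ with $\psi$ through $g=\psi^{-1}\circ\psi_2$ and using the mean value theorem to produce points $\xi,\xi_2$ with $\psi_2(\xi)=\psi(\xi_2)$ and $\psi_2'(\xi)>\psi'(\xi_2)>0$, whence $H(\psi_2)>H(\psi)$ strictly; the explicit value of $H_0$ then simply falls out of verifying that the $\sinh$ profile solves the ODE. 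Your route has the advantage of producing the value of $H_0$ transparently from the integral identity $\int_0^1\sqrt{c_n/(H+n(n-1)y^2)}\,dy\le R$ (and your evaluation of that integral is consistent with the stated $\csch^2$ formula, since $c_n\cdot\tfrac{n(n-2)}{4}=n(n-1)$), while the paper's comparison argument delivers strict inequality, hence uniqueness, in one stroke with no equality-case analysis. One remark: the reduction to monotone nonnegative competitors, which you single out as the main obstacle, is not actually needed. Setting $G(y)=\int_0^y\sqrt{c_n/(H+n(n-1)t^2)}\,dt$, the pointwise bound gives $\bigl|\tfrac{d}{dr}G(\tilde\varphi(r))\bigr|\le 1$ for every admissible $\tilde\varphi$, so $G(1)-G(0)=G(\tilde\varphi(R))-G(\tilde\varphi(0))\le R$ holds for arbitrary, possibly sign-changing and non-monotone, competitors; and when $H=H_0$ the equality case forces $\tfrac{d}{dr}G(\tilde\varphi)\equiv 1$, i.e.\ the first-order ODE with the positive root, whose solution with $\tilde\varphi(0)=0$ is unique. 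This closes the only gap you identified, so your argument can be made fully rigorous as stated.
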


\begin{proof}
	
	Let $\mathcal{A}$ denote the set of admissible functions for \eqref{snmin}, i.e. $\psi \in \mathcal{A}$ if $\psi\in C^1([0,R])$, $\psi(0) = 0$ and $\psi(R) = 1$. Following \cite{MR196551}, we show that if $\psi\in\mathcal{A}$, $\psi' > 0$ in $(0, R)$ and $\psi$ solves the ODE $F(\psi, \psi') \equiv \text{constant}$ in $(0, R)$, then $\psi$ is the unique minimiser of \eqref{snmin}. To this end, we show that, for any function $\psi_2\in\mathcal{A}$ different from $\psi$, there exist points $\xi, \xi_2 \in (0, R)$ such that $\psi_2(\xi) = \psi(\xi_2)$ and $\psi_2'(\xi) > \psi'(\xi_2) > 0$. This will suffice to show that $\psi$ is the unique minimiser as $$H(\psi_2) \geq F(\psi_2(\xi), \psi_2'(\xi)) > F(\psi(\xi_2), \psi'(\xi_2)) = \text{ constant } = H(\psi),$$
	where the second inequality uses the fact that $\partial_z F > 0$ for $z > 0$.
	
	As $\psi' > 0$ in $(0, R)$, we may define the inverse $\alpha(y) = \psi^{-1}(y)$. Clearly $\alpha([0, 1]) = [0,R]$ and $\alpha\in C([0, 1])\cap C^1((0, 1))$. Define $r_1 = \sup\{r : r\in[0, R], \psi_2(r) \leq 0\}$ and $r_2 = \inf\{r : r \in (r_1, R], \psi_2(r) \geq 1\}$. Then $0 \leq \psi_2 \leq 1$ in $[r_1, r_2]$, $0 < \psi_2 < 1$ in $(r_1, r_2)$ and we may define the function $g := \alpha(\psi_2)$ on $[r_1, r_2]$ where $g\in C([r_1, r_2])\cap C^1((r_1, r_2))$. 
	 
	 We claim that there exists $r_0\in (r_1, r_2)$ such that $\psi(r_0) \neq \psi_2(r_0)$. Suppose not, then $\psi(r) = \psi_2(r)$ for $r\in[r_1, r_2]$ and so $\psi(r_1) = 0$ and $\psi(r_2) = 1$. By the strict monotonicity of $\psi$, we then have $r_1 = 0$ and $r_2 = R$, which implies $\psi \equiv \psi_2$, a contradiction. 
	 
	 Note that $g(r_1) = 0 \leq r_1$, $g(r_0) \neq r_0$ and $g(r_2) = R \geq r_2$. As $g\in C([r_1, r_2])\cap C^1((r_1, r_2))$, we may apply the mean value theorem to either interval $(r_1, r_0)$ or $(r_0, r_2)$ to deduce the existence of $\xi \in (r_1, r_2)$ such that $g'(\xi) > 1$. Set $\xi_2 = \alpha(\psi_2(\xi)) \in (0, R)$ and observe that $g'(\xi) = \frac{\psi_2'(\xi)}{\psi'(\xi_2)} > 1$. Consequently, $\psi_2(\xi) = \psi(\xi_2)$ and $\psi_2'(\xi) > \psi'(\xi_2) > 0$, as desired.
	 
	Direct computation verifies that the function $\tilde{\varphi}$ from the statement of the lemma solves $$F(\tilde{\varphi}, \tilde{\varphi}') = n(n-1)\csch^2\left(\frac{\sqrt{n(n-2)}}{2} R\right)$$
	and satisfies $\tilde{\varphi}(0) = 0$, $\tilde{\varphi}(R) = 1$ and $\tilde{\varphi}' > 0$ in $(0, R)$. The conclusion follows.
\end{proof}

We may now use the explicit minimiser above with the ideas from Lemma \ref{evlemma} to prove the following bounds on the first eigenvalue:

\begin{lemma}\label{lambdabounds}
	Let $(M,g)$ be a Riemannian manifold and suppose that there exist two open sets $\Omega_1\subset \Omega_2$ with $C^1$ boundary which satisfy \eqref{eqspace} and that the scalar curvature satisfies $S_g \leq -n(n-1)$ on $\Omega_2$.
	If \eqref{dccondexist} holds, then
	\begin{equation}\label{lambda1}
	\lambda < n(n-1)\frac{Vol_g(\Omega_2\setminus \Omega_1)\csch^2\left(\frac{\sqrt{n(n-2)}}{2} R\right) - Vol_g(\Omega_1)}{Vol_g(\Omega_2)} \leq 0.
	\end{equation}
	On the other hand, if \eqref{dccondexist} does not hold, we have
	\begin{equation}\label{lambda2}
	\lambda < n(n-1)\frac{Vol_g(\Omega_2\setminus \Omega_1)\csch^2\left(\frac{\sqrt{n(n-2)}}{2} R\right) - Vol_g(\Omega_1)}{Vol_g(\Omega_1)}.
	\end{equation}
	
\end{lemma}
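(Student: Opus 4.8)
The plan is to test the variational characterisation \eqref{varform} of $\lambda$ against the function $\varphi$ of the form \eqref{phiform} built from the explicit minimiser $\tilde\varphi$ of the sup-norm problem \eqref{snmin} produced in Lemma~\ref{h0lemma}, namely $\tilde\varphi(r)=\sinh\big(\tfrac{\sqrt{n(n-2)}}{2}r\big)\csch\big(\tfrac{\sqrt{n(n-2)}}{2}R\big)$, and then to divide the resulting Dirichlet-energy estimate by a suitable bound for $\|\varphi\|_{L^2(\Omega_2)}^2$ according to whether \eqref{dccondexist} holds. Feeding this $\tilde\varphi$ into the chain of inequalities in \eqref{raylineq} and using $H(\tilde\varphi)=H_0=n(n-1)\csch^2\big(\tfrac{\sqrt{n(n-2)}}{2}R\big)$ from Lemma~\ref{h0lemma}, I obtain
\begin{equation*}
\int_{\Omega_2}\big(c_n|\nabla_g\varphi|^2+S_g\varphi^2\big)\,dV_g\;\le\;N:=n(n-1)\Big(Vol_g(\Omega_2\setminus\Omega_1)\,\csch^2\big(\tfrac{\sqrt{n(n-2)}}{2}R\big)-Vol_g(\Omega_1)\Big),
\end{equation*}
and I note that the inequality $N\le 0$ is precisely condition \eqref{dccondexist}.

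Since $\varphi\equiv 1$ on $\Omega_1$ and $0\le\tilde\varphi\le 1$ on $[0,R]$, we have $Vol_g(\Omega_1)\le\|\varphi\|_{L^2(\Omega_2)}^2\le Vol_g(\Omega_2)$, so the variational principle together with the displayed bound yields $\lambda\le\|\varphi\|_{L^2}^{-2}\int_{\Omega_2}(c_n|\nabla_g\varphi|^2+S_g\varphi^2)\,dV_g\le N/\|\varphi\|_{L^2}^2$. If \eqref{dccondexist} holds then $N\le 0$ and, as $\|\varphi\|_{L^2}^2\le Vol_g(\Omega_2)$, this gives $\lambda\le N/Vol_g(\Omega_2)\le 0$; if \eqref{dccondexist} fails then $N>0$ and, as $\|\varphi\|_{L^2}^2\ge Vol_g(\Omega_1)$, it gives $\lambda\le N/Vol_g(\Omega_1)$. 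All that is missing is strictness of the very first inequality above.

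For that, I would invoke the following dichotomy: either $\lambda$ is strictly smaller than the Rayleigh quotient of $\varphi$, or else (after normalisation) $\varphi$ is a first Dirichlet eigenfunction of $-c_n\Delta_g+S_g$ on $\Omega_2$ and hence, by elliptic regularity, smooth on $\Omega_2$; in the latter case, evaluating $-c_n\Delta_g\varphi+S_g\varphi=\lambda\varphi$ on the open set $\Omega_1$, where $\varphi$ is a positive constant, forces $S_g\equiv\lambda$ there, whence $\lambda\le -n(n-1)$ because $S_g\le -n(n-1)$ on $\Omega_2\supseteq\Omega_1$. Since $-n(n-1)$ is strictly below the right-hand side of \eqref{lambda1} — indeed $N/Vol_g(\Omega_2)>-n(n-1)$ because $Vol_g(\Omega_2\setminus\Omega_1)\csch^2(\tfrac{\sqrt{n(n-2)}}{2}R)>0$ and $Vol_g(\Omega_1)\le Vol_g(\Omega_2)$ — and also strictly below the right-hand side of \eqref{lambda2} (which is positive when \eqref{dccondexist} fails), the dichotomy forces strict inequality in both \eqref{lambda1} and \eqref{lambda2}.

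The substantive work, namely solving the sup-norm minimisation \eqref{snmin} in closed form and establishing the Rayleigh estimate \eqref{raylineq}, has already been carried out in Lemmas~\ref{evlemma} and~\ref{h0lemma}; so the only step here that needs genuine care is the strictness, i.e.\ establishing the dichotomy above, which I expect to be the main obstacle even though the argument itself is short (inspect the eigenvalue equation on $\Omega_1$). Everything else is direct substitution, plus bookkeeping of which of $Vol_g(\Omega_2)$ or $Vol_g(\Omega_1)$ is the correct denominator in the two cases.
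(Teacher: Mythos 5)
Your proposal is correct and follows the same route as the paper: the same test function \eqref{phiform} built from the explicit minimiser of \eqref{snmin}, the same Rayleigh-quotient estimate \eqref{raylineq}, and the same bookkeeping of $Vol_g(\Omega_1)\le\|\varphi\|_{L^2}^2\le Vol_g(\Omega_2)$ to select the denominator in the two cases. The only point of divergence is the strictness step, which you correctly identified as the one place needing care. The paper rules out saturation by noting that a minimiser of \eqref{varform} would be smooth in $\Omega_2$, contradicting the gradient discontinuity of $\varphi$ across $\partial\Omega_1$ (since $\tilde{\varphi}'(R)\neq 0$ while $\varphi$ is constant in $\Omega_1$). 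You instead keep the saturated branch alive and evaluate the eigenvalue equation on $\Omega_1$, where $\varphi\equiv 1$ forces $S_g\equiv\lambda\le -n(n-1)$, which you then check is strictly below both right-hand sides because $Vol_g(\Omega_2\setminus\Omega_1)>0$. Both arguments are valid; the paper's is a one-line contradiction, while yours trades the explicit derivative computation for a slightly longer but equally elementary case analysis.
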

\begin{proof}
	
	Define $\varphi$ to be the function of the form \eqref{phiform} where $\tilde{\varphi}$ is taken to be the minimiser obtained in Lemma \ref{h0lemma}. Substituting $\varphi$ in the integral in \eqref{varform} and using \eqref{raylineq} we obtain
	\begin{equation}\label{lambdaineq}
		\lambda \leq n(n-1)\frac{Vol_g(\Omega_2\setminus \Omega_1)\csch^2\left(\frac{\sqrt{n(n-2)}}{2} R\right) - Vol_g(\Omega_1)}{\|\varphi\|_{L^2(\Omega_2)}} \ .
	\end{equation}
	If the inequality above were saturated, then $\varphi$ (up to a harmless normalisation) would be a minimiser of \eqref{varform} and hence would be smooth in $\Omega_2$, contradicting the gradient discontinuity at the boundary $\partial\Omega_1$ observed using the explicit form for $\tilde{\varphi}$ obtained in Lemma \ref{h0lemma} and recalling \eqref{phiform}. Consequently, inequality \eqref{lambdaineq} above is, in fact, strict.
	
	In the case that \eqref{dccondexist} holds, the numerator in \eqref{lambdaineq} is non-positive and so, using that $\|\varphi\|_{L^2(\Omega_2)} < Vol_g(\Omega_2)$, we obtain \eqref{lambda1}. Likewise, if \eqref{dccondexist} does not hold, the numerator is positive and we use that $\|\varphi\|_{L^2(\Omega_2)} > Vol_g(\Omega_1)$ to obtain \eqref{lambda2}.
\end{proof}

	We may now prove Theorem \ref{dcexistthm} as an immediate consequence of the lemma above:
	
	\begin{proof}[Proof of Theorem \ref{dcexistthm}]
		
		Combined with assumption \eqref{asymneg}, the existence of a solution to the Yamabe problem now follows from \cite[Theorem C]{aviles1988} provided that we can show that the first eigenvalue $\lambda$ of the conformal Laplacian  on $\Omega_2$ is negative. This negativity is an immediate consequence of assumption \eqref{dccondexist} and estimate \eqref{lambda1} from Lemma \ref{lambdabounds}.
	\end{proof}

We conclude the section with the following corollaries of Theorem \ref{dcexistthm} for geodesic balls and for annuli which will aid in our discussion regarding warped product and multiply warped product type manifolds in the next subsection. In particular, for geodesic balls we have:
\begin{labelledcorollary}
	Let $(M,g)$ be a Riemannian manifold and suppose there exist constants $\alpha, R >0$ and some geodesic ball $B_{(1+\alpha)R}$ which satisfies
	\begin{equation}
	\frac{Vol_g(B_{(1+\alpha^{-1})R}\setminus B_{\alpha^{-1}R})}{Vol_g(B_{\alpha^{-1}R})} \leq \sinh^2\left(\frac{ R\sqrt{n(n-2)}}{2}\right)
	\end{equation}
	and on which the scalar curvature satisfies $S_g \leq -n(n-1)$. Then, the conformal Laplacian $-c_n\Delta_g + S_g$ for $(M, g)$ has a negative first eigenvalue on $B_{(1+\alpha^{-1})R}$.
\end{labelledcorollary}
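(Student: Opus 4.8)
The plan is to run the argument behind Lemma~\ref{lambdabounds} (hence Theorem~\ref{dcexistthm}) with $\Omega_1 = B_{\rho_1}$ and $\Omega_2 = B_{\rho_2}$ the concentric geodesic balls about the centre $p$ of the given ball, where $\rho_1 := \alpha^{-1}R$ and $\rho_2 := (1+\alpha^{-1})R$, so that $\rho_2 - \rho_1 = R$ and the hypothesised volume inequality reads $Vol_g(B_{\rho_2}\setminus B_{\rho_1})/Vol_g(B_{\rho_1}) \le \sinh^2(\tfrac{\sqrt{n(n-2)}}{2}R)$. The one genuine subtlety --- and the place I expect to spend the most care --- is that the structural condition \eqref{eqspace} need not hold for concentric geodesic balls: the distance from a point of $\partial B_{\rho_1}$ to $\partial B_{\rho_2}$ is always at least $\rho_2-\rho_1$ by the triangle inequality, but it may strictly exceed it and may vary with the point (for instance on a flat cylinder with large radii). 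So I would not quote Theorem~\ref{dcexistthm} verbatim; instead I would replace the function $d_g(\cdot,\partial\Omega_2)$ used in its proof by the radial variable $r(x) := \rho_2 - d_g(p,x)$ on $\overline{B_{\rho_2}\setminus B_{\rho_1}}$. This $r$ is Lipschitz with $|\nabla r| \le 1$ almost everywhere, it vanishes on $\partial B_{\rho_2}$, it equals $R$ on $\partial B_{\rho_1}$, and it takes values in $[0,R]$ throughout the annulus, which are exactly the properties of $r$ used in deriving \eqref{raylineq}.

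With this replacement in hand I would then copy the argument of Lemma~\ref{lambdabounds} essentially word for word. Let $\tilde\varphi$ be the explicit minimiser of \eqref{snmin} from Lemma~\ref{h0lemma}, so that $\sup_{[0,R]} F(\tilde\varphi,\tilde\varphi') = H_0 = n(n-1)\csch^2(\tfrac{\sqrt{n(n-2)}}{2}R)$ and $\tilde\varphi(R)=1$, and define $\varphi$ as in \eqref{phiform}: $\varphi \equiv 1$ on $B_{\rho_1}$ and $\varphi = \tilde\varphi(r(\cdot))$ on the annulus. Then $\varphi$ is globally Lipschitz and vanishes on $\partial B_{\rho_2}$, so $\varphi \in H^1_0(B_{\rho_2})$ (to avoid any discussion of the regularity of distance spheres one may first approximate by the compactly supported truncations $\max(\varphi - \epsilon, 0)$). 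Plugging $\varphi$ into the variational characterisation \eqref{varform} and running the chain \eqref{raylineq} --- using $S_g \le -n(n-1)$ on $B_{\rho_2}$, $|\nabla\varphi|^2 \le \tilde\varphi'(r)^2|\nabla r|^2 \le \tilde\varphi'(r)^2$, and the identity $\sup_{[0,R]} F(\tilde\varphi,\tilde\varphi') = H_0$ --- yields
\[
\lambda\,\|\varphi\|_{L^2(B_{\rho_2})}^2 \;\le\; -n(n-1)\,Vol_g(B_{\rho_1}) + H_0\,Vol_g(B_{\rho_2}\setminus B_{\rho_1}),
\]
and since $\csch^2(\tfrac{\sqrt{n(n-2)}}{2}R) = \sinh^{-2}(\tfrac{\sqrt{n(n-2)}}{2}R)$, the volume hypothesis is precisely $H_0\,Vol_g(B_{\rho_2}\setminus B_{\rho_1}) \le n(n-1)\,Vol_g(B_{\rho_1})$, so the right-hand side is $\le 0$, whence $\lambda \le 0$.

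To upgrade this to the strict inequality claimed by the corollary I would argue exactly as at the end of the proof of Lemma~\ref{lambdabounds}: if $\lambda = 0$, then $\varphi$ attains the infimum in \eqref{varform}, so a suitable normalisation of $\varphi$ is a first Dirichlet eigenfunction of the conformal Laplacian on $B_{\rho_2}$ and is therefore smooth in the interior by elliptic regularity; but the explicit form of $\tilde\varphi$ from Lemma~\ref{h0lemma} has $\tilde\varphi'(R) \ne 0$, so $\varphi$ has a genuine jump in its gradient across $\partial B_{\rho_1}$ (at every point of $\partial B_{\rho_1}$ where $d_g(p,\cdot)$ is differentiable, an open dense set since the cut locus has measure zero), contradicting smoothness. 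Hence $\lambda < 0$, which is the assertion of the corollary. The remaining ingredients --- the $H^1_0$ membership of $\varphi$, the stated properties of $r$, and the evaluation of $H_0$ --- are routine given Lemmas~\ref{h0lemma} and~\ref{lambdabounds}, so the crux really is just the observation that one must use $\rho_2 - d_g(p,\cdot)$ in place of $d_g(\cdot,\partial B_{\rho_2})$.
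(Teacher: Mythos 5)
Your argument is correct and is in substance the paper's own: the intended proof of this corollary is the one--line application of Theorem \ref{dcexistthm} with $\Omega_1=B_{\alpha^{-1}R}$, $\Omega_2=B_{(1+\alpha^{-1})R}$, i.e.\ exactly the test--function computation of Lemmas \ref{h0lemma} and \ref{lambdabounds} that you reproduce. What you add is the (correct) observation that this application is not quite literal: for concentric geodesic balls one only has $d_g(x,\partial B_{\rho_2})\ge \rho_2-\rho_1$ for $x\in\partial B_{\rho_1}$, and strict inequality can occur (your cylinder example works, e.g.\ $x$ antipodal to the centre in the circle factor), so hypothesis \eqref{eqspace} --- and, for that matter, the $C^1$ regularity of the geodesic spheres --- need not hold verbatim. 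Your repair, replacing $d_g(\cdot,\partial\Omega_2)$ by the $1$--Lipschitz function $r(x)=\rho_2-d_g(p,x)$, which takes values in $[0,R]$ on the annulus and has the correct boundary values, is exactly what is needed (an equivalent fix is to truncate, using $\min(d_g(\cdot,\partial B_{\rho_2}),R)$); everything downstream, namely \eqref{raylineq}, the value $H_0=n(n-1)\csch^2\bigl(\tfrac{\sqrt{n(n-2)}}{2}R\bigr)$ from Lemma \ref{h0lemma}, and the passage from $\lambda\le 0$ to $\lambda<0$, goes through as in Lemma \ref{lambdabounds}. One small point: your justification for locating a point of $\partial B_{\rho_1}$ at which the gradient genuinely jumps is a bit quick (the cut locus having measure zero in $M$ does not immediately give a dense set of differentiability points \emph{on the sphere} $\partial B_{\rho_1}$). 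A cleaner route to strictness that avoids the cut locus entirely: if $\lambda=0$ then your $\varphi$ attains the infimum in \eqref{varform}, hence is a weak (and so smooth) solution of $-c_n\Delta_g\varphi+S_g\varphi=0$; since $\varphi\equiv 1$ on the open set $B_{\alpha^{-1}R}$ this forces $S_g\equiv 0$ there, contradicting $S_g\le-n(n-1)$.
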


In the case of multiply warped product metrics (see \eqref{mwp}), which have a radial fibre whose coordinate we denote by $r$, we fix some value $r_0\in \R$ and define the annular region $A_R(r_0) = \{x\in M : \left|r(x)-r_0\right| \leq R\}.$ We may then obtain as a corollary of Theorem \ref{dcexistthm},

\begin{labelledcorollary}\label{thmBp}
	Let $(M,g)$ be a Riemannian manifold and suppose there exist constants $\alpha, R >0$ and some annulus $A_{(1+\alpha)R}(r_0)$ which satisfies
	\begin{equation}\label{dcboundp}
	\frac{Vol_g(A_{(1+\alpha^{-1})R}(r_0)\setminus A_{\alpha^{-1}R}(r_0))}{Vol_g(A_{\alpha^{-1}R}(r_0))} \leq \sinh^2\left(\frac{R\sqrt{n(n-2)}}{2}\right)
	\end{equation}
	and on which the scalar curvature satisfies $S_g \leq -n(n-1)$. Then, the conformal Laplacian $-c_n\Delta_g + S_g$ for $(M, g)$ has a negative first eigenvalue on $A_{(1+\alpha^{-1})R}(r_0)$.
\end{labelledcorollary}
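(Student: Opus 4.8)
The plan is to obtain Corollary \ref{thmBp} as a direct application of Theorem \ref{dcexistthm}, with the two open sets taken to be concentric open annuli about the central hypersurface $\{r=r_0\}$. Precisely, I would set
\[
\Omega_1 := \{x\in M : |r(x)-r_0| < \alpha^{-1}R\}, \qquad \Omega_2 := \{x\in M : |r(x)-r_0| < (1+\alpha^{-1})R\},
\]
the interiors of $A_{\alpha^{-1}R}(r_0)$ and $A_{(1+\alpha^{-1})R}(r_0)$; these are well-defined open subsets of $M$ because the annuli occurring in \eqref{dcboundp} are assumed to lie in $M$, and clearly $\Omega_1\subset\Omega_2$. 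It then remains to check the three hypotheses of Theorem \ref{dcexistthm} for this pair.

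Two of the three are essentially bookkeeping. For the $C^1$ regularity of the boundaries: since $g$ is the multiply warped product \eqref{mwp} with globally defined radial coordinate $r$, the function $r$ is smooth with $dr$ nowhere vanishing, so every level set $\{r=c\}$ contained in $M$ is a smooth embedded hypersurface; thus $\partial\Omega_1 = \{r=r_0-\alpha^{-1}R\}\cup\{r=r_0+\alpha^{-1}R\}$ and $\partial\Omega_2 = \{r=r_0-(1+\alpha^{-1})R\}\cup\{r=r_0+(1+\alpha^{-1})R\}$ have $C^1$ boundary. For the volume ratio condition \eqref{dccondexist}: open and closed annuli differ by a finite union of hypersurfaces and hence have equal $g$-volume, so $Vol_g(\Omega_2\setminus\Omega_1)/Vol_g(\Omega_1)$ is exactly the left-hand side of \eqref{dcboundp}, which is bounded by $\sinh^2\big(\tfrac{R\sqrt{n(n-2)}}{2}\big)$ by assumption; and $S_g\le -n(n-1)$ on $\Omega_2$ holds by the scalar curvature hypothesis.

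The only step that genuinely uses the warped-product structure — and hence the crux of the argument — is the spacing condition \eqref{eqspace}, namely that $d_g(x,\partial\Omega_2)=R$ for every $x\in\partial\Omega_1$. Writing $g = dr^2 + \sum_i f_i^2(r)\,h_i$, the field $\partial_r$ is unit and orthogonal to the fibres, so $\nabla r = \partial_r$ has $|\nabla r|\equiv 1$ and its integral curves are unit-speed geodesics; moreover for any $C^1$ curve $\gamma$ one has $|\dot\gamma|_g^2 \ge \big(\tfrac{d}{dt}(r\circ\gamma)\big)^2$. Hence the $g$-distance from any point $x$ to a level set $\{r=c\}$ equals $|r(x)-c|$, realised by the radial segment. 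Applying this to $x\in\partial\Omega_1$, for which $r(x)=r_0\pm\alpha^{-1}R$, the distance to the nearer component of $\partial\Omega_2$ is $R$ while the distance to the farther one is $(2\alpha^{-1}+1)R>R$, so $d_g(x,\partial\Omega_2)=R$. The mild point to be careful about here is precisely this last comparison — that the minimising path escapes outward through the nearer level set rather than running inward around the annulus — and this is exactly why one needs the estimate $|\dot\gamma|_g^2\ge\big(\tfrac{d}{dt}(r\circ\gamma)\big)^2$ along arbitrary curves, not merely the computation along radial ones.

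With all hypotheses of Theorem \ref{dcexistthm} verified, it yields that $-c_n\Delta_g + S_g$ has a negative first Dirichlet eigenvalue on $\Omega_2$, i.e.\ on $A_{(1+\alpha^{-1})R}(r_0)$ (using the standard convention that the first Dirichlet eigenvalue on the closed annulus is that of its interior $\Omega_2$), which is the assertion.
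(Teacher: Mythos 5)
Your proposal is correct and is exactly the argument the paper intends: the paper disposes of both corollaries with the single remark that they ``follow directly from Theorem \ref{dcexistthm} with the appropriate choices made for $\Omega_1$, $\Omega_2$ and $R$'', and your choices and verifications (in particular using $|\dot\gamma|_g^2\ge\bigl(\tfrac{d}{dt}(r\circ\gamma)\bigr)^2$ in the multiply warped product to establish the spacing condition \eqref{eqspace}) are the right ones.
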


The proofs of the two corollaries follow directly from Theorem \ref{dcexistthm} with the appropriate choices made for $\Omega_1$, $\Omega_2$ and $R$.

\subsection{Sharpness of the eigenvalue estimate}\label{examples}

In this section we demonstrate that the volume ratio condition \eqref{dccondexist} in Theorem \ref{dcexistthm} is sharp for the negativity of the first eigenvalue. In particular, we will show:
\begin{prop}\label{sharpprop}
	Let $\beta > 0$. There exists a constant $C > 0$, such that for any large $R$ there is a complete, non-compact manifold $(M, g)$ and bounded domains $\Omega_1 \subset \Omega_2$ satisfying \eqref{eqspace} with volume ratio 
		\begin{equation}
		\frac{Vol_g(\Omega_2\setminus \Omega_1)}{Vol_g(\Omega_1)} < Ce^{\beta R}
		\label{Eq:VRExp}
		\end{equation} and with scalar curvature satisfying $S_g \leq -n(n-1)$ on $M$ for which the first eigenvalue for the conformal Laplacian on $\Omega_2$ satisfies
	\begin{equation}
		\lambda > \begin{cases}
		\frac{1}{(n-1)(n-2)}\beta^2 &\text{ if } \beta > n-1,\\
		\frac{n-1}{n-2}\beta^2 - n(n-1) &\text{ if } \beta \leq n - 1.
		\end{cases}
	\end{equation}
	In particular, if $\beta > \sqrt{n(n-2)}$, then $\lambda$ is positive.
\end{prop}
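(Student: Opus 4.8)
The plan is to realise the sharp examples as multiply warped products over $\R$ with flat torus fibres, for which the scalar curvature is \emph{constant}, the volume grows exactly at the exponential rate $\beta$, and the bottom of the $L^2$-spectrum of $-\Delta_g$ equals $\beta^2/4$. Concretely, write $n-1 = n_1 + n_2$ with $n_1 = 1$ and $n_2 = n-2$, let $(\mathbb{T}^{n_i}, h_i)$ be flat tori, and on $M := \R \times \mathbb{T}^{n_1} \times \mathbb{T}^{n_2}$ consider $g := dr^2 + e^{2 a_1 r} h_1 + e^{2 a_2 r} h_2$ for real constants $a_1, a_2$ to be chosen. With $H := n_1 a_1 + n_2 a_2$ and $A := n_1 a_1^2 + n_2 a_2^2$, a direct computation of the sectional curvatures (or the standard scalar curvature formula for multiply warped products) gives the constant value $S_g \equiv -(A + H^2)$, while the volume form is $e^{H r}\, dr \wedge dV_{h_1} \wedge dV_{h_2}$. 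I would then pick $a_1, a_2$ so that $H = \beta$ and $A = \max\!\big(\tfrac{\beta^2}{n-1},\, n(n-1) - \beta^2\big)$: this is possible since, after eliminating $a_1 = \beta - (n-2) a_2$, the resulting quadratic for $a_2$ has discriminant $4(n-2)\big((n-1) A - \beta^2\big) \ge 0$, the inequality $A \ge \tfrac{\beta^2}{n-1}$ being Cauchy--Schwarz. The constraint $A \ge n(n-1) - \beta^2$ is exactly what makes $S_g = -(A + \beta^2) \le -n(n-1)$ on all of $M$. (When $\beta \ge n-1$ the minimiser has $a_1 = a_2 = \beta/(n-1)$, so $g$ is the ordinary warped product $dr^2 + e^{2\beta r/(n-1)} h$ with $h$ flat on $\mathbb{T}^{n-1}$.) Note $(M,g)$ is complete and non-compact.

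Next I would verify the geometric hypotheses. Fix $r_0 \in \R$ and a small $\rho > 0$ depending only on $\beta$, and set $\Omega_1 := \{\,|r - r_0| < \rho\,\}$ and $\Omega_2 := \{\,|r - r_0| < \rho + R\,\}$, both with smooth product boundary. Since $g_{rr} = 1$ and $g \ge dr^2$, any curve joining $\{r = r_0 \pm \rho\}$ to $\{r = r_0 \pm (\rho + R)\}$ has length at least the corresponding increment of $r$, with equality along the integral curves of $\partial_r$; hence $d_g(x, \partial\Omega_2) = R$ for every $x \in \partial\Omega_1$, which is \eqref{eqspace}. For the volume ratio, $Vol_g(\{|r - r_0| < s\}) = \tfrac{1}{\beta}\,Vol_{h}(\mathbb{T}^{n-1})\, e^{\beta r_0}\big(e^{\beta s} - e^{-\beta s}\big)$, so
\[
\frac{Vol_g(\Omega_2 \setminus \Omega_1)}{Vol_g(\Omega_1)}
= \frac{e^{\beta(\rho + R)} - e^{-\beta(\rho + R)} - e^{\beta \rho} + e^{-\beta \rho}}{e^{\beta \rho} - e^{-\beta \rho}}
< \frac{e^{\beta R}}{1 - e^{-2\beta \rho}} ,
\]
which is \eqref{Eq:VRExp} with $C := (1 - e^{-2\beta \rho})^{-1}$, independent of $R$; and $S_g \le -n(n-1)$ holds on $M$ as noted.

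For the eigenvalue, since $S_g$ is the constant $s_0 := -(A + \beta^2)$, the variational formula \eqref{varform} gives $\lambda = s_0 + c_n\, \lambda_1(-\Delta_g, \Omega_2)$, where $\lambda_1(-\Delta_g, \Omega_2)$ is the first Dirichlet eigenvalue of $-\Delta_g$ on $\Omega_2$. The decisive estimate is the weighted one-dimensional Poincaré inequality
\[
\int_c^d \psi'(r)^2\, e^{\beta r}\, dr > \frac{\beta^2}{4} \int_c^d \psi(r)^2\, e^{\beta r}\, dr \qquad \text{for } 0 \not\equiv \psi \in H^1_0(c,d),
\]
proved by writing $\psi = e^{-\beta r/2}\chi$ and completing the square. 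Integrating this over the torus variables and using $|\nabla_g f|^2 \ge (\partial_r f)^2$ yields $\int_{\Omega_2} |\nabla_g f|^2\, dV_g > \tfrac{\beta^2}{4}\int_{\Omega_2} f^2\, dV_g$ for every $0 \not\equiv f \in H^1_0(\Omega_2)$; since the Rayleigh quotient attains its infimum on the bounded smooth domain $\Omega_2$, this gives $\lambda_1(-\Delta_g, \Omega_2) > \beta^2/4$. Hence $\lambda > s_0 + c_n \beta^2/4 = -(A + \beta^2) + \tfrac{(n-1)\beta^2}{n-2}$, and substituting the two values of $A$ produces $\lambda > \tfrac{1}{(n-1)(n-2)}\beta^2$ when $\beta > n-1$ and $\lambda > \tfrac{n-1}{n-2}\beta^2 - n(n-1)$ when $\beta \le n-1$. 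Finally, for $\beta > \sqrt{n(n-2)}$ both expressions are positive (using $\tfrac{n-1}{n-2}\, n(n-2) = n(n-1)$ on the second branch and $\sqrt{n(n-2)} < n-1$), so $\lambda > 0$.

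I expect the main obstacle to be computational rather than conceptual: getting the scalar curvature formula for the multiply warped metric exactly right (so that $S_g \equiv -(A + H^2)$ is genuinely constant) and confirming that the prescribed value of $A$ is simultaneously achievable and the smallest admissible one. The single genuinely substantive input is the spectral gap $\lambda_1(-\Delta_g, \Omega_2) > \beta^2/4$ — the multiply warped analogue of the fact that the bottom of the $L^2$-spectrum of $-\Delta_{\mathbb{H}^n}$ is $(n-1)^2/4$ — which, combined with the factor $c_n$, is precisely what pins down the constants appearing in the statement.
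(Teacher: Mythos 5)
Your proposal is correct, and the examples you construct are in substance the same ones the paper uses: a multiply warped product over $\R$ with flat torus fibres and warping functions $e^{a_i r}$ (the paper's Example \ref{expexmp} takes $n-1$ circle factors with $p_i=e^{\alpha_i r}$, and its scalar curvature formula reduces to your identity $S_g\equiv-(A+H^2)$), together with slab-type domains in the $r$-coordinate for which \eqref{eqspace} and \eqref{Eq:VRExp} are verified exactly as you do; your choice of the minimal admissible $A=\max\bigl(\tfrac{\beta^2}{n-1},\,n(n-1)-\beta^2\bigr)$ reproduces the paper's two scalar curvature values $-\tfrac{n}{n-1}\beta^2$ and $-n(n-1)$. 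The one place you genuinely diverge is the spectral lower bound. The paper invokes simplicity of the principal Dirichlet eigenvalue together with the torus symmetry to conclude that the first eigenfunction is radial, reduces to the constant-coefficient ODE $\varphi''+\beta\varphi'+\tfrac{\lambda-S_g}{c_n}\varphi=0$ with zero boundary data, and forces its characteristic roots to be non-real; you instead prove $\lambda_1(-\Delta_g,\Omega_2)>\beta^2/4$ directly from the weighted one-dimensional Poincar\'e inequality obtained via the substitution $\psi=e^{-\beta r/2}\chi$, sliced over the fibre. The two mechanisms give the identical threshold (non-real roots of $\mu^2+\beta\mu+\lambda_1=0$ is precisely $\lambda_1>\beta^2/4$), so the resulting bounds on $\lambda$ agree; your variational route has the mild advantage of not needing the symmetry/simplicity reduction, while the paper's ODE route is shorter once that reduction is granted.
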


Note that, for large $R$, \eqref{Eq:VRExp} implies \eqref{Eq:VRSinh} with a slightly larger $\beta$. Remark \ref{sharpremark} follows.

We consider the product manifolds $M =\R\times N_1 \times \ldots \times N_m$ where $m\geq 1$ and each $N_i$ is a compact manifold of dimension $n_i\geq 1$ with $\sum_i n_i = n - 1$. We endow $M$ with a (multiply) warped metric $g$ of the form
\begin{equation}\label{mwp}
g = dr^2 + \sum_i p_i^2(r)h_i
\end{equation}
where $p_i:\R \rightarrow (0,\infty)$ are warping functions and  $h_i$ are metrics on each of the $N_i$. A computation shows that $(M,g)$ has Laplacian $\Delta_g = \partial_{rr} + \sum_i n_i \frac{p_i^\prime}{p_i} \partial_r + \sum_i \frac{1}{p^2_i} \Delta_{h_i}$ and 
\begin{equation}\label{scal}
S_g = -2\sum_i n_i \frac{p_i^{\prime\prime}}{p_i} - \sum_i n_i(n_i-1)\left(\frac{p_i^\prime}{p_i}\right)^2 - 2\sum_{i<j} n_i n_j\frac{p_i^\prime p_j^\prime}{p_ip_j} +\sum_i \frac{S_{h_i}}{p^2_i} \ .
\end{equation}

A particularly convenient family of warped product metrics for our purposes will be:
\begin{exmp}\label{expexmp}
	Let $(M,g)$ be a multiply warped product as in \eqref{mwp} with each $N_i = \mathbb{S}^1$ (so that $m = n-1$) and $p_i = e^{\alpha_i r}$ for some $\alpha_i\in\R$. We note that
	\begin{equation*}
	S_g = -2\left(\sum_i \alpha_i\right)^2 + 2\sum_{i<j} \alpha_i\alpha_j.
	\end{equation*}
	As a consequence, for any given $\beta$ and constant $C \geq -\frac{n}{n-1}\beta^2$, there exists a choice of $\alpha_i$ such that $\beta = \sum_i \alpha_i$ and $S_g \equiv -C$. In particular, if $|\beta|\leq n-1$, then there exists a choice of $\alpha_i$ with $\sum_i \alpha_i = \beta$ so that $S_g \equiv -n(n-1)$.
\end{exmp} 

We now turn to a study of the volume ratios of concentric balls in the example above.

\begin{lemma}\label{vrlemma}
	Let $(M,g)$ be as in Example \ref{expexmp}. Then there exists a constant $C$ depending only on $\alpha_i$ such that, for any $R$ sufficiently large, one can find concentric balls $B_{2R}$ and $B_R$ such that
	\begin{equation*}
	\frac{Vol_{g}(B_{2R}\setminus B_R)}{Vol_{g}(B_{R})} \leq Ce^{\beta R}.
	\end{equation*}
\end{lemma}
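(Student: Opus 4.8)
The plan is to take $B_{2R}$ and $B_R$ to be concentric \emph{geodesic} balls about a fixed point $x_0=(0,\theta_0)\in M$, bounding $Vol_g(B_{2R})$ from above by the volume of the slab $\{|r|<2R\}$ and $Vol_g(B_R)$ from below by the volume of a full sub-slab $[0,R-C_0)\times N$, where $C_0$ depends only on $\{\alpha_i\}$ and $\{h_i\}$. Writing $N=\prod_{i=1}^{n-1}N_i$ with the flat product metric $h=\bigoplus_i h_i$, the volume element of \eqref{mwp} with $p_i=e^{\alpha_i r}$ is $dV_g=e^{\beta r}\,dV_h$ with $\beta=\sum_i\alpha_i$, which I will assume positive (the only case needed in Proposition~\ref{sharpprop}; only then is the target bound compatible with the strict inclusion $B_R\subsetneq B_{2R}$). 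Since the metric is block-diagonal with $g_{rr}=1$ we have $|\nabla_g r|_g=1$, so $r$ is $1$-Lipschitz and $B_\rho(x_0)\subset\{|r|<\rho\}$; integrating $dV_g$ over this slab gives at once
\[
Vol_g(B_{2R})\le \frac{Vol_h(N)}{\beta}\,e^{2\beta R}.
\]

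The crux of the argument, and the step I expect to need the most care, is a matching lower bound, which I would deduce from the distance estimate
\[
d_g\big(x_0,(r_1,\theta)\big)\le r_1+C_0\qquad\text{for all }r_1\ge 0,\ \theta\in N .
\]
The difficulty is that the fibre $\{r\}\times N$ carries the metric $\bigoplus_i e^{2\alpha_i r}h_i$, whose factors with $\alpha_i>0$ grow without bound as $r\to\infty$, so a path that first travels out in $r$ and only afterwards moves along the fibre is far too long. I would instead route each angular coordinate through a depth at which its factor has bounded size: to move the $i$-th coordinate from $\theta_0^i$ to $\theta^i$, first travel in $r$ to a level $a_i$ with $e^{\alpha_i a_i}\,\mathrm{diam}(N_i,h_i)\le 1$ — a fixed negative level if $\alpha_i>0$, a fixed nonnegative level truncated at $r_1$ if $\alpha_i<0$, any level in $[0,r_1]$ if $\alpha_i=0$ — and perform that one-coordinate move there, at cost $\le 1$ (or $\le\mathrm{diam}(N_i,h_i)$ in the truncated subcase). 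Carrying out all the $\alpha_i>0$ moves at a single deepest level $a_*\le 0$ and all remaining moves at a single level $a_{**}\in[0,r_1]$, the concatenated path $0\to a_*\to a_{**}\to r_1$ has total $r$-length $r_1+2|a_*|$ and total fibre length at most $(n-1)+\sum_i\mathrm{diam}(N_i,h_i)$; taking $C_0$ to be the sum of these last two constants yields the estimate.

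With the estimate in hand, for $R>C_0$ we get $B_R(x_0)\supset[0,R-C_0)\times N$, hence $Vol_g(B_R)\ge\frac{Vol_h(N)}{\beta}\big(e^{\beta(R-C_0)}-1\big)\ge\tfrac12\frac{Vol_h(N)}{\beta}e^{\beta(R-C_0)}$ for $R$ large. Dividing the two volume bounds gives $Vol_g(B_{2R})/Vol_g(B_R)\le 2e^{\beta C_0}e^{\beta R}$ for all large $R$, and subtracting $1$ gives $Vol_g(B_{2R}\setminus B_R)/Vol_g(B_R)\le Ce^{\beta R}$ with $C=2e^{\beta C_0}$, as desired. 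Finally I would note in passing that $(M,g)$ is complete and non-compact and that $B_R,B_{2R}$ are precompact and satisfy the equidistance condition \eqref{eqspace} (by the triangle inequality together with completeness), which is what is needed to feed this lemma into Proposition~\ref{sharpprop}.
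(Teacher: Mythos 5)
Your argument is correct and essentially reproduces the paper's proof: both sandwich the geodesic balls between coordinate slabs via the two-sided comparison $|r(p)-r(p_0)| \le d_g(p,p_0) \le |r(p)-r(p_0)| + C_0$ and then integrate $e^{\beta r}$, the only difference being that the paper obtains $C_0$ directly as the (finite) intrinsic diameter of the fixed cross-section $\{r_0\}\times\mathbb{T}^{n-1}$, while you construct it by detouring to levels where each warping factor is at most $1$. Your restriction to $\beta>0$ matches what the paper's computation implicitly assumes and is all that Proposition \ref{sharpprop} requires; the only slip is the parenthetical claim that concentric geodesic balls satisfy \eqref{eqspace} --- the triangle inequality gives only $d_g(x,\partial B_{2R})\ge R$ for $x\in\partial B_R$ and equality can fail (e.g.\ on a flat cylinder) --- but this lies outside the statement being proved, and Proposition \ref{sharpprop} in fact uses coordinate annuli, for which \eqref{eqspace} does hold.
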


\begin{proof}

Fixing some large $R$ and taking some ball $B_R(p_0)$, writing $p_0 = (r_0, x_0)$, we have immediately that $$B_R(p_0) \subset \{p : |r(p) - r_0| \leq R\}.$$
On the other hand, defining $S_0 := \{r_0\}\times \mathbb{T}^{n-1}$ we have that $$ d_g(p,p_0) \leq d_g(p,S_0) + \text{diam}(S_0) = r(p)-r_0 + e^{r_0}\text{diam}(\mathbb{T}^{n-1}).$$
Consequently, writing $A_0 = e^{r_0}\text{diam}(\mathbb{T}^{n-1})$, we have that $$\{|r-r_0| \leq R - A_0\} \subset B_R(p_0) \subset \{|r-r_0|\leq R\}$$
from which we obtain
$$Vol_g(B_{2R}(p_0)) \leq \int_{\mathbb{T}^{n-1}} \int_{r_0 - 2R}^{r_0 + 2R}e^{\beta r} dr \ dx = \beta Vol(\mathbb{T}^{n-1})\left(e^{\beta(r_0 + 2R)} - e^{\beta(r_0 - 2R)}\right)$$
and
\begin{align*}
Vol_g(B_R(p_0)) &\geq \int_{\mathbb{T}^{n-1}} \int_{r_0 - (R - A_0)}^{r_0 + (R - A_0)}e^{\beta r} dr \ dx 
= \beta Vol(\mathbb{T}^{n-1})e^{-A_0}\left(e^{\beta(r_0 + R)} - e^{\beta(r_0 - R + 2A_0)}\right)
\end{align*}
from which we may estimate
\begin{equation*}
\frac{Vol_g(B_{2R}\setminus B_R)}{Vol_g(B_R)} \leq e^{A_0}\frac{e^{\beta(r_0 + 2R)} - e^{\beta(r_0 - 2R)}}{e^{\beta(r_0 + R)} - e^{\beta(r_0 - R + 2A_0)}} - 1.
\end{equation*}
Consequently, $(M, g)$ satisfies the following bound on the volume ratio for large balls
\begin{equation}
\frac{Vol_g(B_{2R}\setminus B_R)}{Vol_g(B_R)} \leq Ce^{\beta R}\left(1 + \mathcal{O}(e^{-2(n-1)R})\right) - 1
\end{equation}
and so, taking $R$ large, we obtain
\begin{equation*}
\frac{Vol_{g}(B_{2R}\setminus B_R)}{Vol_{g}(B_{R})} \leq Ce^{\beta R}.
\end{equation*}
for some large constant $C>0$ depending only on $\alpha_i$. 
\end{proof}

We are now ready to prove Proposition \ref{sharpprop}. However, before this, we make the following remark:

\begin{remark}
	As a consequence of Lemma \ref{vrlemma}, we may use Corollary \ref{thmBp} of Theorem \ref{dcexistthm} to prove that any manifold of the form of Example \ref{expexmp} admits a solution to the Yamabe problem, provided that $\beta < \sqrt{n(n-2)}$.
\end{remark}

\begin{proof}[Proof of Proposition \ref{sharpprop}]
	Let $(M, g)$ be of the form of Example \ref{expexmp} where $\alpha_i$ are chosen such that $\sum_i \alpha_i = \beta$ and
	\begin{equation*}
		S_g \equiv \begin{cases}
		-\frac{n}{n-1}\beta^2 \text{ if } \beta > n-1,\\
		-n(n-1) \text{ if } \beta \leq n - 1.
		\end{cases}
	\end{equation*}
	In particular, $S_g \leq -n(n-1)$. We note that, by the same argument as in the proof of Lemma \ref{vrlemma}, there exists a $C(\alpha_1, \ldots, \alpha_{n-1}) > 0$ such that we may choose concentric annuli $\Omega_2 = A_{2R}$ and $\Omega_1 = A_R$ satisfying
	\begin{equation*}
	\frac{Vol_{g}(A_{2R}\setminus A_R)}{Vol_{g}(A_{R})} \leq C(\alpha_1, \ldots, \alpha_{n-1})e^{\beta R}.
	\end{equation*} 
	
	As the eigenspace corresponding to the first eigenvalue $\lambda$ is necessarily one dimensional, we must have that the first eigenfunctions $\varphi$ of the conformal Laplacian on $A_{2R}$ are radially symmetric (i.e. $\varphi = \varphi(r)$) by virtue of the symmetry of the torus. Consequently, $\varphi$ satisfies the constant coefficient ODE
	\begin{equation*}
	\varphi^{\prime\prime} + \beta\varphi^\prime + \frac{\lambda - S_g}{c_n}\varphi = 0
	\end{equation*}
	on $\Omega_2$ subjected to the zero Dirichlet boundary condition.
	
	It follows that the corresponding characteristic equation must have complex, non-real roots, which implies that
	\begin{equation*}
	\beta^2 - \frac{{(n-2)(\lambda - S_g)}}{(n-1)} < 0.
	\end{equation*}
	Consequently, $\lambda > \frac{n-1}{n-2}\beta^2 - S_g$ and so, recalling our choice of $S_g$, the proposition follows.
\end{proof} 

To conclude, we remark that one may use Theorem \ref{dcexistthm} to show existence of solutions to the Yamabe problem for manifolds obtained from the warped product type metrics discussed above via perturbations which preserve the volume ratio and scalar curvature conditions. 

In a future work, it would be interesting to explore which results similar to those achieved for ALH manifolds could be obtained in the setting of Theorem \ref{dcexistthm} and to understand to what extent the upper bound on the scalar curvature could be loosened.

\end{document}